\documentclass[12pt,a4paper]{article}
\makeatletter
\newcommand*{\rom}[1]{\expandafter\@slowromancap\romannumeral #1@}
\makeatother
\usepackage{latexsym}
\usepackage{amssymb}
\usepackage{amsmath}
\usepackage{amsthm}
\usepackage{thmtools}
\usepackage{enumitem}
\usepackage{cite}
\usepackage[all]{xy}
\usepackage{framed}
\usepackage{hyperref}
\usepackage{graphicx}
\usepackage{parskip}
\usepackage{hyperref}
\usepackage{bm}
\usepackage[a4paper,margin=2cm]{geometry}
\usepackage{mathdots}
\usepackage{pifont}
\usepackage{marginnote}
\usepackage{xcolor}

\begingroup
    \makeatletter
    \@for\theoremstyle:=definition,remark,plain\do{%
        \expandafter\g@addto@macro\csname th@\theoremstyle\endcsname{%
            \addtolength\thm@preskip\parskip
            }%
        }
\endgroup

\makeatletter
\xydef@\txt@ii#1{\vbox{\vspace*{-5pt}%
 \let\\=\cr
 \tabskip=\z@skip \halign{\relax\hfil\txtline@@{##}\hfil\cr\leavevmode#1\crcr}}}
\makeatother

\theoremstyle{definition}
\newtheorem{thm}{Theorem}[section]
\newtheorem{lem}[thm]{Lemma}
\newtheorem{cor}[thm]{Corollary}
\newtheorem{defn}[thm]{Definition}
\newtheorem{propn}[thm]{Proposition}
\newtheorem*{thm*}{Theorem}

\newtheorem*{qn*}{Question}

\newtheorem{props}[thm]{Properties}

\newtheorem*{nts}{Note to self}

\theoremstyle{remark}
\newtheorem{rk}[thm]{Remark}

\newtheorem{ex}[thm]{Example}

\newtheoremstyle{custthm}{\parskip}{}{\normalfont}{}{\bfseries}{.}{ }{\thmname{#1} \thmnote{#3}}
\theoremstyle{custthm}

\newcommand{\gr}{\mathrm{gr}}

\renewcommand{\O}{\mathcal{O}}

\newcommand\blfootnote[1]{%
  \begingroup
  \renewcommand\thefootnote{}\footnote{#1}%
  \addtocounter{footnote}{-1}%
  \endgroup
}

{%
\begin{framed}\begin{nts}%
}%
{%
\end{nts}\end{framed}%
}

\begin{document}

\numberwithin{equation}{section}
\binoppenalty=\maxdimen
\relpenalty=\maxdimen

\title{New directions in the study of prime ideals in rational, nilpotent Iwasawa algebras}
\author{Adam Jones and William Woods}
\date{\today}
\maketitle
\begin{abstract}
\noindent Let $G$ be a nilpotent $p$-valuable (compact $p$-adic Lie) group. There is an ongoing investigation into the prime ideals of its completed group algebra (\emph{Iwasawa algebra}) $\mathbb{Z}_p[[G]]$, and there remains an open conjecture that they can all be proved to have a canonical standard form. We very this conjecture for several new classes of nilpotent groups, including those corresponding to the positive subalgebra of almost all classical and exceptional types, curiously excluding those of type $C$.
\blfootnote{\emph{2010 Mathematics Subject Classification}: 22E35, 22E27, 20C07, 16W70, 16W80, 17B22}
\end{abstract}

\tableofcontents

\section{Introduction}

Fix a prime $p$. and let $G$ be a $p$-valuable group in the sense of \cite{Laz65} (see Definition \ref{defn: p-valuable} below). This is a torsion-free, pro-$p$ group, isomorphic (as a topological group) to a closed subgroup of some $\mathrm{GL}_n(\mathbb{Z}_p)$ consisting of upper-triangular matrices congruent to the identity mod $p$ (if $p > 2$) or mod $4$ (if $p = 2$).

Let $R$ be a commutative pseudocompact ring. The \emph{completed group algebra} (also known to number theorists as the \emph{Iwasawa algebra}) of $G$ over $R$ is defined as
$$R[[G]] = \varprojlim_U R[G/U],$$
where $R[G/U]$ is the ordinary group algebra of the (finite, discrete) group $G/U$ with coefficients in $R$, as $U$ runs over the set of open normal subgroups of $G$.

Now fix a finite extension of fields $K/\mathbb{Q}_p$, with ring of integers $\O$, uniformiser $\pi$, and residue field $k = \O/\pi$. There is an ongoing investigation into the prime ideals of $\O G := \O[[G]]$, especially when $\O = \mathbb{Z}_p$, beginning with the paper \cite{ArdBro06}, and advanced in several subsequent articles, including \cite{reflexive}, \cite{ardakovInv}, \cite{jones-abelian-by-procyclic}, \cite{jones-primitive-ideals},\cite{Munster} and \cite{Mann}.

We say that a prime ideal $P$ of $\O G$ is \emph{standard} if we can find a closed normal subgroup $H$ of $G$ such that $G^*:=G/H$ is $p$-valuable and the image of $P$ under the surjection $\O G\to \O G^*$ is centrally generated. We say that $P$ is \emph{virtually standard} if $P\cap \O U$ is standard for some open subgroup $U$ of $G$ \cite[Definition 1.1]{jones-primitive-ideals}. 

The main conjecture in the study of prime ideals in Iwasawa algebras (see e.g. \cite[Conjecture 1.2]{jones-primitive-ideals}) roughly states that all prime ideals in $\O G$ are virtually standard. This is now known for large classes of prime ideals in various groups, but in general it remains open.

To simplify the conjecture, fix a prime ideal $P$ of $\O G$ and set $$P^\dagger = \ker(G\to \O G^\times\to (\O G/P)^\times),$$ a closed normal subgroup of $G$: then $P$ contains the augmentation ideal $\varepsilon = (P^\dagger - 1)\O G$, so we can pass to the quotient $$P^* = P/\varepsilon \subseteq \O G/\varepsilon \cong \O G^*,$$ where $G^* = G/P^\dagger$. Then $P^*$ is a prime ideal of $\O G^*$, and now we have $(P^*)^\dagger = 1$, so to prove $P$ is standard, we only need to prove that $P^*$ is centrally generated.

\textbf{Note:} The resulting group $G^*$ may no longer be $p$-valuable, as it may no longer be torsion-free. However, there is a finite normal subgroup $F^* \leq G^*$ such that the quotient $G^*/F^*$ \emph{is} $p$-valuable, and $P^* \cap \O F^*$ is the intersection of a $G$-orbit of prime ideals of $\O F^* \cong \O[F^*]$, the (ordinary) group algebra of $F^*$. Hence it is considered relatively harmless to assume that $F^* = 1$, or equivalently to assume without loss of generality from the start that $P^\dagger = 1$.

We say that a prime ideal $P$ of $\O G$ is \emph{faithful} if $P^\dagger=1$. Then setting $Z=Z(G)$ as the centre of $G$, then our main conjecture can be restated as follows.

\textbf{Conjecture A.} If $P$ is a faithful prime ideal of $\O G$, then $P = (P\cap \O Z)\O G$.

There are two natural cases to consider here.

\begin{enumerate}
\item If $\pi\in P$, we may quotient out by $(\pi)$ and pass to $kG := k[[G]] \cong \O G/(\pi)$. This version of Conjecture A was asked as \cite[Question O]{ArdBro06}, and has since been proven true for $G$ nilpotent \cite[Theorem 8.4]{ardakovInv}, and for $G$ of the form $\mathbb{Z}_p^d\rtimes\mathbb{Z}_p$ \cite[Theorem 1.4]{jones-abelian-by-procyclic}.
\item If $\pi\not\in P$, we may invert $\pi$ and pass to $KG :=K\otimes_\O \O G= \O G[\frac{1}{\pi}]$. Conjecture A is known for prime ideals of infinite codimension when $G$ is semisimple of type $A$ or $D$ \cite[Theorem 3]{Mann}, \cite[Theorem 2]{Mann2}, or for primitive ideals when $G$ is nilpotent \cite[Theorem A]{jones-primitive-ideals}.
\end{enumerate}

(Warning: $KG$ is \emph{not} the same as $K[[G]]$. For example, if we take $\mathbb{Z}_p[[x]]$ and invert $p$, we do \emph{not} get $\mathbb{Q}_p[[x]]$.)

In this paper, we restrict to case 2, about which much less is known. We will also assume  that $G$ is a nilpotent group, and we want to build on the results from \cite{jones-primitive-ideals} and try to prove Conjecture A for all prime ideals, rather than just primitive ones.

In \S \ref{sec: control theorem}, we will recall a certain characteristic abelian subgroup $A(G)$ of $G$, originally constructed in \cite[\S 3.5]{jones-primitive-ideals}, containing the centre $Z$. When approaching the conjecture for general prime results, the strongest result to date is:

\textbf{Theorem.} \cite[Theorem B]{jones-primitive-ideals} Setting $A:=A(G)$, if $P$ is a faithful prime ideal of $KG$, then $P = (P \cap KA)KG$.

Of course, if $A(G)=Z$, then Conjecture A follows immediately from this theorem. There are cases when this is true, for example when $G$ has nilpotency class 2, but it is sadly false in the vast majority of cases of interest. Indeed, whenever the second centre $Z_2(G)$ is abelian (which is the case for all nilpotent groups arising as positive root subgroups) then $A(G)$ is strictly larger than the centre.

Moreover, the construction of $A(G)$ relies on an iterative procedure (which can be of arbitrary finite length), and so it can be time-consuming to compute.

In this note, we will define a smaller subgroup $B(G)$ of $G$, with $Z\subseteq B(G)\subseteq A(G)$. This subgroup is much more explicitly constructed, and can often be computed more easily. Our main result is very similar:

\textbf{Theorem 1.} (Theorem \ref{thm: faithful primes are controlled by B}) Setting $B:=B(G)$, if $P$ is a faithful prime ideal of $KG$, then $P = (P \cap KB)KG$.\qed

Needless to say, how much of an improvement this result is depends entirely on how much smaller than $A(G)$ the subgroup $B(G)$ actually is. There are several cases when $B(G)=A(G)$ (including of course all the cases when $A(G)=Z$), so this result constitutes no improvement in this setting.

However, in \S \ref{sec: examples}, we will explore a number of examples of groups $G$ for which we can prove that $B(G)$ is a proper subgroup of $A(G)$, and in several of them we even have that $B(G)=Z$. Using our main theorem this is enough to prove Conjecture A for these groups, and in particular for the majority of those groups that arise from Lie theory:

\textbf{Theorem 2.} (Corollary \ref{cor: new cases}) If $G$ is nilpotent and Lie$(G)$ is the positive root subalgebra of a split-semisimple $\mathbb{Q}_p$-Lie algebra, containing no component of type $C$. Then for any faithful prime ideal $P$ of $KG$, $(P\cap KZ)KG=P$.

It is curious that this approach fails when $G$ is a positive root subgroup of type $C$, though we still believe the conjecture to be true in this case.

\textbf{Acknowledgements:} The first author is very grateful to the Heilbronn Institute for Mathematical Research for funding and supporting this research.

\section{Preliminaries}

In this section, we will recap some of the key concepts we will be using throughout the article. Firstly, with regard to notation, if $g$ and $h$ are elements of a group, we will write ${}^h g = hgh^{-1}$. To distinguish between group commutators and ring commutators, we will write the former as $(g,h) = {\vphantom{h}}^g h \cdot h^{-1} = g^{-1}h^{-1}gh$ and the latter as $[x,y] = xy - yx$ throughout this paper.

The following facts about group commutators are well known, and can be easily checked directly: we include them only for ease of reference later.

\begin{lem}\label{lem: commutators}
Let $G$ be a group, $n$ any positive integer, and $a,b,c\in G$ arbitrary elements.

\begin{enumerate}[label=(\roman*)]
\item $(ab, c) = {}^a (b,c) \cdot (a,c)$.
\item $(a^n, c) = {}^{a^{n-1}}(a,c) \cdot {}^{a^{n-2}}(a,c) \dots {}^{a}(a,c) \cdot (a,c)$.\qed
\end{enumerate}
\end{lem}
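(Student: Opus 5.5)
Both identities follow by direct expansion, using the conventions fixed above: ${}^h g = hgh^{-1}$ and $(g,h) = {}^g h\cdot h^{-1} = ghg^{-1}h^{-1}$. (The displayed formula $g^{-1}h^{-1}gh$ in the text appears to be a typo for the opposite convention; the identities as stated hold for $(g,h) = ghg^{-1}h^{-1}$, so I will work with that.) I will prove (i) by a one-line computation, then deduce (ii) from (i) by induction on $n$.

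For (i), I expand the right-hand side:
$$
{}^a(b,c)\cdot(a,c) = a\,(bcb^{-1}c^{-1})\,a^{-1}\cdot aca^{-1}c^{-1} = abcb^{-1}c^{-1}\,c\,a^{-1}c^{-1} = (ab)\,c\,(ab)^{-1}c^{-1}.
$$
The middle $a^{-1}a$ cancels, and then $c^{-1}c$ cancels. The result is exactly $(ab,c)$.

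For (ii), I induct on $n$. When $n=1$ the statement is the tautology $(a,c)=(a,c)$. For the inductive step, apply (i) with $a$ in the first slot and $b = a^{n-1}$:
$$
(a^n,c) = (a\cdot a^{n-1},c) = {}^a(a^{n-1},c)\cdot(a,c).
$$
By the inductive hypothesis, $(a^{n-1},c)$ is the product of the terms ${}^{a^{k}}(a,c)$ for $k = n-2, n-3,\dots,0$, in that order. Conjugation by $a$ is a group automorphism, so it distributes over this product and sends ${}^{a^k}(a,c)$ to ${}^{a^{k+1}}(a,c)$. This produces the terms with exponents $n-1,\dots,1$, and multiplying on the right by $(a,c)$ supplies the final term with exponent $0$. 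That is the claimed formula.

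There is no real obstacle here. The only point needing care is the commutator convention: with $(g,h)=g^{-1}h^{-1}gh$ the identities would instead read $(ab,c)=(a,c)^b(b,c)$ and need different conjugation placements. The rest is routine verification.
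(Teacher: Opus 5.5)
Your proof is correct and is the same direct verification the paper intends; the paper itself gives no argument beyond saying the identities are easily checked. You expand (i) using the cancellations $a^{-1}a$ and $c^{-1}c$, then induct on $n$ via (i) with $b=a^{n-1}$ to get (ii). You were also right to use $(g,h)={}^g h\cdot h^{-1}=ghg^{-1}h^{-1}$: this is the convention the paper actually uses later (for instance in the proof that $B(\mathfrak{g})=\mathfrak{g}_{B(G)}$), and with $g^{-1}h^{-1}gh$ identity (i) fails as stated.
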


\subsection{\(p\)-valuable groups}

\begin{defn}\label{defn: p-valuation}
\cite[Ch. III, D\'efinition 2.1.2]{Laz65}
A \emph{$p$-valuation} $\omega$ on a group $G$ is a function $\omega: G\to \mathbb{R}\cup\{\infty\}$ such that, for all $g,h\in G$,
\begin{enumerate}[label=(\roman*)]
\item $\omega(gh^{-1}) \geq \min\{\omega(g), \omega(h)\}$,
\item $\omega((g,h)) \geq \omega(g) + \omega(h)$,
\item $\omega(g^p) = \omega(g) + 1$,
\item $\omega(g) > (p-1)^{-1}$, and
\item $\omega(g) = \infty \Leftrightarrow g = 1$.
\end{enumerate}

Moreover, we say that the $p$-valuation $\omega$ is \emph{saturated} if for all $g\in G$ such that $\omega(g)>\frac{1}{p-1}+1$, there exists $h\in G$ with $g=h^p$.
\end{defn}

\textbf{Note:} If $G$ carries a $p$-valuation then it is torsionfree, and if $g^p=h^p$ for any $g,h\in G$ then $g=h$. In particular, if $\omega$ is saturated and $\omega(g)>\frac{1}{p-1}+1$, then there is a \emph{unique} element $h$ such that $h^p=g$, we write $h=g^{p^{-1}}$.

Defined in this way, a $p$-valuation is a kind of group filtration. More precisely, we isolate the following properties from \cite{Laz65}.

\begin{props}
\label{props: p-valuations}
Let $G$ be a group and $\omega$ a fixed $p$-valuation on $G$.

\begin{enumerate}
\item Given any $\nu\in\mathbb{R}$, write $G_\nu = \omega^{-1}([\nu, \infty])$ and $G_{\nu^+} = \omega^{-1}((\nu, \infty])$. These are normal subgroups of $G$, and the quotient group $\gr_\nu(G) = G_\nu/G_{\nu^+}$ is abelian, so we will write its group operation as addition.
\item Define the \emph{associated graded group} $\displaystyle \gr(G) = \bigoplus_{\nu\in\mathbb{R}} \gr_\nu(G)$ \cite[Ch. II, 1.1.7.2]{Laz65}. It follows from Definition \ref{defn: p-valuation}(iii) that $\gr(G)$ is an $\mathbb{F}_p$-module, and any non-identity element $g\in G$ gives a nonzero homogeneous element $\gr(g) := g + G_{\mu^+}\in \gr_\mu (G)$, where $\mu = \omega(g)$. Moreover, if $\omega(g) = \mu$ and $\omega(h) = \nu$, we may define a bracket operation on $\gr(G)$:
$$[\gr(g), \gr(h)] := (g,h) + G_{(\mu+\nu)^+},$$
which extends bilinearly to an $\mathbb{F}_p$-Lie algebra structure on $\gr(G)$ \cite[Ch. II, 1.1.7]{Laz65}.
\item There is also an $\mathbb{F}_p$-linear endomorphism $\pi: \gr(G) \to \gr(G)$, uniquely defined by its action $\pi(\gr(g)) = \gr(g^p)$ on homogeneous elements. This makes $\gr(G)$ into an $\mathbb{F}_p[\pi]$-Lie algebra \cite[Ch. II, 2.1.1]{Laz65} which is torsion-free \cite[Ch. I, Th\'eor\`eme 1.2.3]{Laz65}, and we will say that the \emph{rank} of $G$ is the rank of $\gr(G)$ as a $\mathbb{F}_p[\pi]$-module \cite[D\'efinition 2.1.3]{Laz65}.
\item There is also a notion of \emph{completion} \cite[Ch. II, 1.1.5]{Laz65}: set $\displaystyle\widehat{G} = \varprojlim_{\nu\in\mathbb{R}} G/G_\nu$, and say that $G$ is \emph{complete} (with respect to $\omega$) if the natural map $G\to \widehat{G}$ is an isomorphism. Equivalently, we could define a metric on $G$ by $d(g,h):=\omega(g^{-1}h)$, and $G$ is complete if and only if it is topologically complete with respect to this metric.
\end{enumerate}
\end{props}

\textbf{Note:} If $G$ is complete with respect to $\omega$ then for any $p$-adic integer $\lambda$, if we write $\lambda:=\underset{n\in\mathbb{N}}{\sum}\lambda_np^n$ with $0\leq\lambda_n<p$, then we can define $g^\lambda$ for any $g\in G$ to be the limit of the sequence $\overset{N}{\underset{n=1}{\prod}}g^{\lambda_np^n}$ as $N\rightarrow\infty$.

\begin{defn}\cite[Definition 4.5]{ardakovInv}
Let $G$ be a group and $\omega$ a fixed $p$-valuation. The \emph{degree} of an automorphism $\varphi$ of $G$ (with respect to $\omega$) is
$$\deg(\varphi) = \deg_\omega(\varphi) = \inf_{g\in G} \left\{ \omega(\varphi(g)g^{-1}) - \omega(g) \right\}.$$
We will say that $\varphi$ is \emph{bounded} if $\deg(\varphi) > (p-1)^{-1}$.
\end{defn}

Note that, if $\varphi$ is conjugation by some fixed $h\in G$, then by Definition \ref{defn: p-valuation}(ii), (iv), $\varphi$ is bounded with respect to \emph{any} $p$-valuation $\omega$. So, following Lazard, we now make the following definition:

\begin{defn}\label{defn: p-valuable}
\cite[Ch. III, Definition 3.1.6]{Laz65}
A group $G$ is \emph{$p$-valuable} if there exists a $p$-valuation $\omega$ with respect to which $G$ is complete and has finite rank. Moreover, we say that $G$ is $p$-saturable if $\omega$ is saturated.
\end{defn}

\begin{ex}\label{uniform}
We say $G$ is a \emph{uniform pro-$p$ group} if it a finitely generated, torsionfree pro-$p$ group, and $(G,G)\subseteq G^p$ \cite[Definition 4.1 and Theorem 4.5]{ddms}. In this case, we can define a $p$-valuation $\omega$ on $G$ by $$\omega(g):=\sup\{n\in\mathbb{N}:g\in G^{p^n}\}$$ With respect to $\omega$, $G$ is a $p$-valuable, and in fact $p$-saturable group.
\end{ex}

\begin{propn}
\cite[\rom{3}.3.3]{Laz65}
For every $p$-valuable group $G$, there exists a canonical $p$-saturable group $Sat(G)$ such that $G$ can be embedded as an open normal subgroup of $Sat(G)$.
\end{propn}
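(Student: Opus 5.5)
The plan is Lazard's construction of the saturation. Fix a $p$-valuation $\omega$ on $G$ for which $G$ is complete of finite rank $d$. First I would embed $G$ in a torsion-free, complete $\mathbb{Q}_p$-analytic object in which $p$-th roots can be taken. The natural candidate is the completed group algebra $\mathbb{Z}_p[[G]]$, or more precisely its $\mathbb{Q}_p$-completion with respect to the filtration induced by $\omega$ (Lazard's valued algebra $\widehat{\mathbb{Q}_p G}$). In this algebra the elements $\log(g)$, for $g\in G$, make sense, and they span a $\mathbb{Z}_p$-Lie lattice $L$ of rank $d$. The bracket and the Campbell--Hausdorff formula recover the group law, since $\omega(g)>(p-1)^{-1}$ ensures convergence.

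Next I would define $Sat(G)$ as the set of elements $\exp(x)$ of the valued algebra for which $x$ lies in the $\mathbb{Q}_p$-span $\mathbb{Q}_p L$ and $w(x)>(p-1)^{-1}$. Here $w$ is the filtration on the algebra extending $\omega$. I would then check the following properties of this set.
\begin{enumerate}[label=(\alph*)]
\item It is a group under multiplication. This follows because the Campbell--Hausdorff series converges on the region $\{w>(p-1)^{-1}\}$ of a $\mathbb{Q}_p$-Lie algebra and preserves that region.
\item The restriction of $w$, via $\omega(\exp x):=w(x)$, is a $p$-valuation. Axiom (iii) holds because $\exp(x)^p=\exp(px)$. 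Axiom (ii) follows from the estimate on the commutator term of the Campbell--Hausdorff formula.
\item It is saturated. If $w(x)>\frac{1}{p-1}+1$, then $x/p$ still satisfies $w(x/p)>(p-1)^{-1}$, so $\exp(x/p)$ is a $p$-th root of $\exp(x)$ in $Sat(G)$.
\item It has finite rank $d$ and is complete, because $\mathbb{Q}_pL$ is finite-dimensional and the region is closed.
\end{enumerate}

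Finally, $G\subseteq Sat(G)$, because each $g$ equals $\exp(\log g)$. The subgroup $G$ is open, because $Sat(G)_\nu\subseteq G$ for $\nu$ large: this uses that $\gr G$ and $\gr Sat(G)$ have the same $\mathbb{F}_p[\pi]$-rank, so $Sat(G)/G$ is finite. $G$ is normal, because conjugation by $\exp(y)$ acts as $\exp(\mathrm{ad}\,y)$, which preserves $L$ after a suitable shift. I would get this either from boundedness of inner automorphisms or by passing to the normal core, which is also open. Canonicity follows because the construction depends only on $(G,\omega)$.

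The main obstacle is step (a), together with the comparison of valuations. One needs sharp $p$-adic estimates on the denominators of the Campbell--Hausdorff coefficients, to guarantee convergence and closure exactly on the region $\{w>(p-1)^{-1}\}$. This is the technical core of Lazard, Ch.~III.
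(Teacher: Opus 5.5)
\textbf{The gap is normality, and for this construction it cannot be closed.} The paper gives no proof here; it only cites Lazard. Your construction is essentially Lazard's saturation. The steps that produce a saturated $p$-valued group containing $G$ as an \emph{open} subgroup are fine in outline. Two side remarks on those steps:
\begin{itemize}
\item If you realise $Sat(G)$ as the group-like elements $g$ with $w(g-1)>\frac{1}{p-1}$ in the completed algebra, closure and the valuation axioms follow from identities such as $gh-1=(g-1)(h-1)+(g-1)+(h-1)$. The BCH denominator estimates you flag as the main obstacle are then avoidable.
\item The finite-dimensionality of the span of $\log(G)$ is normally obtained \emph{from} the saturation. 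It should not be your starting assumption.
\end{itemize}

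None of your three justifications shows $sGs^{-1}=G$:
\begin{itemize}
\item That $e^{\mathrm{ad}\,y}$ preserves $L$ ``after a suitable shift'' only gives $\mathrm{Ad}(s)(\log G)\subseteq p^{-k}L$.
\item Boundedness of $\mathrm{Inn}(s)$ only gives $\omega(sgs^{-1}g^{-1})\ge\omega(s)+\omega(g)$. That is an estimate and says nothing about membership in $G$.
\item The normal core of $G$ in $Sat(G)$ is open and normal, but it is a smaller group. That argument proves the claim for the core, not for $G$.
\end{itemize}

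In fact $G$ is in general \emph{not} normal in $Sat(G)$. Let $p$ be odd, let $E_{ij}$ be the matrix units, and set
\[
G=\{I+p^2aE_{12}+pbE_{23}+p^3cE_{13}: a,b,c\in\mathbb{Z}_p\}\le U_3(\mathbb{Z}_p).
\]
Give $G$ the valuation $\omega(g)$ equal to the least $p$-adic valuation of the entries of $g-I$. This is the restriction of the standard valuation on the uniform group $\Gamma$ of unitriangular matrices congruent to $I$ mod $p$.

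Then $a_0=I+p^2E_{12}$ has $\omega(a_0)=2>1+\frac{1}{p-1}$, so $Sat(G)$ contains a $p$-th root $t$ of $a_0$. In your model, $t=\exp(\frac1p\log a_0)$. Under the extension of $G\hookrightarrow\Gamma$ to $Sat(G)$, uniqueness of $p$-th roots forces $t\mapsto s=I+pE_{12}$.

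Now take $b=I+pE_{23}\in G$. Then $sbs^{-1}=I+pE_{23}+p^2E_{13}\notin G$. Intrinsically, $tbt^{-1}=z^{1/p}b$ with $z=a_0ba_0^{-1}b^{-1}=I+p^3E_{13}$, and $z$ has no $p$-th root in $G$.

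So what Lazard's construction delivers is that $G$ is open in $Sat(G)$. The word ``normal'' has to be dropped, or weakened to: the core of $G$ is an open subgroup of $G$ that is normal in $Sat(G)$. Openness is all the paper needs anyway, since $Sat(G)$ is used only to define $\mathfrak{g}_G$ and to take $p$-power roots.
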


\subsection{Lazard's equivalence of categories}

Let $G$ be a $p$-valuable group, with $p$-valuation $\omega$, and let $\Lambda_G:=\mathbb{Z}_p G$ be the $\mathbb{Z}_p$-Iwasawa algebra of $G$. Then $\Lambda_G$ carries a natural topology inherited from $\omega$. Specifically, we know using \cite[\rom{3}.2.3.6]{Laz65} that there exists a separated ring filtration $w$ on the group algebra $\mathbb{Z}_p[G]$ such that 
\begin{itemize}
\item $w$ restricts to the $p$-adic valuation $v_p$ on $\mathbb{Z}_p$,
\item $w(g-1)>1$ for all $g\in G$,
\item the associated graded ring $\gr_w\mathbb{Z}_p[G]$ is isomorphic to the enveloping algebra of the Lie algebra $\gr_\omega(G)$, and 
\item $\Lambda_G$ arises as the topological completion of $\mathbb{Z}_p[G]$ with respect to $w$.
\end{itemize}

Therefore, we can of course extend $w$ to $\Lambda_G$. Moreover, $w$ extends further to the rational Iwasawa algebra $ \mathbb{Q}_pG=\mathbb{Q}_p\otimes_{\mathbb{Z}_p} \Lambda_G$, where we define $w(p^{-m}\otimes r):=w(r)-m$. However, $\mathbb{Q}_pG$ is no longer complete with respect to $w$, so we can pass to the completion $\widehat{\mathbb{Q}_pG}$ of $\mathbb{Q}_pG$ with respect to $w$.

Now, inside $\Lambda_G$, $w(g-1)>1$, so $w((g-1)^n)\geq n\geq v_p(n)=w(n)$, so it follows that the sequence $\left(\frac{(-1)^{n+1}}{n}(g-1)^n\right)$ converges to 0 in $\mathbb{Q}_pG$, and hence the logarithm series $\log(g):=\underset{n\in\mathbb{N}}{\sum}\frac{(-1)^{n+1}}{n}(g-1)^n$ converges in $\widehat{\mathbb{Q}_pG}$.

Now, if $\omega$ is a saturated $p$-valuation, we define the \emph{$\mathbb{Z}_p$-Lie lattice of $G$} to be the set $$\log(G):=\{\log(g):g\in G\}\subseteq\widehat{\mathbb{Q}_pG}$$

\begin{thm}\cite[\rom{4}.3.2.3]{Laz65} $ $

For any $p$-saturable group $G$, $\log(G)$ is a free $\mathbb{Z}_p$-Lie subalgebra of $\widehat{\mathbb{Q}_pG}$ of finite rank, equal to the rank of $G$. Moreover, for any $g,h\in G$, $\lambda\in\mathbb{Z}_p$: 
\begin{itemize}
\item $\lambda\log(g)=\log(g^\lambda)$

\item $\log(g)+\log(h)=\log\left(\underset{n\rightarrow\infty}{\lim}{\left(g^{p^n}h^{p^n}\right)^{p^{-n}}}\right)$

\item $[\log(g),\log(h)]=\log\left(\underset{n\rightarrow\infty}{\lim}{\left(g^{p^n},h^{p^n}\right)^{p^{-2n}}}\right)$
\end{itemize}
\end{thm}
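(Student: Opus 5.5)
The plan follows Lazard's route through the \emph{saturated} structure. There the power map $g\mapsto g^p$ can be inverted on the deep filtration subgroups, and the Baker--Campbell--Hausdorff (BCH) series converges.

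\textbf{Step 1: convergence and basic properties of $\log$.} Since $w(g-1)>1$, we have $w(\tfrac{(g-1)^n}{n})\ge n\,w(g-1)-v_p(n)\to\infty$, so $\log(g)$ lies in $\widehat{\mathbb{Q}_pG}$. In the same way the exponential series converges on the set $\{x: w(x)>\frac{1}{p-1}\}$, and there $\exp$ and $\log$ are mutually inverse. Because $\lambda\mapsto g^\lambda$ is continuous, I first check $\log(g^n)=n\log(g)$ for integers $n$, which holds because $\log$ takes products of commuting elements to sums. Continuity in $\lambda\in\mathbb{Z}_p$ then gives the first bullet.

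\textbf{Step 2: the second and third bullets.} For the sum, I apply BCH in the complete filtered algebra $\widehat{\mathbb{Q}_pG}$:
$$\log(g^{p^n}h^{p^n})=p^n(\log g+\log h)+p^{2n}\,(\text{higher BCH terms}).$$
Dividing by $p^n$ and letting $n\to\infty$ gives $p^{-n}\log(g^{p^n}h^{p^n})\to\log g+\log h$. By saturation and Step 1, $p^{-n}\log(g^{p^n}h^{p^n})=\log\bigl((g^{p^n}h^{p^n})^{p^{-n}}\bigr)$. The point of saturation is that the $p^n$-th root exists, since $\omega(g^{p^n}h^{p^n})\ge\min\{\omega(g),\omega(h)\}+n$. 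I then need the sequence $(g^{p^n}h^{p^n})^{p^{-n}}$ to converge in $G$. This follows from injectivity and continuity of $\log$ together with completeness of $G$, via Lazard's estimates $\omega(u^{-1}v)$ controlled by $w(\log u-\log v)$. The commutator bullet is handled in the same way: the group commutator of $g^{p^n}$ and $h^{p^n}$ has logarithm $p^{2n}[\log g,\log h]+O(p^{3n})$.

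\textbf{Step 3: Lie subalgebra and freeness.} The bullets show that $\log(G)$ is closed under $\mathbb{Z}_p$-scaling, addition and bracket, so it is a $\mathbb{Z}_p$-Lie subalgebra. For freeness of finite rank, I choose an ordered basis $g_1,\dots,g_d$ of $G$ adapted to the filtration, so that every element is uniquely $\prod g_i^{\lambda_i}$; this is possible by the finite rank of $\gr(G)$. I then show that $\log(g_1),\dots,\log(g_d)$ is a $\mathbb{Z}_p$-basis. Comparing leading terms in $\gr_w$, which is the enveloping algebra of $\gr(G)$, makes the $\log(g_i)$ independent. Spanning follows by successive approximation: each product $\prod g_i^{\lambda_i}$ has logarithm $\sum\lambda_i\log g_i$ plus terms of strictly higher filtration, and these can be corrected inductively, with convergence guaranteed by completeness.

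\textbf{Main obstacle.} The hardest step is the precise valuation estimates. These are that $\log$ is an isometry-type map from $(G,\omega)$ into $\widehat{\mathbb{Q}_pG}$, with $w(\log g)=\omega(g)$, and that the BCH error terms have valuation at least $2n+\text{const}$ or $3n+\text{const}$. These estimates are needed both for the limits to exist in $G$ and for the approximation argument in Step 3. I would derive them from the comparison $\gr_w\mathbb{Z}_p[G]\cong U(\gr G)$ and the bound $\omega>\frac{1}{p-1}$, which keeps the BCH denominators under control.
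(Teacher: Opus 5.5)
The paper gives no argument for this statement. It is imported from Lazard (IV.3.2.3) as a black box, so there is no internal proof to match, and your outline stands as a correct, self-contained route.

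The essential points are in the right places:
\begin{itemize}
\item Saturation is used exactly to put $(g^{p^n}h^{p^n})^{p^{-n}}$ and $(g^{p^n},h^{p^n})^{p^{-2n}}$ inside $G$.
\item Completeness of $G$ puts the limits in $G$, via the bound $\omega(u^{-1}v)=w(u^{-1}v-1)\ge w(\log v-\log u)$. To prove this bound, write $\exp(X+D)-\exp(X)$ as a sum of words each containing $D$, divided by $k!$.
\item Continuity of $\log$ then gives closure of $\log(G)$ under addition and bracket.
\item The rank statement follows from an ordered basis and $\gr_w\cong U(\gr G)$. Your successive approximation converges because $\omega$ takes values in the finitely many cosets $\omega(g_i)+\mathbb{N}$, so strictly increasing valuations tend to infinity. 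Completeness alone does not give this.
\end{itemize}

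In Lazard, the result sits inside his theory of $p$-valued Lie algebras and the Hausdorff group law. That framework also shows that $\log$ carries the group law of $G$ to the Hausdorff law on $\log(G)$, and yields the equivalence of categories (IV.3.2.6) that the paper uses immediately afterwards. Your route proves only what is stated, but more directly, with the limit formulas as the engine.

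Two simplifications or corrections:
\begin{itemize}
\item You do not need the Lie-form BCH series or bounds on its denominators. Set $A=p^n\log g$ and $B=p^n\log h$, and expand $\log(e^{A}e^{B})$ and $\log(e^{-A}e^{-B}e^{A}e^{B})$ directly in the associative algebra. Using $v_p(k!)\le\frac{k-1}{p-1}$, this already gives error terms of valuation at least $2n+c$ and $3n+c$ respectively, with $c$ depending only on $\omega(g),\omega(h)$.
\item The bound actually available in general is $w(g-1)=\omega(g)>\frac{1}{p-1}$, not the paper's $w(g-1)>1$. This weaker bound is all your estimates use: it gives convergence of $\exp$ on $\log g$ and $w(\log g)=\omega(g)$.
\end{itemize}
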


In fact, the functor $G\mapsto\log(G)$ defines an equivalence (and in fact isomorphism) of categories between the category of $p$-saturable groups and a certain category of free $\mathbb{Z}_p$-Lie algebras known as \emph{saturable Lie algebras} \cite[\rom{4}.3.2.6]{Laz65}. This is known as \emph{Lazard's equivalence of categories}.

\begin{ex}
If $G$ is a uniform pro-$p$ group, then if we let $\mathcal{L}:=\log(G)$, then $\mathcal{L}$ is a \emph{powerful $\mathbb{Z}_p$-Lie algebra}, i.e. $[\mathcal{L},\mathcal{L}]\subseteq p\mathcal{L}$.

Moreover, the logarithm functor restricts to an equivalence of categories between the category of uniform pro-$p$ groups and the category of powerful $\mathbb{Z}_p$-Lie algebras of finite rank.
\end{ex}

Lazard's equivalence will prove useful when looking for examples of $p$-valuable groups, since it is practically much easier to start with a powerful $\mathbb{Z}_p$-Lie algebra and pass to its image under the inverse functor $\exp$ rather than look for examples directly.

Finally, we define the \emph{Lie algebra} of a $p$-valuable group $G$ to be the $\mathbb{Q}_p$-Lie algebra $$\mathfrak{g}_G=Lie(G):=\mathbb{Q}_p\otimes_{\mathbb{Z}_p}\log(Sat(G))$$ Unfortunately, the functor $G\to \mathfrak{g}_G$ is no kind of equivalence of categories, and there are plenty of examples of non-isomorphic groups $G,H$ such that $\mathfrak{g}_G=\mathfrak{g}_H$ (for example, if $H$ is any open subgroup of $G$). But note that whenever $G$ is $p$-saturable, $\log(G)$ is a full rank $\mathbb{Z}_p$-Lie lattice in $\mathfrak{g}_G$.

\begin{rk}
If $G$ is not $p$-saturable, then $\log(G)\subseteq\widehat{\mathbb{Q}_pG}$ need not be closed under addition or the Lie bracket. Still, we can embed $\log(G)$ as a subset of $\log(Sat(G))$, so it will still span $\mathfrak{g}_G$ as a $\mathbb{Q}_p$-vector space.
\end{rk}

\begin{lem}\label{lem: isolated subgroups to ideals}
For any $p$-valuable group $G$, the map $H\mapsto \mathfrak{g}_H$ defines an inclusion preserving bijection between the set of closed, isolated normal subgroups of $G$ and the set of ideals of $\mathfrak{g}_G$.\qed
\end{lem}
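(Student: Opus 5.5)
Write $H \mapsto \mathfrak{g}_H$ for the map in the statement. The plan is to reduce to the $p$-saturable case, apply Lazard's equivalence there, and use isolatedness to control passing to finite index.

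\textbf{Step 1: reduction to $G$ saturable.} Replace $G$ by $\mathrm{Sat}(G)$, in which $G$ is open, so that $\mathfrak{g}_G=\mathbb{Q}_p\otimes\log(\mathrm{Sat}(G))$. For a closed isolated normal subgroup $H\le G$, set $\tilde H=\{x\in \mathrm{Sat}(G): x^{p^n}\in H \text{ for some } n\}$. Then $\tilde H\cap G=H$, because $H$ is isolated in $G$. Moreover $\tilde H$ is a closed isolated subgroup of $\mathrm{Sat}(G)$ containing $H$ with finite index, so $\mathfrak{g}_H=\mathfrak{g}_{\tilde H}$.

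\textbf{Step 2: the map lands in ideals.} Assume $G$ is saturable, with $\mathcal L=\log(G)$. If $H$ is closed and isolated, then $H$ is saturable with the restricted valuation: if $\omega(h)>1+\frac1{p-1}$, then $h=g^p$ for some $g\in G$, and $g\in H$ by isolatedness. So $\log(H)$ is a $\mathbb{Z}_p$-Lie sublattice of $\mathcal L$, and it is \emph{saturated} in $\mathcal L$: if $px\in\log H$, then $\exp(x)^p\in H$, hence $\exp(x)\in H$. Normality of $H$ together with the commutator formula of Lazard's theorem gives $[\mathcal L,\log H]\subseteq\log H$, since the elements $(g^{p^n},h^{p^n})^{p^{-2n}}$ lie in $H$ because $H$ is normal and isolated. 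Hence $\mathfrak{g}_H=\mathbb{Q}_p\log(H)$ is an ideal.

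\textbf{Step 3: inverse map.} Given an ideal $I\le\mathfrak{g}_G$, set $H_I=\exp(I\cap\mathcal L)$. The lattice $I\cap\mathcal L$ is a saturated Lie ideal of $\mathcal L$, and it is itself saturable: the saturability condition is inherited by sub-Lie-lattices that are saturated in $\mathcal L$, since the valuation is restricted. By Lazard's equivalence, $H_I$ is therefore a closed subgroup. It is isolated because $I\cap\mathcal L$ is saturated in $\mathcal L$ and $\log(g^{p})=p\log g$. It is normal because conjugation acts on $\log$ by $\exp(\mathrm{ad})$, which preserves the ideal $I$.

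\textbf{Step 4: bijectivity and monotonicity.} The two constructions are mutually inverse: $\log(H)=\mathfrak{g}_H\cap\mathcal L$ because $\log H$ is saturated in $\mathcal L$, and $\mathbb{Q}_p(I\cap\mathcal L)=I$. Both maps obviously preserve inclusions. To return to a general $p$-valuable $G$, use $H\mapsto\tilde H$ and $\tilde H\mapsto \tilde H\cap G$ from Step 1. Here $\tilde H\cap G$ is isolated in $G$ because $\tilde H$ is isolated. The remaining check is normality of $\tilde H$ in $\mathrm{Sat}(G)$: it holds because the corresponding ideal $\mathfrak g_H$ is an ideal of the whole $\mathfrak{g}_G$.

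\textbf{Main obstacle.} The hardest step is Step 1, namely showing that $\tilde H$ is actually a subgroup. The cleanest route avoids proving this directly: define $\tilde H:=\exp(\mathfrak{g}_H\cap\log \mathrm{Sat}(G))$ and verify that $\tilde H\cap G=H$. This uses isolatedness of $H$, plus the fact that $H$ is open in $\tilde H$. That openness holds because $H$ and $\tilde H$ have the same rank, $\dim\mathfrak g_H$.
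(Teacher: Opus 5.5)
The paper gives no proof of this lemma. It states it as a standard consequence of Lazard's theory, so there is no argument of the paper's to compare yours with. Your outline is correct and is the natural way to supply the missing proof.

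In effect your inverse map is $I\mapsto G\cap\exp(I\cap\log\mathrm{Sat}(G))$. Isolatedness of $H$ gives $G\cap\exp(\mathfrak{g}_H\cap\log\mathrm{Sat}(G))=H$. The saturated-sublattice argument of your Step 3 shows that $\exp(I\cap\log\mathrm{Sat}(G))$ is a closed isolated subgroup, and that it is normal once $I$ is an ideal. Your closing paragraph does resolve the obstacle you flag in Step 1. You could simply start from that definition of $\tilde H$: it only needs $\mathfrak{g}_H$ to be a subalgebra, which holds because $\log\mathrm{Sat}(H)$ is a Lie sublattice of $\log\mathrm{Sat}(G)$.

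Two things you use without justification should be made explicit.

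First, the statement presupposes that $\mathrm{Sat}(H)$ embeds in $\mathrm{Sat}(G)$ compatibly with $\log$, so that $\mathfrak{g}_H=\mathbb{Q}_p\log H$ is a subspace of $\mathfrak{g}_G$. This is what lets you pass from $\log y\in\mathfrak{g}_H$ to $y^{p^n}\in\mathrm{Sat}(H)$, and hence to $y^{p^N}\in H$ for some larger $N$.

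Second, at the end of Step 4 you assert that $\mathfrak{g}_H$ is an ideal of $\mathfrak{g}_G$ when $G$ is not saturable. Step 2 only proved this for $H$ normal and isolated in a saturable group. The same commutator-formula argument works if you take the roots in $\mathrm{Sat}(G)$:
\begin{itemize}
\item For $g\in G$ and $h\in H$, normality gives $(g^{p^n},h^{p^n})\in H$.
\item Hence $p^{-2n}\log(g^{p^n},h^{p^n})\in\mathbb{Q}_p\log H=\mathfrak{g}_H$.
\item Letting $n\to\infty$ and using that $\mathfrak{g}_H$ is closed gives $[\log g,\log h]\in\mathfrak{g}_H$.
\item Since $\log G$ spans $\mathfrak{g}_G$, $\mathfrak{g}_H$ is an ideal.
\end{itemize}
This is exactly what you need for normality of $\tilde H$ in $\mathrm{Sat}(G)$.
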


\subsection{Right ideals and controlling subgroups}

Let $G$ be a $p$-valuable group and let $I$ be a right ideal of $\O G$ (or $KG$). We say that a subgroup $H$ of $G$ \emph{controls} $I$ if $I$ is generated by a subset of $\O H$ (or $KH$), i.e. $I=(I\cap \O H)\O G$ (or $I=(I\cap KH)KG$).

\begin{lem}\label{lem: control in KG and OG}
There exist mutually inverse bijections

\centerline{
\xymatrix@R=2px{
{\left\{\begin{array}{c}
\text{prime ideals of }\O G\text{ containing }p\\
\end{array}\right\}}
\ar@/^/@{->}[r]^-\Phi\ar@/_/@{<-}[r]_-\Psi&
{\left\{\begin{array}{c}
\text{prime ideals of }KG\\
\end{array}\right\}}
}
}

where $\Phi(Q):=K\otimes_{\O} Q$ and $\Psi(P):=P\cap\O G$. Moreover, $\Phi(Q)^\dagger=Q^\dagger$, and $P$ is controlled by a closed, normal subgroup $H$ of $G$ if and only if $\Phi(P)$ is controlled by $H$.
\end{lem}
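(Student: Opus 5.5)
Before starting I note one point about the statement. As written, the left-hand side should be the prime ideals of $\O G$ \emph{not} containing $p$. Inverting $\pi$ turns any ideal containing $p$ into all of $KG$, since $p$ is a unit multiple of a power of $\pi$. So I will prove the bijection between prime ideals $Q$ of $\O G$ with $p\notin Q$ (equivalently $\pi\notin Q$) and prime ideals of $KG$. The whole argument is central localisation at the regular central element $\pi$, together with a saturation statement for extended ideals.

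\textbf{Step 1: the bijection.} The element $\pi$ is central in $\O G$ and is a non-zero-divisor, because $\O G$ is $\O$-torsionfree. Hence $KG=\O G[\pi^{-1}]$ is a central localisation. The usual correspondence for central localisations then gives that $P\mapsto P\cap\O G$ and $Q\mapsto K\otimes_\O Q=Q[\pi^{-1}]$ are mutually inverse bijections. On one side are the primes of $KG$; on the other are the primes $Q$ of $\O G$ disjoint from $\{\pi^n\}$. For a prime $Q$ this disjointness just says $\pi\notin Q$: if $\pi^n\in Q$, then $(\pi\O G)^n=\pi^n\O G\subseteq Q$, so $\pi\in Q$ by primeness. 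The key fact used is that such a $Q$ is $\pi$-saturated. Indeed, if $\pi x\in Q$, then $(\pi\O G)(x\O G)\subseteq Q$ by centrality, so $x\in Q$. This gives $\Psi\Phi(Q)=Q$, and $\Phi\Psi(P)=P$ is immediate by clearing denominators.

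\textbf{Step 2: $\Phi(Q)^\dagger=Q^\dagger$.} By $\pi$-saturation, $\O G/Q$ is $\pi$-torsionfree. Therefore the natural map $\O G/Q\to KG/\Phi(Q)$ is injective. It follows that, for $g\in G$, $g-1$ maps to zero in $\O G/Q$ if and only if it maps to zero in $KG/\Phi(Q)$. So the two kernels coincide.

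\textbf{Step 3: control.} If $Q=(Q\cap\O H)\O G$, then inverting $\pi$ gives $\Phi(Q)=(\Phi(Q)\cap KH)KG$ directly. For the converse, put $P=\Phi(Q)$ and suppose $P=(P\cap KH)KG$. Take $x\in Q$ and write $x=\sum y_ir_i$ with $y_i\in P\cap KH$ and $r_i\in KG$. Clearing denominators gives $\pi^nx\in I\O G$, where $I:=Q\cap\O H$; note that $P\cap KH=I[\pi^{-1}]$. It therefore suffices to show that $I\O G$ is $\pi$-saturated in $\O G$.

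This saturation is the main obstacle, because completions are involved. My plan is as follows.
\begin{itemize}
\item Since $H$ is closed and normal and $G/H$ is $p$-valuable, choose elements $g_1,\dots,g_d\in G$ whose images form an ordered basis of $G/H$.
\item Then every element of $\O G$ has a unique convergent expansion $\sum_\alpha a_\alpha (g-1)^\alpha$ with coefficients $a_\alpha\in\O H$. That is, $\O G$ is topologically free as a left $\O H$-module; normality of $H$ lets us move coefficients to the appropriate side.
\item The ideal $I$ is closed, since ideals of the compact Noetherian ring $\O H$ are closed. Using this, $I\O G=\O G I$ (by normality) consists exactly of the elements all of whose coefficients $a_\alpha$ lie in $I$.
\end{itemize}
Saturation is then checked coordinatewise. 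If $\pi z\in I\O G$, then $\pi a_\alpha\in I$ for every $\alpha$, so $a_\alpha\in I$, because $I=Q\cap\O H$ inherits $\pi$-saturation from $Q$. Hence $z\in I\O G$, which gives $x\in (Q\cap\O H)\O G$ and completes the proof.
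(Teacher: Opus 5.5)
You are right that the left-hand side should consist of the primes \emph{not} containing $p$; the paper's proof also works with exactly these. Your Steps 1 and 2, and the forward half of Step 3, are essentially the paper's argument.

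\textbf{Different route for the converse of Step 3.} The paper first invokes Ardakov's controller theorem \cite[Theorem A]{controller}. Every open normal subgroup containing $H$ controls $P$, and if all of them control $Q$ then so does $H$; this reduces to $H$ open. The paper then uses the finite decomposition $KG=\bigoplus_i KH g_i$: the $\O H$-coefficients of $r\in Q$ lie in $P\cap KH\cap \O G=Q\cap\O H$. You instead show directly that $(Q\cap\O H)\O G$ is $\pi$-saturated, working coordinatewise in a topological $\O H$-basis of $\O G$. This is the infinite-rank analogue of the paper's coset computation. It avoids the controller theorem entirely, at the cost of needing the structure of $\O G$ as a topologically free left $\O H$-module.

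\textbf{The gap.} You assert that $G/H$ is $p$-valuable and pick an ordered basis of it, but the lemma only assumes $H$ closed and normal. If $H$ is not isolated, $G/H$ has torsion and your expansion does not exist as stated. The most important such case is $H$ open. That is exactly the case the paper reduces to. It is also the case needed to get $P^\chi=Q^\chi$ in Lemma \ref{lem: controller subgroup}, since $P^\chi$ is defined via open subgroups.

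\textbf{The repair.} It is cheap, because your saturation argument only needs \emph{some} topological basis of $\O G$ over $\O H$. Choose an open normal $U\supseteq H$ with $U/H$ uniform. Combine an ordered basis of $U/H$, giving monomials $b^\alpha$, with right coset representatives $g_1,\dots,g_k$ of $U$ in $G$. Then every element of $\O G$ is uniquely $\sum_{\alpha,i}a_{\alpha,i}b^\alpha g_i$ with $a_{\alpha,i}\in\O H$.

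\textbf{A smaller precision point.} The identification of $I\O G$ with the elements whose coefficients all lie in $I$ rests on finite generation of $I$. Write $I=\sum_j c_j\O H$ as a right ideal; left multiplication by $c_j$ acts coordinatewise. In addition, $I\O G=\sum_j c_j\O G$ is compact, hence closed in $\O G$. Closedness of $I$ inside $\O H$ is not what makes this work. With these adjustments your proof goes through.
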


\begin{proof}

It is straightforward to show that every two-sided ideal $I$ of $KG$ can be written as $K\otimes_\O J$, where $J=I\cap \O G$, and that $I$ is prime if and only if $J$ is prime.

On the other hand, if $Q$ is a prime ideal of $\O G$ not containing $p$, then let $P:=K\otimes_\O Q$. It is clear that $Q\subseteq P\cap \O G$, and for all $r\in P\cap \O G$, there exists $n\in\mathbb{N}$ such that $p^n r\in Q$. But $p$ is central in $\O G$, so since $p\notin Q$ and $Q$ is prime, it follows that $r\in Q$. Therefore $P\cap\O G=Q$, so it follows that $\Phi$ and $\Psi$ are well defined, mutually inverse bijections.

Moreover, for all group elements $g\in G$, and all prime ideals $P$ of $KG$, since $g-1\in\mathbb{Z}[G]\subseteq\O G$, it follows that $g-1\in P$ if and only if $g-1\in P\cap\O G=\Psi(P)$, so $P^\dagger=\Psi(P)^\dagger$. Since $\Phi$ and $\Psi$ are mutually inverse, it follows immediately that $\Phi(Q)^\dagger=Q^\dagger$ for all prime ideals $Q$ of $\O G$ not containing $p$.

Now, if $H$ controls a prime ideal $Q$ of $\O G$ with $p\notin Q$, then for any $r\in\Phi(Q)$, we know that $p^nr\in Q$ for some $n\in\mathbb{N}$, so we can write $p^nr=s_1r_1+\dots+s_kr_k$ for some $s_i\in Q\cap\O H$, $r_i\in\O G$. Thus $r=p^{-n}\otimes (s_1r_1+\dots+s_kr_k)=(p^{-n}\otimes s_1)r_1+\dots+(p^{-n}\otimes s_k)r_k$, and since $p^{-n}\otimes s_i\in K\otimes (Q\cap\O H)=P\cap KH$, it follows that $H$ controls $P$.

Finally, let $P$ be a prime ideal of $KG$, and it remains to prove that if $H$ controls $P$ then $H$ controls $Q:=\Phi(P)=P\cap\O G$.

Using \cite[Theorem A]{controller}, we know that $H$ controls $Q$ if and only if $Q$ is controlled by any open normal subgroup of $G$ containing $H$. Since $P$ is also controlled by any such open normal subgroup, we may assume that $H$ is open in $G$, so let $\{g_1,\dots,g_k\}$ be a complete set of coset representatives for $H$ in $G$.

If $r\in Q$ then we can write $r=\underset{1\leq i\leq k}{\sum}r_ig_i$ for some $r_i\in\O H$. But $r\in P=(P\cap KH)KG$, so we know that $r_i\in P\cap KH$ for each $i\leq k$, thus $r_i\in P\cap KH\cap \O G=P\cap \O H=Q\cap\O H$. So $r\in (Q\cap \O H)\O G$, and $Q$ is controlled by $H$ as required.\end{proof}

If $I$ is a right ideal of $K G$, we define the \emph{controller subgroup} $I^\chi$ of $I$ to be the intersection of all open subgroups $U$ of $G$ controlling $I$, which is of course a closed subgroup of $G$.

\begin{lem}\label{lem: controller subgroup}
If $P$ is a prime ideal of $KG$, then $P^\chi$ is a closed, normal subgroup of $G$, and a subgroup $H$ of $G$ controls $P$ if and only if $P^\chi\subseteq H$. In particular, $P^\chi$ controls $P$.
\end{lem}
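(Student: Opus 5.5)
The plan is to reduce everything to the corresponding statement for $\O G$, using Lemma \ref{lem: control in KG and OG} and the Controller Subgroup Theorem \cite[Theorem A]{controller}. That theorem says that for a right ideal $I$ of $\O G$ (or of $kG$), the intersection $I^\chi$ of all open subgroups controlling $I$ itself controls $I$. Moreover, a closed subgroup $H$ controls $I$ if and only if $I^\chi\subseteq H$.

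Let $P$ be a prime ideal of $KG$ and set $Q:=\Psi(P)=P\cap\O G$, which is prime by Lemma \ref{lem: control in KG and OG}. An open subgroup $U$ is normal in the relevant reductions, so the last part of Lemma \ref{lem: control in KG and OG} applies. It shows that $U$ controls $P$ if and only if $U$ controls $Q$. I will check that the proof of that equivalence only used that $U$ is open, together with coset decomposition and clearing denominators, and not normality. Consequently the open subgroups controlling $P$ are exactly those controlling $Q$, so $P^\chi=Q^\chi$.

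\textbf{Normality.} Since $Q$ is a two-sided ideal, for each $g\in G$ the conjugate $gUg^{-1}$ controls $Q$ whenever $U$ does, because $g(Q\cap\O U)g^{-1}=Q\cap\O(gUg^{-1})$ and $gQ g^{-1}=Q$. The family of controlling open subgroups is therefore conjugation-stable, and its intersection $P^\chi$ is a closed normal subgroup.

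\textbf{Control.} By \cite[Theorem A]{controller}, $Q^\chi$ controls $Q$. Because $Q^\chi$ is closed and normal, Lemma \ref{lem: control in KG and OG} transfers this to show that $P^\chi=Q^\chi$ controls $P$. For the equivalence, first suppose $P^\chi\subseteq H$. Then $P=(P\cap KP^\chi)KG\subseteq(P\cap KH)KG\subseteq P$, so $H$ controls $P$. Conversely, suppose $H$ controls $P$. For any open normal $N$, the open subgroup $HN$ also controls $P$, so $P^\chi\subseteq\bigcap_N HN=\overline{H}$. Taking $H$ closed, as in the intended reading, gives $P^\chi\subseteq H$.

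The main obstacle is the transfer between $KG$ and $\O G$ for non-normal open subgroups $U$, since Lemma \ref{lem: control in KG and OG} was stated for closed normal $H$. I expect to handle it with the same coset argument given there. Choose coset representatives $g_1,\dots,g_k$ of $U$ in $G$. Then $\O G$ and $KG$ are free modules on these representatives, with components lying in $\O U$ and $KU$ respectively, and membership in $(P\cap KU)KG$ can be read off componentwise. Clearing denominators by a central power of $p$ handles the direction from $Q$ to $P$.
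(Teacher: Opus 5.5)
Your proof is correct and follows essentially the same route as the paper. You pass to $Q=P\cap\O G$ via Lemma~\ref{lem: control in KG and OG}, get normality from conjugation-stability of the family of controlling open subgroups, and invoke \cite[Theorem A]{controller} for $Q^\chi$. You are slightly more careful than the paper, which applies Lemma~\ref{lem: control in KG and OG} (stated only for normal $H$) to arbitrary closed subgroups; you instead check the coset argument directly for non-normal open $U$, and you obtain the ``only if'' direction elementarily from $P^\chi\subseteq\bigcap_N HN=\overline{H}$.
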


\begin{proof}

Clearly $P^\chi$ is a closed subgroup of $G$, and if an open subgroup $U$ of $G$ controls $P$ then for all $g\in G$, $(P\cap K^gU)KG=g(P\cap KU)KGg^{-1}=gPg^{-1}=P$, so $^gU$ controls $P$. It follows that the intersection $P^\chi$ of all open subgroups controlling $P$ is a normal subgroup of $G$.

Now, using Lemma \ref{lem: control in KG and OG}, we can write $P=K\otimes_\O Q$, where $Q=P\cap\O G$ is a prime ideal of $\O G$, and a closed subgroup of $G$ controls $P$ if and only if it controls $Q$, and hence $P^\chi=Q^\chi$. But by \cite[Theorem A]{controller}, a closed subgroup of $G$ controls $Q$ if and only if it contains $Q^\chi=P^\chi$ as required.\end{proof}



\begin{defn}\label{defn: non-splitting right ideals} $ $

\begin{enumerate}[label=(\roman*)]
\item \cite[Definition 5.5]{ardakovInv} A prime ideal $P$ of $KG$ is \emph{non-splitting} if $P\cap KH$ is prime for any open normal subgroup $H\lhd G$ that controls $P$.
\item \cite[Definition 5.7]{ardakovInv} A right ideal $I$ of $KG$ is \emph{virtually non-splitting} if $I = PKG$ for some non-splitting prime ideal $P\lhd KH$ and some open subgroup $H \leq G$.
\end{enumerate}
\end{defn}

\begin{props}\label{props: non-splitting right ideals}
$ $

\begin{enumerate}[label=(\roman*)]
\item \cite[Proposition 4.3]{jones-control-theorem} If $P$ is non-splitting, then $P\cap KP^\chi$ is prime.
\item \cite[Theorem 3.17]{jones-primitive-ideals} Let $C$ be a closed subgroup of $G$. If all faithful virtually non-splitting right ideals of $KG$ are controlled by $C$, then all faithful prime ideals of $KG$ are controlled by $C$.
\end{enumerate}
\end{props}

Here is our first refinement:

\begin{propn}\label{propn: from Z2 to Z}
Let $G$ be a nontrivial $p$-valuable group, and suppose that for every normal subgroup $U$ of $G$, every faithful prime ideal of $KU$ is controlled by $Z_2(U)$. Then every faithful prime $P$ of $KG$ is controlled by $Z(G)$.
\end{propn}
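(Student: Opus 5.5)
The plan is to induct on $\dim G$, pass to virtually non-splitting right ideals via Properties \ref{props: non-splitting right ideals}(ii), and then use the commutator structure of $Z_2$ to push the controller subgroup down from $Z_2$ into the centre. By Properties \ref{props: non-splitting right ideals}(ii) applied with $C=Z(G)$, it suffices to show that every faithful virtually non-splitting right ideal $I$ of $KG$ is controlled by $Z(G)$. Write $I=QKG$ with $Q$ a non-splitting prime of $KH$ and $H\le G$ open. Replacing $H$ by a smaller open normal subgroup would require re-establishing non-splitting, which I want to avoid. So I will first check, via the usual argument with coset representatives as in the proof of Lemma \ref{lem: control in KG and OG}, that faithfulness of $I$ forces $Q$ to be faithful. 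I will then work with $H$ itself, passing to its normal core $U$ when I need a normal subgroup of $G$. The hypothesis applied to $U$ (or to $H$, if I can arrange normality) shows that the relevant faithful primes are controlled by $Z_2(U)$. By Lemma \ref{lem: controller subgroup}, $Q^\chi\subseteq Z_2(H)$, and since $Q$ is non-splitting, $q:=Q\cap KQ^\chi$ is a prime ideal by Properties \ref{props: non-splitting right ideals}(i).

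The central step is to show that $N:=Q^\chi$ is contained in $Z(H)$. The ideal $q$ is stable under conjugation by every $g\in H$, and for $x\in N$ we have $gxg^{-1}=z_g(x)\,x$ with $z_g(x)=(g^{-1},x)^{\pm1}\in Z(H)\cap N$. The map $x\mapsto z_g(x)$ is a continuous homomorphism $N\to Z(H)$ by Lemma \ref{lem: commutators}(i), using centrality of the commutators. I would then argue as follows. Since $N\subseteq Z_2(H)$ has nilpotency class at most $2$ and $q$ is $H$-stable, choose $g$ and $x$ with $z:=z_g(x)\neq 1$ and consider the closed subgroup generated by $x$ and $Z(H)\cap N$. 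Modulo $q$, conjugation by $g$ multiplies the image of $x$ by the central unit $z$. Because the automorphism $\sigma$ of conjugation by $g$ is bounded, $\sigma^{p^n}$ tends to the identity. Comparing $\sigma^{p^n}(x)=z^{p^n}x$ inside the prime quotient $KN/q$, and using that $x$ is a unit, should force $z^{p^n}-1$ to lie in $q$ for suitable $n$, and hence $z\in Q^\dagger$ up to torsion. Since $H$ is torsion-free and $Q$ is faithful, this gives a contradiction, so $z_g(x)=1$ for all $g,x$, that is, $N\subseteq Z(H)$. I expect this step to be the main obstacle: making the ``$\sigma^{p^n}\to\mathrm{id}$'' argument rigorous inside the (non-complete) prime quotient. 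What I would try first is to pass to a suitable completion, or to work with the commutative algebra $K(Z(H)\cap N)/(q\cap K(Z(H)\cap N))$, which is a domain, and use the trace or eigenvalue argument on the finite-dimensional pieces there.

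Finally, I transfer the conclusion from $H$ to $G$. Since $G$ is $p$-valuable, if $g\in G$ commutes with $h^{p^n}$ then it commutes with $h$, because $p$-th roots are unique (Note after Definition \ref{defn: p-valuation}). Hence $Z(H)=C_H(H)\subseteq C_G(H)=Z(G)$ for every open $H$. So $I=QKG=(Q\cap KZ(H))KG$ is controlled by $Z(G)$, and Properties \ref{props: non-splitting right ideals}(ii) then gives that every faithful prime $P$ of $KG$ is controlled by $Z(G)$.
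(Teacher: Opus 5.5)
Your overall structure matches the paper: reduce via Properties \ref{props: non-splitting right ideals}(ii) to a non-splitting prime $Q$ of $KH$ with $N=Q^\chi\subseteq Z_2(H)$, show $N\subseteq Z(H)$, then use $Z(H)\subseteq Z(G)$. However, the central step, showing $N\subseteq Z(H)$, has a genuine gap.

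The inference that $\sigma^{p^n}\to\mathrm{id}$ and $\sigma^{p^n}(x)=z^{p^n}x$ force $z^{p^n}-1\in q$ for some $n$ is not valid. The fact $z^{p^n}\to 1$ only says $z^{p^n}-1\to 0$ in $KN$, and $q$, being a proper ideal, is not open, so no finite stage lands in $q$. Nor is a finite-dimensional eigenvalue argument available, since $KN/q$ is generally infinite-dimensional.

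Worse, the argument cannot be repaired as designed, because it never uses that $N$ is the \emph{controller} subgroup of $Q$. It only uses that $q$ is a faithful $H$-stable prime and that $N\subseteq Z_2(H)$. If it worked with $H=N$, it would show that every class-$2$ group whose rational Iwasawa algebra has a faithful prime is abelian. That is false. Take the $\mathbb{Z}_p$-Heisenberg group $N$ with central generator $c$, and any prime minimal over the proper ideal $(c-\lambda)KN$ with $1\neq\lambda\in 1+p^2\mathbb{Z}_p$. This prime is faithful: modulo it, $c^a-1\equiv\lambda^a-1$ is a nonzero scalar for $a\neq 0$, and every nontrivial closed normal subgroup of $N$ meets $Z(N)$.

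The missing ingredient is Theorem \ref{thm: control theorem using bounded automorphisms}. Apply it with the following data:
\begin{itemize}
\item the group $N$;
\item the prime $q$, which is faithful since $q^\dagger\subseteq Q^\dagger=1$;
\item the bounded automorphism $\varphi$ of $N$ given by conjugation by $g\in H$;
\item the central subgroup $\mathcal{Z}=N\cap Z(H)$.
\end{itemize}
Since $N\subseteq Z_2(H)$, $\varphi$ centralises both $\mathcal{Z}$ and $N/\mathcal{Z}$. So if $\varphi$ were nontrivial, $q$ would be controlled by a proper open subgroup $V$ of $N$. Then $Q=qKH=(q\cap KV)KH$ would be controlled by $V\not\supseteq Q^\chi$, contradicting Lemma \ref{lem: controller subgroup}. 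Hence every $g\in H$ centralises $N$.

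So the contradiction comes from the minimality of the controller subgroup, not from faithfulness; this is exactly the paper's argument. Your finite-dimensional idea does have a place, because the theorem requires $KN/q$ to be infinite-dimensional. In the remaining case, $N$ is abelian by Proposition \ref{propn: infinite codimension}, and $KN/q$ is a finite field extension of $K$. Then $\varphi$ induces a $K$-automorphism of it of some finite order $m$, so $z^m-1\in q$, which gives $z=1$.

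Separately, your detour through the normal core $U$ of $H$ is not justified as written, since $Q\cap KU$ need not be prime. The paper simply takes the open subgroup to be normal.
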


\begin{proof}
First, suppose $P$ is non-splitting, so that $Q := P\cap KP^\chi$ is a faithful prime ideal of $KP^\chi$, and $Q^\chi = P^\chi$. Fix $g\in G \setminus Z$, and let $\varphi_g$ be the bounded automorphism of $G$ given by conjugation by $g$. Clearly $\varphi_g(Q) = Q$, and $\varphi$ centralises both $Z$ and $Q^\chi/Z$, but $Q$ is not controlled by any proper open subgroup of $Q^\chi$, so by Theorem \ref{thm: control theorem using bounded automorphisms} $\varphi_g$ must act trivially on $Q^\chi$. But since $g$ was arbitrary, this says that every $g\in G$ centralises $Q^\chi$, i.e. $Q^\chi \subseteq Z$.

Now suppose $I=PKG$ is a faithful virtually non-splitting right ideal, for $P$ a faithful prime ideal in $KU$, for $U$ an open normal subgroup of $G$. Then $P$ is controlled by $Z_2(U)$ by assumption, so using the argument above we see that $P$ is controlled by $Z(U)=Z(G)\cap U$, so $$(I\cap KZ(G))KG=(P\cap KZ(U))KU\cdot KG=PKG=I$$ It follows from Properties \ref{props: non-splitting right ideals} that all faithful prime ideals in $KG$ are controlled by $Z(G)$.\end{proof}



The basic control theorem we will use throughout this note is:

\begin{thm}\label{thm: control theorem using bounded automorphisms}
\cite[Theorem 3.16]{jones-primitive-ideals} Let $G$ be a $p$-valuable group and let $P$ be a faithful prime ideal of $KG$ such that $KG/P$ is infinite dimensional over $K$. Fix a nontrivial bounded automorphism $\varphi$ of $G$ and a closed central subgroup $\mathcal{Z}$ of $G$. 

If $\varphi(P) = P$, and $\varphi$ centralises both $\mathcal{Z}$ and $G/\mathcal{Z}$, then $P$ is controlled by a proper open subgroup of $G$.\qed
\end{thm}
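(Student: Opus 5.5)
The plan is to linearise $\varphi$ into a derivation of a completion of $KG$ and then run an Ardakov-style control argument. Whether the closure of $P$ contracts back to $P$, and what exactly the hypothesis $\dim_K KG/P=\infty$ is used for, are the points I am least sure of.

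\textbf{Step 1: the twist homomorphism.} Define $\delta: G\to\mathcal{Z}$ by $\delta(g)=\varphi(g)g^{-1}$. This lands in $\mathcal{Z}$ because $\varphi$ centralises $G/\mathcal{Z}$. It is a continuous group homomorphism, since
$$\varphi(gh)(gh)^{-1}=\varphi(g)\delta(h)g^{-1}=\delta(g)\delta(h),$$
using that $\delta(h)$ is central. Because $\varphi$ fixes $\mathcal{Z}$ pointwise, induction gives $\varphi^{n}(g)=\delta(g)^{n}g$. The same formula holds for $n\in\mathbb{Z}_p$ by continuity, and $\delta\neq 1$ since $\varphi$ is nontrivial. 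Boundedness, $\deg(\varphi)>(p-1)^{-1}$, says $\omega(\delta(g))-\omega(g)$ is uniformly larger than $(p-1)^{-1}$.

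\textbf{Step 2: the derivation.} Extend $\varphi$ to a $w$-continuous $K$-algebra automorphism of $KG$ and of its completion $\widehat{KG}$. Set
$$\partial=\lim_{n\to\infty}\frac{\varphi^{p^n}-1}{p^n},$$
which on group elements is $\partial(g)=\log(\delta(g))\,g$, with $\log(\delta(g))\in\widehat{K\mathcal{Z}}$ central. Boundedness of $\varphi$ is exactly what makes the $w$-estimates work, so that $\partial$ converges and is continuous on $\widehat{KG}$. Then $\partial$ is a $\widehat{K\mathcal{Z}}$-linear derivation. Since $\varphi(P)=P$, the closure $\overline{P}$ of $P$ in $\widehat{KG}$ is $\partial$-stable.

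\textbf{Step 3: choosing the open subgroup.} Pick a surjection $\theta$ from $\delta(G)$ onto $\mathbb{Z}_p$; this is possible because $\delta(G)$ is a nontrivial torsion-free abelian pro-$p$ group. Let $H=\ker(G\to\mathbb{Z}_p\to\mathbb{Z}/p)$, a proper open normal subgroup, and fix $g$ with $\theta\delta(g)=1$. Write every $x\in P$ uniquely as $x=\sum_{i=0}^{p-1}x_ig^i$ with $x_i\in KH$, and apply $\partial$ repeatedly. Each application multiplies the $g^i$-component by the central factor $c_i=\log(\delta(g^i))$, up to terms remaining in $\widehat{KH}$. A Vandermonde argument on the distinct values $c_0,\dots,c_{p-1}$ should then isolate each component $x_i$ inside $\overline{P}$.

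\textbf{Step 4: contracting back (main obstacle).} The Vandermonde step requires inverting differences $c_i-c_j$, which are central elements of $\widehat{K\mathcal{Z}}$, modulo $\overline{P}$. Two things need checking here:
\begin{itemize}
\item \textbf{Invertibility.} I would first try to show these differences act invertibly on the prime quotient $\widehat{KG}/\overline{P}$, using faithfulness of $P$ and the fact that $P\cap K\mathcal{Z}$ is a prime of the commutative ring $K\mathcal{Z}$.
\item \textbf{Contraction.} I would show $\overline{P}\cap KG=P$, for instance by a Zariskian-filtration argument on the completion.
\end{itemize}
I expect infinite-dimensionality of $KG/P$ to be what excludes the degenerate case, where $P$ contains a maximal ideal of $K\mathcal{Z}$ of finite codimension and all these central values become scalars that may coincide. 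I am least confident about this step: the exact mechanism by which infinite codimension forces the $c_i$ to separate, and whether contraction holds without further hypotheses.

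Once both points are in place, each $x_i\in P\cap KH$, hence $P=(P\cap KH)KG$, so $P$ is controlled by the proper open subgroup $H$.
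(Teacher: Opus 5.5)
The paper gives no proof of this statement; it is quoted from \cite[Theorem 3.16]{jones-primitive-ideals}, so your sketch has to stand on its own. It has a genuine gap exactly where you suspect, and in the form you set it up that step is false, not merely unproved: the closure $\overline{P}$ of $P$ in the fixed Lazard completion $\widehat{KG}$ can be the whole ring for a $P$ satisfying every hypothesis. Here is an example.
\begin{itemize}
\item Take $G$ the Heisenberg group and $\mathcal{Z}=Z(G)\cong\mathbb{Z}_p$.
\item Let $\varphi$ be conjugation by a non-central element. It is inner, hence bounded, trivial on $\mathcal{Z}$ and on $G/\mathcal{Z}$, and it fixes every two-sided ideal.
\item Pick $1\neq z\in\mathcal{Z}$ with $\omega(z)>1$, and let $P$ be any prime of $KG$ containing $z-(1+p)$.
\end{itemize}
Then $P\cap K\overline{\langle z\rangle}$ is the kernel of $z\mapsto 1+p$. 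Every nontrivial closed normal subgroup of $G$ meets $\mathcal{Z}$ nontrivially, and $(1+p)^a\neq 1$ for $0\neq a\in\mathbb{Z}_p$, so $P$ is faithful. It has infinite codimension by Proposition~\ref{propn: infinite codimension}. But $z-(1+p)=-p\bigl(1-p^{-1}(z-1)\bigr)$ with $w(p^{-1}(z-1))\geq\omega(z)-1>0$, so $z-(1+p)$ is a unit of $\widehat{KG}$. Hence $\overline{P}=\widehat{KG}$, every conclusion ``$x_ig^i\in\overline{P}$'' is vacuous, and $\overline{P}\cap KG\neq P$. Even when $\overline{P}$ is proper, nothing makes it prime.

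This example also has $P\cap K\mathcal{Z}$ maximal, which is precisely the case you expected infinite codimension to exclude. Separating the $c_i$ is a matter of faithfulness ($\delta(g)-1\notin P$), not codimension. What is actually needed is a completion adapted to $P$, e.g.\ of the prime ring $KG/P$, possibly after inverting the regular central element $\delta(g)-1$. It must satisfy the following:
\begin{itemize}
\item $\varphi$ and $\lim_n(\varphi^{p^n}-1)/p^n$ still extend to it;
\item it is nonzero, and nonzero central elements stay regular;
\item $KG/P$ maps into it injectively.
\end{itemize}
Constructing such a completion is the substance of the cited result, and your proposal does not supply it.

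Second, even granting such a completion, the Vandermonde argument in Step 3 is wrong as stated. Since $\mathcal{Z}\subseteq\ker\delta\subseteq H$, the derivation $\partial$ preserves each summand $\widehat{KH}g^i$. These summands are not eigenspaces, however: $\partial(yg^i)=(\partial(y)+c_iy)g^i$, and $\partial(h)=\log(\delta(h))h\neq 0$ for $h\in H\setminus\ker\delta$. That set is nonempty because $G/\ker\delta\cong\delta(G)$ is infinite, so $\ker\delta$ is never open. Thus $\partial^m(x)=\sum_i(\partial+c_i)^m(x_i)g^i$. Any polynomial $F$ with $F(c_i)=1$ and $F(c_j)=0$ for $j\neq i$ leaves terms $F'(c_j)\partial(x_j)+\cdots$ in every component, so nothing is isolated.

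In Ardakov's characteristic-$p$ setting this problem disappears, because $\theta\delta(H)\subseteq p\mathbb{Z}_p$ reduces to $0$ in $k$. In characteristic $0$ you need an extra argument. For instance, when $\delta(G)$ is procyclic, normalise $D=\partial/\log\delta(g)$; it acts on the $i$-th summand as $i+D_H$, where $D_H(h)=\theta(\delta(h))h$ with $\theta(\delta(h))\in p\mathbb{Z}_p$. You would then prove operator bounds $\|D\|\le 1$ and $\|D_H\|\le|p|$ on your completion. These give that $\lim_n D^{p^n}$ exists and acts on the $i$-th summand by the Teichm\"uller representative of $i$, and only then can you interpolate. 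When $\delta(G)$ has rank greater than $1$, the values $\log\delta(h)$ are not multiples of $\log\delta(g)$, and even this needs modification.
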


The requirement that $P$ have infinite codimension in $KG$ may seem problematic, but we are principally interested in the case where $G$ is solvable, in which case this does not pose a problem:

\begin{propn}\label{propn: infinite codimension}
If $G$ is a non-abelian, solvable $p$-valuable group and $P$ is a faithful prime ideal of $KG$, then $KG/P$ is infinite dimensional over $K$
\end{propn}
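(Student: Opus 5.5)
We argue by contradiction. Suppose $KG/P$ is finite dimensional over $K$, say of dimension $n$. Then $KG/P$ is a finite-dimensional prime $K$-algebra, hence simple Artinian, so $KG/P\cong M_m(D)$ for some division algebra $D$ finite dimensional over $K$. The composite $G\to (KG/P)^\times\subseteq \mathrm{GL}_N(K)$ (via the regular representation of $KG/P$ on itself, $N=n$) is a group homomorphism. Its kernel is exactly $P^\dagger$, which is trivial because $P$ is faithful. So $G$ embeds as a subgroup of $\mathrm{GL}_N(K)$. The image is compact, being continuous with respect to the natural topologies: the map $KG\to KG/P$ is continuous when the target carries its unique Hausdorff vector space topology, since $P\cap\O G$ is closed.

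The plan is then to show that a faithful finite-dimensional representation forces $G$ to be abelian, using solvability together with the pro-$p$/$p$-valuable structure.

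\textbf{Step 1 (reduce to a nilpotent situation).} The image $\rho(G)$ is a compact solvable subgroup of $\mathrm{GL}_N(K)$. Its Zariski closure is a solvable algebraic group, so by Lie--Kolchin, after passing to a finite extension $L/K$, some finite-index subgroup is triangularisable. Since $G$ is pro-$p$ and torsion-free, I would rather exploit the structure of the algebra directly. The image of $KG$ is all of $M_m(D)$, and the image of $\O G$ is an $\O$-order $\Lambda$ in it. Each $g\in G$ maps to a unit with $\rho(g)^{p^k}\to 1$, because $G$ is pro-$p$ and $\rho$ is continuous. Hence $\rho(G)$ lies in a pro-$p$ subgroup of $\Lambda^\times$.

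\textbf{Step 2 (key step: the derived subgroup acts unipotently, hence trivially).} Take the last nontrivial term $A$ of the derived series of $G$. It is a closed, abelian, normal, isolated-after-saturation subgroup. The commutative algebra $K[\rho(A)]$ is semisimple: its image in the simple algebra is normalised by $\rho(G)$, and by a Clifford-type argument $KG/P$ is a direct sum of $G$-conjugate simple $KA$-modules, each corresponding to a character $\chi$ of $A$ with values in $\bar K^\times$. The group $G$ permutes these finitely many characters, so an open subgroup $U$ fixes them all. For $u\in U$ and $a\in A$ this gives $\chi(uau^{-1}a^{-1})=1$ for every character $\chi$ that occurs. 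Since these characters jointly separate $\rho(A)$, we get $\rho((u,a))=1$, and faithfulness gives $(u,a)=1$. Thus $U$ centralises $A$.

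\textbf{Step 3 (pass from $U$ to $G$).} Because $G$ is $p$-valuable, centralisers are isolated. If $g^{p^k}$ centralises $a$, then $g$ centralises $a^{p^k}$ by the uniqueness of $p$-th roots in the Note after Definition~\ref{defn: p-valuation}. Arguing through Lemma~\ref{lem: commutators}(ii) and Lazard's logarithm, $(g,a)=1$ follows. Hence $A\subseteq Z(G)$.

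\textbf{Step 4 (induction).} I then induct on the derived length, or on the rank. Pass to $G/A$ modulo its isolator, taking care that faithfulness is preserved; this is the delicate point. Alternatively, apply the same Clifford argument to $Z_2$-type layers to show that $\rho(G)$ is a finite-dimensional nilpotent image whose commutators act unipotently. A unipotent element that is also of finite "semisimple type" must be trivial. The cleanest route is to observe that in $M_m(D)$ the commutator $(g,h)$ of elements normalising the semisimple subalgebra generated by a central-by-abelian layer is unipotent. A unipotent element of the pro-$p$ image with $\rho(g)-1$ nilpotent contradicts primeness only via trivial $D$-structure, so I expect this step to be the main obstacle. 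Once it is done, every layer is central, so $G$ is nilpotent of class $1$, that is, abelian, which contradicts the hypothesis.
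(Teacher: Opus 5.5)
Your Steps 2 and 3 are essentially right. By Clifford's theorem $KG/P$ is semisimple over $KA$. An open subgroup $U$ fixes the finitely many characters of $A$ that occur, and faithfulness then gives $(U,A)=1$. Since $(aga^{-1})^{p^k}=g^{p^k}$ forces $aga^{-1}=g$, you get $A\subseteq Z(G)$.

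That is not a contradiction, though: for a Heisenberg-type group the last derived term is already central. So the whole proof rests on Step 4, and Step 4 contains no argument.

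\begin{itemize}
\item Passing to $G/A$ does not merely risk losing faithfulness. Since $A\neq 1=P^\dagger$, the ideal $(A-1)KG$ is not contained in $P$, so $KG/P$ is not a module over the Iwasawa algebra of $G/A$ at all.
\item The remarks about unipotent commutators are never substantiated. You set Lie--Kolchin aside in Step 1, and nothing later shows that any commutator acts unipotently. You also flag this step yourself as the unresolved main obstacle.
\end{itemize}

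As written, the proposal proves only that the last nontrivial derived term is central.

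The missing step is short and uses what you already have. Write $A=G^{(d)}$ with $d\geq 1$, and take $x,y\in G^{(d-1)}$ and $c=(x,y)\in A$.

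\begin{itemize}
\item By your Step 2, $\rho(c)$ is diagonalisable over $\bar K$.
\item Since $c$ is central in $G$, each eigenspace $V_\mu$ of $\rho(c)$ in $(KG/P)\otimes_K\bar K$ is stable under $\rho(x)$ and $\rho(y)$.
\item Restrict $\rho(x)\rho(y)\rho(x)^{-1}\rho(y)^{-1}=\rho(c)$ to $V_\mu$ and take determinants. This gives $\mu^{\dim V_\mu}=1$.
\item Hence $\rho(c)$ has finite order $n$, so $c^n\in P^\dagger=1$, and torsion-freeness gives $c=1$.
\end{itemize}

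So $G^{(d)}=1$, contradicting the choice of $A$.

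The paper avoids this group-theoretic layering altogether. It takes a simple module $M$ with $P=\mathrm{Ann}_{KG}M$ and extends the action to $\widehat{KG}\supseteq\log(G)$. Lie's theorem for the solvable Lie algebra $\log(G)$ then gives a one-dimensional quotient $M'$ of $F\otimes_K M$ for some finite extension $F/K$. By irreducibility $M$ embeds in $M'$, so $G$ acts on $M$ through a character. Exponentiating the nonzero ideal $\ker\lambda\supseteq[\log(G),\log(G)]$ gives a nontrivial normal subgroup acting trivially on $M$, contradicting faithfulness.
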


\begin{proof}
Suppose for contradiction that $P$ is faithful and $KG/P$ is finite dimensional. Since it is also a prime ring, there must exist a finite dimensional, irreducible $KG$-module $M$ such that $P=Ann_{KG}M$.

The action of $KG$ on $M$ extends to the completion $\widehat{KG}$, which contains the Lie algebra $\log(G)$ of $G$ as a subalgebra. This Lie algebra is solvable, so by Lie's theorem there must exist a finite extension of fields $F/K$ such that $F\otimes_K M$ is completely solvable over $\log(G)$. In particular, $F\otimes_K M$ contains a submodule $N$ of codimension 1, so let $M':=(F\otimes_K M)/N$.

Now, the action of $KG$ on $M$ extends to an action of $FG$ on $F\otimes_K M$, and $N$ remains an $FG$-submodule. So $M'$ is a $FG$-module, and the natural map $M\to M'$ is a non-zero $KG$-module map. Moreover, since $M$ is irreducible as a $KG$-module, this map must be injective, so setting $Q:=Ann_{FG}M'$ we see that $Q\cap KG=Ann_{KG}M'\subseteq Ann_{KG} M=P$.

But $M'$ is one dimensional over $F$, so writing $M'=Fv$, there exists a Lie algebra homomorphism $\lambda:\log(G)\to F$ such that $x\cdot v=\lambda(x)v$ for all $x\in\log(G)$. Let $\mathfrak{h}:=\ker(\lambda)$; an ideal of $\log(G)$ acting by zero on $M'$, and since $\log(G)$ is non-abelian, $\mathfrak{h}\neq 0$. Let $H:=\exp(\mathfrak{h})$; a non-trivial normal subgroup of $G$, acting by the identity on $M'$, and hence $(H-1)KG\subseteq Q\cap KG\subseteq P$, contradicting faithfulness.

Thus we conclude that $P$ has infinite codimension in $KG$.\end{proof}


\section{Control theorems}\label{sec: control theorem}

Throughout this section, we will fix $G$ a $p$-valuable group.

\begin{defn} $ $
\begin{itemize}
    \item The \emph{lower central series} of $G$ is the chain of closed, isolated, characteristic subgroups: $1=Z_0(G)\subseteq Z_1(G)\subseteq Z_2(G)\subseteq\dots$, where for all $i>0$, $$Z_i(G):=\{g\in G:(g,G)\subseteq Z_{i-1}(G)\}$$

    \item If $H$ is a subgroup of $G$, define the \emph{centraliser} of $H$ to be the subgroup $$\mathbf{C}_G(H):=\{g\in G:(g,H)=1\}$$ Note that if $H$ is closed/isolated/characteristic then so is $C_G(H)$.
\end{itemize}
\end{defn}

\subsection{Old Control Theorem: The subgroup $A(G)$}

In \cite[section 3]{jones-primitive-ideals}, the first author applied Theorem \ref{thm: control theorem using bounded automorphisms} to prove a control theorem for faithful prime ideals in $KG$, which we will recap in full detail now.

\begin{defn}\label{defn: A-groups}
Define a descending sequence of subgroups $G=A_0 \supseteq A_1\supseteq A_2\supseteq\dots$ by $A_{i+1} := \mathbf{C}_{A_{i}}(Z_2(A_i))$. Set $A(G) := \bigcap_{i\in\mathbb{N}} A_i$.
\end{defn}

Note that each $A_i$ is a closed, isolated, characteristic subgroup of $G$, so it follows that the sequence $G=A_0\supseteq A_1\supseteq\dots$ must terminate, and thus $A(G)=A_n$ for some $n\in\mathbb{N}$.

\begin{lem}\label{lem: properties of A}
$A(G)$ is a proper subgroup of $G$ if and only if $Z(G)\neq Z_2(G)$, and if $G$ is nilpotent then $A(G)$ is abelian.
\end{lem}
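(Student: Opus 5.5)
The plan is to prove the two claims separately, each time working with the first step $A_1=\mathbf{C}_G(Z_2(G))$ and then with the descending chain $A_0\supseteq A_1\supseteq\cdots$, which stabilises at $A(G)=A_n$.

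\emph{First claim.} I would first show that $A(G)=G$ forces $A_1=G$. Indeed $A(G)=A_n\subseteq A_1\subseteq G$, so $A(G)=G$ gives $A_1=G$. Conversely, if $A_1=G$, the recursion gives $A_2=\mathbf{C}_G(Z_2(G))=G$, and inductively $A_i=G$ for all $i$. Hence
$$A(G)=G \iff A_1=G \iff Z_2(G)\subseteq Z(G),$$
since $\mathbf{C}_G(Z_2(G))=G$ says exactly that every element of $Z_2(G)$ is central. Because $Z(G)\subseteq Z_2(G)$ always holds, the last condition is $Z(G)=Z_2(G)$. This proves that $A(G)$ is proper if and only if $Z(G)\neq Z_2(G)$.

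\emph{Second claim.} Suppose $G$ is nilpotent and set $A:=A(G)=A_n$. Since the chain has stabilised, $\mathbf{C}_A(Z_2(A))=A$, so $Z_2(A)=Z(A)$. The key observation is that the upper central series of a group stabilises as soon as two consecutive terms agree. This is immediate from the recursive definition: $Z_3(A)=\{g:(g,A)\subseteq Z_2(A)=Z(A)\}=Z_2(A)$, and so on by induction, giving $Z_i(A)=Z(A)$ for all $i\geq 1$. Now $A$ is a subgroup of a nilpotent group, hence nilpotent, so $Z_c(A)=A$ for some $c$. Therefore $A=Z(A)$, and $A$ is abelian.

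The argument is essentially formal. The only points needing care are the stabilisation of the chain, which the paper already asserts from the closed and isolated property together with finite rank, and the elementary fact that subgroups of nilpotent groups are nilpotent. I expect no serious obstacle; the mildest subtlety is the stabilisation observation $Z_2=Z\Rightarrow Z_i=Z$ for all $i$, which the induction above handles.
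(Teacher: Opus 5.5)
Your proof is correct and follows essentially the same route as the paper. The first claim is read off from $A_1=\mathbf{C}_G(Z_2(G))$, and you make explicit the step $A(G)=G \iff A_1=G$, which the paper leaves implicit. The second claim rests on the fact that a nilpotent group with $Z=Z_2$ is abelian: the paper applies this to every $A_i$ to get the dichotomy ``$A_i$ abelian or $A_{i+1}\subsetneq A_i$'', whereas you apply it once, at the stable term $A_n=A_{n+1}$.
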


\begin{proof}

Since $A_1=\mathbf{C}_{G}(Z_2(G))$, it is clear that this is a proper subgroup of $G$ if and only if $Z_2(G)$ is not central in $G$, i.e. if and only if $Z(G)\neq Z_2(G)$.

If $G$ is nilpotent, then for each $i\geq 0$, $A_i$ is nilpotent, so either $A_i$ is abelian or $Z(A_i)\neq Z_2(A_i)$. In the latter case, $A_{i+1}=\mathbf{C}_{A_i}(Z_2(A_i))$ is a proper subgroup of $A_i$. Therefore, either $A_i$ is abelian for some $i$, or the descending sequence $G=A_0\supseteq A_1\supseteq\dots$ does not terminate. But we know this sequence does terminate at $A(G)$, so this implies that $A_i$ is abelian for sufficiently high $i$, and hence $A(G)$ is abelian.\end{proof}

The following result is a direct application of Theorem \ref{thm: control theorem using bounded automorphisms}.

\begin{thm}\cite[Theorem B and proof in \S 3.5]{jones-primitive-ideals}\label{thm: faithful primes controlled by A(G)}
If $G$ is solvable and $P$ is a faithful prime ideal of $KG$, then $P$ is controlled by $A(G)$.\qed
\end{thm}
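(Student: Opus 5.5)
We argue by induction along the chain $G=A_0\supseteq A_1\supseteq\dots\supseteq A_n=A(G)$ of Definition \ref{defn: A-groups}. The claim to prove is: for each $i$, every faithful prime ideal of $KA_i$ is controlled by $A_{i+1}$, and more generally by $A(G)$. We apply this to the groups $A_i$ themselves, which are again solvable and $p$-valuable, being closed isolated subgroups of $G$. It then suffices to prove a one-step statement: \emph{if $G$ is solvable and $P$ is a faithful prime of $KG$, then $P$ is controlled by $\mathbf{C}_G(Z_2(G))$.} Granting this, $Q:=P\cap KA_1$ is a faithful ideal of $KA_1$ with $P=QKG$. We then want to pass to $A_1$ and iterate, and the subgroup obtained at the end is $A_n=A(G)$.

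For the one-step statement, we first reduce to non-splitting primes. By Properties \ref{props: non-splitting right ideals}(ii), it suffices to show that every faithful virtually non-splitting right ideal $I=PKG$ is controlled by $\mathbf{C}_G(Z_2(G))$. Here $P$ is a non-splitting prime of $KU$ for some open $U$, and it is faithful. Since $U$ is open, $Z_2(U)=Z_2(G)\cap U$ and $\mathbf{C}_U(Z_2(U))=\mathbf{C}_G(Z_2(G))\cap U$; this uses the fact that the groups are $p$-valuable, so centralisers and isolated series are insensitive to passing to open subgroups. So, replacing $G$ by $U$, we may assume $P$ is non-splitting. Set $Q:=P\cap KP^\chi$, which is prime by Properties \ref{props: non-splitting right ideals}(i), and is faithful and not controlled by any proper open subgroup of $P^\chi$. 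If $P^\chi$ is abelian, we instead argue directly with the $Z_2$-action below.

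The key step mirrors the proof of Proposition \ref{propn: from Z2 to Z}. Fix $z\in Z_2(G)$ and let $\varphi_z$ be conjugation by $z$ restricted to $H:=P^\chi$. This is a bounded automorphism with $\varphi_z(Q)=Q$, since $Q=P\cap KH$ and $P$ is $G$-stable. We take $\mathcal Z:=Z(G)\cap H$, which is central in $H$. Since $(z,g)\in Z(G)$ for all $g$, the automorphism $\varphi_z$ centralises $\mathcal Z$ and acts trivially on $H/\mathcal Z$. Proposition \ref{propn: infinite codimension} gives infinite codimension when $H$ is non-abelian. If $\varphi_z$ were nontrivial on $H$, Theorem \ref{thm: control theorem using bounded automorphisms} would make $Q$ controlled by a proper open subgroup of $H$, contradicting minimality of $P^\chi$. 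Hence $Z_2(G)$ centralises $P^\chi$, i.e. $P^\chi\subseteq \mathbf{C}_G(Z_2(G))$, and Lemma \ref{lem: controller subgroup} gives the control.

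The main obstacle is the abelian and finite-codimension case, together with the iteration. When $H=P^\chi$ is abelian, Theorem \ref{thm: control theorem using bounded automorphisms} may not apply, because $Q$ could have finite codimension. There I would use that $Q$ is a faithful prime of a commutative algebra $KH$ that is stable under $\varphi_z$, and argue via characters, as in the proof of Proposition \ref{propn: infinite codimension}, that a nontrivial unipotent action incompatible with faithfulness forces $\varphi_z=1$. For the iteration, the descent from $G$ to $A_1$ requires that $P\cap KA_1$, which is only $G$-prime, decompose as an intersection of a finite $G$-orbit of faithful primes of $KA_1$, or at least that the virtually non-splitting reduction can be run inside $A_1$. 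I would handle this by redoing the whole argument with $P^\chi\subseteq A_i$ directly: assume $P^\chi\subseteq A_i$, take $z\in Z_2(A_i)$, and apply the same bounded-automorphism argument to $Q\lhd KP^\chi$ with $\mathcal Z=Z(A_i)\cap P^\chi$. This shows $P^\chi\subseteq A_{i+1}$, and avoids decomposing primes altogether.
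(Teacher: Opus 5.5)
Your argument is correct and is essentially the cited proof of \cite[Theorem B]{jones-primitive-ideals}. You reduce to non-splitting primes, then apply Theorem \ref{thm: control theorem using bounded automorphisms} to conjugation by $z\in Z_2(A_i)$ on $H=P^\chi\subseteq A_i$ with $\mathcal{Z}=Z(A_i)\cap H$, using Proposition \ref{propn: infinite codimension} for infinite codimension, and your final form of the iteration (tracking $P^\chi$ through the $A_i$ rather than descending to $P\cap KA_1$) is the right one.

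The abelian, finite-codimension case you flag is a real point that the paper's note passes over, and your character idea does settle it. Here $\varphi_z$ induces a $K$-automorphism of some finite order $m$ of the field $KH/Q$, which forces $(z,h)^m-1\in Q$ and hence $(z,h)=1$ by faithfulness and torsion-freeness.
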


\textbf{Note:} The statement of \cite[Theorem B]{jones-primitive-ideals} required $G$ to be nilpotent, but since we know that all prime ideals in $KG$ have infinite codimension by Proposition \ref{propn: infinite codimension}, the proof goes through without issue.

\begin{cor}\label{cor: class 2}
If $Z_2(G)=G$ then all faithful prime ideals of $KG$ are controlled by $Z(G)$.
\end{cor}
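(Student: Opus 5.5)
Corollary \ref{cor: class 2} follows from Theorem \ref{thm: faithful primes controlled by A(G)} once we show that $A(G)=Z(G)$ whenever $Z_2(G)=G$. Since every control statement there is monotone in the controlling subgroup (Lemma \ref{lem: controller subgroup}), it is enough to identify $A(G)$ exactly.

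First I compute $A_1$ from Definition \ref{defn: A-groups}. By definition
$$A_1=\mathbf{C}_{A_0}(Z_2(A_0))=\mathbf{C}_G(Z_2(G)).$$
The hypothesis $Z_2(G)=G$ then gives $A_1=\mathbf{C}_G(G)=Z(G)$. Because $Z(G)$ is abelian, $Z_2(A_1)=A_1$ and $\mathbf{C}_{A_1}(A_1)=A_1$. Hence $A_2=A_1$, the sequence is constant from this point on, and $A(G)=Z(G)$.

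Next I check the hypotheses of Theorem \ref{thm: faithful primes controlled by A(G)}. If $Z_2(G)=G$ then $G$ is nilpotent of class at most $2$, so in particular it is solvable. The theorem therefore says that every faithful prime ideal $P$ of $KG$ is controlled by $A(G)=Z(G)$, that is, $P=(P\cap KZ(G))KG$.

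One edge case needs a remark. The theorem relies, through Proposition \ref{propn: infinite codimension}, on $G$ being non-abelian. If $G$ is abelian, then $Z(G)=G$ and control by $G$ itself is trivial, so the statement holds anyway. The argument is a direct identification of $A(G)$ and I expect no real obstacle. The only points to watch are the indexing convention for $A_1$ and this abelian case.
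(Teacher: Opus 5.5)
Your proposal is correct and follows essentially the same route as the paper: the hypothesis gives $A_1=\mathbf{C}_G(G)=Z(G)$, the sequence stabilises there so $A(G)=Z(G)$, and Theorem \ref{thm: faithful primes controlled by A(G)} then gives control by $Z(G)$. Your extra remarks on solvability and on the abelian case are harmless additions that the paper leaves implicit.
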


\begin{proof}
By definition, $A_1:=\mathbf{C}_G(Z_2(G))=\mathbf{C}_G(G)=Z(G)$, so for all $i>1$, $$A_i=\mathbf{C}_{A_{i-1}}(Z_2(A_{i-1}))=\mathbf{C}_{Z(G)}(Z_2(Z(G)))=Z(G)$$ hence $A(G)=Z(G)$, so all faithful prime ideals of $KG$ are controlled by $Z(G)$ by Theorem \ref{thm: faithful primes controlled by A(G)}.\end{proof}

Unfortunately, the condition that $Z_2(G)=G$ is very restrictive, but there are few other examples known where $A(G)=Z(G)$. Indeed, the following result confirms that this is generally very false. 

\begin{lem}\label{Z_2 in A}
If $Z_2(G)$ is abelian and $Z_2(G)\neq Z(G)$, then $Z_2(G)\subseteq A(G)$ and $A(G)\neq Z(G)$.
\end{lem}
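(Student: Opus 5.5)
We argue by induction along the chain $G=A_0\supseteq A_1\supseteq\cdots$ of Definition \ref{defn: A-groups}. Write $Z:=Z(G)$ and $Z_2:=Z_2(G)$. The claim to establish is that $Z_2\subseteq A_i$ for every $i$. Taking $i$ large enough that $A_i=A(G)$ then gives $Z_2\subseteq A(G)$. The second assertion follows at once: if $A(G)=Z$, then $Z_2\subseteq Z$, hence $Z_2=Z$, which contradicts the hypothesis. So $A(G)\neq Z$.

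The base case $i=0$ is trivial. For $i=1$, we have $A_1=\mathbf{C}_G(Z_2)$. Since $Z_2$ is abelian, every element of $Z_2$ centralises $Z_2$, so $Z_2\subseteq A_1$.

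For the inductive step, assume $Z_2\subseteq A_i$; we need $Z_2\subseteq \mathbf{C}_{A_i}(Z_2(A_i))$. That is, we must show that each $z\in Z_2$ commutes with each $x\in Z_2(A_i)$. Here we cannot simply use that $Z_2$ is abelian, since $Z_2(A_i)$ may be larger than $Z_2$. The key step is therefore to show $Z_2(A_i)\subseteq \mathbf{C}_G(Z_2)$. For $i\geq 1$ this is immediate, since $Z_2(A_i)\subseteq A_i\subseteq A_1=\mathbf{C}_G(Z_2)$. So every element of $Z_2(A_i)$ commutes with all of $Z_2$, and therefore $Z_2\subseteq \mathbf{C}_{A_i}(Z_2(A_i))=A_{i+1}$. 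The only case not covered by this argument is $i=0$, which was handled directly above.

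There is therefore no real obstacle. The one point to watch is that the induction needs the containment $A_i\subseteq A_1$, which holds because the sequence is descending. Commutativity of $Z_2$ is used only at the first step, and the hypothesis $Z_2\neq Z$ is used only to conclude $A(G)\neq Z$.
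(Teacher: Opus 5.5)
Your proof is correct and follows essentially the paper's argument: the paper also inducts along the chain $A_0\supseteq A_1\supseteq\cdots$, phrasing the key point as $Z_2(G)\subseteq Z(A_1)$ and $Z(A_i)\subseteq Z(A_{i+1})$, which is the same observation as your $A_i\subseteq A_1=\mathbf{C}_G(Z_2(G))$ for $i\geq 1$. You also derive $A(G)\neq Z(G)$ explicitly from $Z(G)\subseteq Z_2(G)$, a step the paper leaves implicit.
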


\begin{proof}

Since $Z_2(G)$ is abelian, $Z_2(G)\subseteq A_1=C_G(Z_2(G))$, and in fact $Z_2(G)\subseteq Z(A_1)$. For any $i\geq 1$, $Z(A_i)\subseteq C_G(Z_2(A_i))=A_{i+1}$, and since $Z(A_i)$ commutes with everything in $A_i\supseteq A_{i+1}$, it follows that $Z(A_i)\subseteq Z(A_{i+1})$.

So suppose $i\geq 1$ and $Z_2(G)\subseteq Z(A_i)$, then $Z_2(G)\subseteq Z(A_{i+1})\subseteq A_i$. So we see by induction that $Z_2(G)\subseteq A_n$ for all $n$, and hence $Z_2(G)\subseteq A(G)$.\end{proof}

Since $Z_2(G)$ is abelian for many nilpotent groups of interest, we would like a stronger control theorem.

\subsection{New Control Theorem: The subgroup \(B(G)\)}\label{subsec: B(G)}

Let $\mathcal{S}$ be the set of all subgroups $H$ of $G$ which satisfy the following property:
\begin{equation}\label{eqn: star}
\tag{$*$}
\text{For all } g\in G:\, \text{if } (g,(g,H)) = 1 \text{ then } (g,H) = 1.
\end{equation}
For example, any $H\leq Z(G)$ trivially satisfies \eqref{eqn: star}.

\begin{lem}\label{lem: there exists unique maximal B}
$\mathcal{S}$ contains a unique maximal element, i.e. among all subgroups $H$ of $G$ satisfying \eqref{eqn: star}, there is a unique maximal subgroup $B(G)$.
\end{lem}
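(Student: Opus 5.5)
The plan is to show that $\mathcal{S}$ is closed under taking the subgroup generated by an arbitrary family of its members. Then $B(G)$ can be defined outright as the subgroup generated by all elements of $\mathcal{S}$. No Zorn's lemma or Noetherian argument on isolated subgroups should be needed.

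Throughout, I read $(g,H)$ as the set $\{(g,h):h\in H\}$. So $(g,H)=1$ means exactly that $g$ centralises $H$, and $(g,(g,H))=1$ means that $g$ commutes with every commutator $(g,h)$, $h\in H$.

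\textbf{Step 1: $\mathcal{S}$ is nonempty.} The trivial subgroup (indeed any $H\leq Z(G)$) lies in $\mathcal{S}$, as already noted after \eqref{eqn: star}.

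\textbf{Step 2: joins stay in $\mathcal{S}$.} Let $\{H_i\}_{i\in I}$ be any family in $\mathcal{S}$ and let $J=\langle H_i : i\in I\rangle$. Suppose $g\in G$ satisfies $(g,(g,J))=1$.
\begin{itemize}
\item Since $H_i\subseteq J$, we have $(g,H_i)\subseteq (g,J)$, and hence $(g,(g,H_i))=1$ for each $i$.
\item Because $H_i$ satisfies \eqref{eqn: star}, this gives $(g,H_i)=1$, so $g$ centralises every $H_i$.
\item The centraliser $\mathbf{C}_G(g)$ is a subgroup containing every $H_i$, so it contains $J$, i.e. $(g,J)=1$.
\end{itemize}
Thus $J\in\mathcal{S}$.

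\textbf{Step 3: define $B(G)$.} Set $B(G):=\langle H : H\in\mathcal{S}\rangle$. By Step 2, $B(G)\in\mathcal{S}$, and it contains every member of $\mathcal{S}$. Hence it is the unique maximal element of $\mathcal{S}$: any maximal $H\in\mathcal{S}$ satisfies $H\subseteq B(G)\in\mathcal{S}$, which forces $H=B(G)$.

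If one also wants $B(G)$ closed, the same argument works for the closure. Centralisers $\mathbf{C}_G(g)$ are closed, so the last bullet of Step 2 still applies to $\overline{J}$, giving $\overline{J}\in\mathcal{S}$; maximality then forces $B(G)=\overline{B(G)}$.

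The only point needing care is the interpretation of $(g,H)$. If it were read as the subgroup generated by these commutators rather than the set, the conditions would not change: $g$ commutes with a set if and only if it commutes with the subgroup it generates, and that subgroup is trivial if and only if the set is. So no real obstacle is expected.
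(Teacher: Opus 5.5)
Your proof is correct, and it uses the same key observation as the paper. If $(g,(g,J))=1$ for a join $J$ of members of $\mathcal{S}$, then $g$ satisfies the hypothesis of \eqref{eqn: star} for each constituent. It therefore centralises each constituent, and hence centralises $J$. The difference is in how you deploy this. The paper obtains a maximal element by Zorn's lemma, leaving implicit the check that the union of a chain in $\mathcal{S}$ stays in $\mathcal{S}$. It then proves uniqueness by showing $H_1,H_2\in\mathcal{S}\Rightarrow H_1H_2\in\mathcal{S}$ via the identity in Lemma \ref{lem: commutators}. Strictly, $H_1H_2$ is a subgroup only when one factor normalises the other, so one should pass to $\langle H_1,H_2\rangle$ there. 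You instead prove closure under arbitrary joins, which lets you write $B(G)=\langle H : H\in\mathcal{S}\rangle$ directly as the greatest element of $\mathcal{S}$, with no appeal to Zorn's lemma. Using that $\mathbf{C}_G(g)$ is a subgroup also handles general joins without commutator identities or the product-set issue. So your route is a slightly more elementary and cleaner packaging of the paper's pairwise argument. Your closing remark on closedness is essentially the paper's proof of Lemma \ref{lem: properties of B}(i).
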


\begin{proof}
Firstly, $1\in \mathcal{S}$, so by Zorn's lemma, $\mathcal{S}$ must contain a maximal element $B$. To show that $B$ is unique, we will show that if $H_1, H_2\in \mathcal{S}$, then $H_1H_2\in \mathcal{S}$. 

Take $g\in G$: if $(g,(g,H_1H_2)) = 1$, then $(g,(g,H_1)) = (g,(g,H_2)) = 1$ as $H_1,H_2\subseteq H_1H_2$. But since $H_1, H_2\in \mathcal{S}$, this implies that $(g,H_1) = (g,H_2) = 1$, and hence $(g, H_1H_2) = 1$ by Lemma \ref{lem: commutators}.\end{proof}

\begin{lem}\label{lem: properties of B}
$ $

\begin{enumerate}[label=(\roman*)]
\item $B(G)$ is a closed subgroup of $G$.
\item $B(G)$ is characteristic in $G$.
\item $B(G)$ is isolated in $G$. 
\item $B(G)$ is contained in $A(G)$.
\end{enumerate}
\end{lem}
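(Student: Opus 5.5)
I would prove the four parts in the order (i), (ii), (iii), (iv). The first three use only the maximality and uniqueness of $B:=B(G)$ from Lemma \ref{lem: there exists unique maximal B}. In each case I show that a suitable enlargement of $B$ still satisfies \eqref{eqn: star}, and hence equals $B$. Part (iv) requires an induction along the chain $A_0\supseteq A_1\supseteq\cdots$ of Definition \ref{defn: A-groups}.

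For (i), let $\overline{B}$ be the closure of $B$, and take $g$ with $(g,(g,\overline{B}))=1$. Then in particular $(g,(g,B))=1$, so $(g,B)=1$. The map $h\mapsto (g,h)$ is continuous, so $(g,\overline{B})=1$. Hence $\overline{B}\in\mathcal{S}$, and $\overline{B}=B$ by maximality.

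For (ii), let $\alpha$ be an automorphism of $G$ and take $g$ with $(g,(g,\alpha(B)))=1$. Applying $\alpha^{-1}$ gives $(\alpha^{-1}g,(\alpha^{-1}g,B))=1$, so $(\alpha^{-1}g,B)=1$, and applying $\alpha$ gives $(g,\alpha(B))=1$. So $\alpha(B)\in\mathcal{S}$. Uniqueness gives $\alpha(B)\subseteq B$, and symmetrically $\alpha^{-1}(B)\subseteq B$.

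For (iii), let $\tilde B=\{x\in G: x^{p^n}\in B \text{ for some } n\}$. Because $B$ is normal and $G$ is $p$-valuable, this isolator is a closed subgroup; I would quote the standard fact for $p$-valuable groups here. Take $g$ with $(g,(g,\tilde B))=1$, so that $(g,B)=1$. For $x\in\tilde B$ with $x^{p^n}\in B$, this gives $({}^gx)^{p^n}=x^{p^n}$. Uniqueness of $p$-th roots (the Note after Definition \ref{defn: p-valuation}) then gives ${}^gx=x$. So $(g,\tilde B)=1$, hence $\tilde B\in\mathcal{S}$ and $\tilde B=B$. Notice that for this step the hypothesis $(g,(g,\tilde B))=1$ is barely needed: $(g,B)=1$ alone forces $g$ to centralise the isolator.

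For (iv), I would show by induction that $B\subseteq A_i$ for all $i$. The base case $B\subseteq A_0=G$ is trivial. Suppose $B\subseteq A_i$; I need $B$ to centralise $Z_2(A_i)$. Take $g\in Z_2(A_i)$. Then $(g,B)\subseteq(g,A_i)\subseteq Z(A_i)$. Since $g\in A_i$, $g$ commutes with $(g,B)$, that is, $(g,(g,B))=1$. Property \eqref{eqn: star} then gives $(g,B)=1$. Thus $B\subseteq\mathbf{C}_{A_i}(Z_2(A_i))=A_{i+1}$, and intersecting over all $i$ gives $B\subseteq A(G)$.

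The main subtlety is in (iii): one must check that the isolator of $B$ is genuinely a subgroup, so that maximality can be applied. I would take this from Lazard's theory of isolated normal subgroups in $p$-valuable groups, using (ii) to know that $B$ is normal. The remaining parts are formal.
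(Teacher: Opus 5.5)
Your proposal is correct and follows the paper's proof essentially step for step: you get closure from continuity of $h\mapsto(g,h)$, characteristicity from showing $\alpha(B)\in\mathcal{S}$, isolatedness by showing the isolator satisfies \eqref{eqn: star} via uniqueness of $p$-power roots, and $B(G)\subseteq A(G)$ by induction using that $g\in Z_2(A_i)$ commutes with $(g,B)\subseteq Z(A_i)$. The only cosmetic difference is that you define the isolator pointwise, whereas the paper uses a uniform exponent $n$ with $H^{p^n}\subseteq B(G)$; this changes nothing in the argument.
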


\begin{proof}
$ $

\begin{enumerate}[label=(\roman*)]
\item Fix some $g\in G$, and assume that $(g,(g,\overline{B(G)})) = 1$. Then $(g,(g,B(G))) = 1$ as $B(G)\subseteq \overline{B(G)}$, and hence $(g,B(G)) = 1$ by definition. But since conjugation by $g$ is a continuous endomorphism of $G$, and elements of $\overline{B(G)}$ are limits of sequences in $B(G)$, we must also have $(g,\overline{B(G)}) = 1$. Hence $\overline{B(G)}$ also satisfies \eqref{eqn: star}, and so it must be equal to $B(G)$ by maximality.
\item For any automorphism $\varphi$ of $G$, the subgroup $\varphi(B(G))$ also satisfies \eqref{eqn: star}, and hence $\varphi(B(G))\subseteq B(G)$, so $B(G)\subseteq\varphi^{-1}(B(G))\subseteq B(G)$, forcing equality.
\item Let $H:=i_G(B(G))$ be the isolator of $B(G)$, i.e. $B(G)$ is open in $H$ and $H$ is a closed, isolated subgroup of $G$, and hence $H^{p^n}\subseteq B(G)$ for some $n\in\mathbb{N}$. 

Suppose $g\in G$ and $(g,(g,H))=1$. But we know that $B(G)\subseteq H$, so $(g,(g,B(G))=1$, and hence $(g,B(G))=1$ by \eqref{eqn: star}. This implies that $(g,h^{p^n})=1$ for all $h\in H$, i.e. $(ghg^{-1})^{p^n} = h^{p^n}$. But in a $p$-valuable group, this implies that $ghg^{-1} = h$ \cite[Ch. III, 2.1.4]{Laz65}, so that $(g, h) = 1$. Hence $(g,H)=1$ and $H$ also satisfies \eqref{eqn: star}. Thus we conclude that $B(G)\subseteq H\subseteq B(G)$, so $B(G)=H$ is isolated in $G$.
\item By definition, $B(G) \subseteq A_0 = G$. Suppose for induction that $B(G)\subseteq A_i$ for some $i\in\mathbb{N}$, and let $g\in Z_2(A_i)$ be arbitrary. Then $(g,(g,B(G))) \subseteq (g,(g,A_i)) = 1$, and so by definition $(g,B(G)) = 1$. In other words, $B(G)$ is a subgroup of $A_i$ that centralises every $g\in Z_2(A_i)$, and so $B(G) \subseteq \mathbf{C}_{A_i}(Z_2(A_i)) = A_{i+1}$.\qedhere
\end{enumerate}
\end{proof}

\begin{lem}\label{lem: B of open subgroup}
If $U$ is an open, normal subgroup of $G$ then $B(U)\subseteq B(G)$.
\end{lem}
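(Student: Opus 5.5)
The plan is to show directly that $H:=B(U)$, viewed as a subgroup of $G$, satisfies \eqref{eqn: star} with respect to the whole of $G$. Then $H\in\mathcal{S}$, and maximality in Lemma \ref{lem: there exists unique maximal B} gives $H\subseteq B(G)$. The tool is that high $p$-power powers of any element of $G$ lie in $U$, where \eqref{eqn: star} is already known, combined with torsion-freeness of $G$.

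Fix $g\in G$ with $(g,(g,H))=1$, and fix $h\in H$. I will use the convention $(g,h)={}^g h\cdot h^{-1}$. The same argument works for the other convention up to replacing $g$ by $g^{-1}$, and the property ``$g$ commutes with $c$'' does not depend on that choice.

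\begin{enumerate}[label=\arabic*.]
\item Set $c:=(g,h)$, so ${}^g h=ch$. By hypothesis $g$ commutes with $c$, so ${}^g c=c$.
\item By induction on $k$, ${}^{g^k}h=c^k h$ for all $k\geq 1$. The inductive step is ${}^{g^{k+1}}h={}^g(c^k h)=({}^g c)^k\,{}^g h=c^k\cdot ch$.
\item Since $U$ is open in $G$, which is pro-$p$, the quotient $G/U$ is a finite $p$-group. Hence there is $n$ (independent of $g$ and $h$) with $x:=g^{p^n}\in U$.
\item By step 2, $(x,h)=c^{p^n}$. This element commutes with $x$, because $c$ commutes with $g$.
\item Since $h\in H$ was arbitrary, we get $(x,(x,H))=1$ with $x\in U$. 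As $H=B(U)$ satisfies \eqref{eqn: star} inside $U$, it follows that $(x,H)=1$.
\item In particular $c^{p^n}=1$. Since $G$ carries a $p$-valuation it is torsion-free, so $c=1$, i.e.\ $(g,h)=1$.
\end{enumerate}

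As $h\in H$ was arbitrary, $(g,H)=1$, so $H\in\mathcal{S}$ for $G$ and therefore $B(U)\subseteq B(G)$.

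The only delicate point is step 2, the computation ${}^{g^k}h=c^k h$. It rests on $c$ being fixed by conjugation by $g$, which is exactly what the hypothesis $(g,(g,h))=1$ provides. Note also that $(x,h)$ must be computed elementwise, since $(g,H)$ need not be a subgroup. Normality of $U$ is not really needed beyond ensuring that some power of $g$ lies in $U$, which already follows from openness.
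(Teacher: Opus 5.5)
Your proposal is correct and is essentially the paper's proof. Both show that $B(U)$ satisfies \eqref{eqn: star} in $G$ by passing to $g^{p^n}\in U$, using that $g$ commutes with $(g,h)$ to get $(g^{p^n},h)=(g,h)^{p^n}$, applying \eqref{eqn: star} in $U$, and finally recovering $(g,h)=1$ from $(g,h)^{p^n}=1$; you do this last step via torsion-freeness, the paper via \cite[Ch. III, 2.1.4]{Laz65}, and your induction ${}^{g^k}h=c^kh$ simply unpacks Lemma \ref{lem: commutators}(ii).
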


\begin{proof}

We only need to show that $B(U)$ satisfies \eqref{eqn: star}, and it will follow from the definition of $B(G)$ that $B(U)\subseteq B(G)$. 

Firstly, since $U$ is open in $G$, there must exist $n\in\mathbb{N}$ such that $G^{p^n}\subseteq U$. So if $g\in G$ and $(g,(g,B(U)))=1$, then $g$ commutes with $(g,b)$ for every $b\in B(U)$, and hence $(g^m,b) = (g,b)^m$ for all $m$ by Lemma \ref{lem: commutators}. In particular, $g$ commutes with $(g^{p^n},b)$, and so $(g^{p^n},(g^{p^n},B(U))) = 1$.

But $g^{p^n}\in U$ so applying \eqref{eqn: star} for $B(U)$ we see that $(g^{p^n},B(U))=1$, and hence $(g,B(U))=1$ by \cite[Ch. III, 2.1.4]{Laz65}.\end{proof}

\begin{thm}\label{thm: faithful primes are controlled by B}
If $G$ is nilpotent and $P$ is a faithful prime ideal of $KG$, then $P$ is controlled by $B(G)$.
\end{thm}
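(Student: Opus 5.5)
We argue by induction on $\dim G$, following the pattern of Proposition \ref{propn: from Z2 to Z}. By Properties \ref{props: non-splitting right ideals}(ii), it suffices to show that every faithful virtually non-splitting right ideal $I=PKG$ of $KG$ is controlled by $B(G)$. Here $P$ is a faithful non-splitting prime of $KU$ and $U$ is an open normal subgroup of $G$. If $P$ is controlled by $B(U)$, then Lemma \ref{lem: B of open subgroup} gives $B(U)\subseteq B(G)$, and therefore $I=(P\cap KB(U))KU\cdot KG$ is controlled by $B(G)$. So we reduce to the following claim: every faithful non-splitting prime $P$ of $KG$ (with $G$ nilpotent, replacing $U$ by $G$) is controlled by $B(G)$. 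Equivalently, by Lemma \ref{lem: controller subgroup}, $P^\chi\subseteq B(G)$.

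Set $H:=P^\chi$, a closed normal subgroup. Since $P$ is non-splitting, Properties \ref{props: non-splitting right ideals}(i) shows that $Q:=P\cap KH$ is prime. It is faithful, since $Q^\dagger\subseteq P^\dagger=1$, and $Q^\chi=H$, so $Q$ is not controlled by any proper open subgroup of $H$. Our aim is to show that $H$ satisfies \eqref{eqn: star}, since maximality of $B(G)$ then gives $H\subseteq B(G)$. So fix $g\in G$ with $(g,(g,H))=1$, and let $\varphi_g$ be conjugation by $g$ restricted to $H$. This is a bounded automorphism of $H$ (boundedness of inner automorphisms of $G$ restricts to $H$), and $\varphi_g(Q)=Q$ because $P$ is a two-sided ideal. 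Define $\mathcal{Z}$ to be the closure of the isolator of the subgroup generated by $(g,H)$ and $Z(H)\cap C_H(g)$; more simply, take $\mathcal{Z}=C_H(g)\cap Z(H)$. We want to apply Theorem \ref{thm: control theorem using bounded automorphisms} to $H$, $Q$, $\varphi_g$ and a central subgroup $\mathcal{Z}$ of $H$ that contains $(g,H)$. Then $\varphi_g$ acts trivially on $H/\mathcal{Z}$, because $\varphi_g(h)h^{-1}=(g,h)\in\mathcal{Z}$, and it acts trivially on $\mathcal{Z}$ by the hypothesis $(g,(g,H))=1$. By Proposition \ref{propn: infinite codimension}, $Q$ has infinite codimension when $H$ is non-abelian. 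If $H$ is abelian, we instead use the codimension in $KH$ directly, or handle the finite-codimension case separately. The conclusion is that if $\varphi_g$ were nontrivial, $Q$ would be controlled by a proper open subgroup of $H$, a contradiction. Hence $(g,H)=1$, which is \eqref{eqn: star}.

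The main obstacle is ensuring that $(g,H)$ lies in a subgroup $\mathcal{Z}$ that is \emph{central in $H$}, as the theorem requires. The hypothesis only says that $(g,H)$ commutes with $g$, not with $H$. I would try to get around this using the induction hypothesis. Applying the inductive statement to $Q$ in the smaller group $H$ (when $H\neq G$) gives that $Q$ is controlled by $B(H)$, so $H=B(H)$. I then hope to exploit nilpotency: take $\mathcal{Z}$ to be the last nontrivial term intersecting $(g,H)$. Alternatively, replace $g$ by an element chosen so that $(g,H)\subseteq Z(H)$, working up the upper central series of $H$ relative to $g$. Concretely, I would consider the filtration $H\cap Z_i(\langle g\rangle H)$ and show by descending induction that $g$ centralises each layer, each time using Theorem \ref{thm: control theorem using bounded automorphisms} with $\mathcal{Z}$ central. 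Separately, the case $H=G$ (where induction does not apply) must be handled via Theorem \ref{thm: faithful primes controlled by A(G)}, which gives $H\subseteq A(G)$, an abelian group by Lemma \ref{lem: properties of A}. Once $H$ is abelian, every subgroup of $H$ is central in $H$, so we may take $\mathcal{Z}=H\cap C_G(g)$, which contains $(g,H)$, and the argument above applies cleanly. This suggests a simpler route that avoids induction altogether: first use Theorem \ref{thm: faithful primes controlled by A(G)} to get $P^\chi\subseteq A(G)$ abelian, then run the $\varphi_g$ argument with $\mathcal{Z}=P^\chi\cap C_G(g)$. The remaining care is the infinite-codimension hypothesis for abelian $H$, which fails only in degenerate cases ruled out by faithfulness.
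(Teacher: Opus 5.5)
Your final ``simpler route'' is the paper's proof. You reduce to faithful virtually non-splitting right ideals via Properties~\ref{props: non-splitting right ideals}(ii) and Lemma~\ref{lem: B of open subgroup}. You use Theorem~\ref{thm: faithful primes controlled by A(G)} and Lemma~\ref{lem: properties of A} to make $H=P^\chi$ abelian. Then you apply Theorem~\ref{thm: control theorem using bounded automorphisms} to $Q=P\cap KH$, to conjugation by $g$, and to $\mathcal{Z}=\mathbf{C}_H(g)$, which is central in $H$ and contains $(g,H)$. The inductive detour, and the worry about centrality of $\mathcal{Z}$, are unnecessary once you know $H\subseteq A(G)$.

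The one step you justify incorrectly is the last one. Faithfulness does \emph{not} rule out $\dim_K KH/Q<\infty$ for abelian $H$. For example, take $H=\overline{\langle\gamma\rangle}\cong\mathbb{Z}_p$: the kernel of the map $KH\to K$ sending $\gamma\mapsto 1+p$ is a faithful prime of codimension $1$. Proposition~\ref{propn: infinite codimension} says nothing about abelian groups. The paper's proof also applies Theorem~\ref{thm: control theorem using bounded automorphisms} here without checking this hypothesis.

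The case is easy to close directly:
\begin{enumerate}
\item If $F=KH/Q$ is finite-dimensional, it is a finite field extension of $K$, and $\varphi$ induces a $K$-automorphism of $F$.
\item Since $\mathrm{Aut}_K(F)$ is finite, there is $n\geq 1$ with $\varphi^n(r)-r\in Q$ for all $r\in KH$.
\item For $h\in H$ this gives $\varphi^n(h)h^{-1}-1\in Q$, so $\varphi^n(h)h^{-1}\in Q^\dagger=1$.
\item Since $\varphi$ fixes $(g,h)$, we have $\varphi^n(h)=(g,h)^nh$. Hence $(g,h)^n=1$, and so $(g,h)=1$ because $H$ is torsion-free.
\end{enumerate}
So $(g,H)=1$ in this case as well, and $H$ satisfies \eqref{eqn: star}.
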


\begin{proof}
Suppose first that $P$ is non-splitting, and let $H = P^\chi$. Set $Q = P\cap KH$, a faithful prime ideal of $KH$ by Properties \ref{props: non-splitting right ideals}.

We know that $H\subseteq A(G)$ by Theorem \ref{thm: faithful primes controlled by A(G)}, so since $G$ is nilpotent, $H$ is abelian by Lemma \ref{lem: properties of A}. Suppose for contradiction that $H\not\subseteq B(G)$: then there exists some $g\in G$ satisfying $(g,(g,H)) = 1$ but not $(g,H) = 1$. Fix such a $g\in G$, and let $\varphi: H\to H$ be the automorphism given by conjugation by $g$. Then there must exist some $h\in H$ such that $\varphi(h) \neq h$. 

On the other hand, for all $h\in H$, we have that $\varphi(h)h^{-1} = (g,h)$ commutes with $g$, and so $\varphi$ fixes $\varphi(h)h^{-1}$. Now applying Theorem \ref{thm: control theorem using bounded automorphisms} to the subgroup of $\varphi$-invariants in $H = Z(H)$ shows that $Q$ is controlled by a proper subgroup of $H$, and hence $P$ is controlled by a proper subgroup of $H=P^\chi$, contradicting Lemma \ref{lem: controller subgroup}.

Now suppose instead that $I$ is a faithful, virtually non-splitting right ideal of $KG$. Then let $U$ be an open subgroup of $G$ such that $I = PKG$ for some faithful non-splitting prime ideal $P$ of $KU$. Then $P$ is controlled by $B(U)$, i.e. $P=(P\cap KB(U))KU$. Thus 
\begin{align*}
I &= PKG\\
&= (P\cap KB(U))KUKG\\
&\subseteq (P\cap KB(U))KG\\
&\subseteq (I\cap KB(U))KG,
\end{align*}

But $B(U)\subseteq B(G)$ by Lemma \ref{lem: B of open subgroup}, so it follows that $I$ is controlled by $B(G)$. Applying Properties \ref{props: non-splitting right ideals}, we deduce that every faithful prime ideal of $KG$ is controlled by $B(G)$ as required.\end{proof}

\subsection{Lie theoretic interpretation}\label{subsec: Lie theory}

For now, let $\mathfrak{g}$ be any finite dimensional $\mathbb{Q}_p$-Lie algebra. Similar to the property \eqref{eqn: star} in \S \ref{subsec: B(G)}, we define the following property of ideals $\mathfrak{a}$ of $\mathfrak{g}$.
\begin{equation}\label{eqn: 2star}
\tag{$**$}
\text{for all } x\in \mathfrak{g}:\, \text{if } [x,[x,\mathfrak{a}]] = 0 \text{ then } [x,\mathfrak{a}] = 0.
\end{equation}
Also similar to \S \ref{subsec: B(G)}, we define $B(\mathfrak{g})$ to be the maximal ideal of $\mathfrak{g}$ satisfying \eqref{eqn: 2star}. It is very straightforward to prove that $B(\mathfrak{g})$ is uniquely defined, and that it contains $Z(\mathfrak{g})$.

Now, fix a nilpotent $p$-valuable group $G$, and let $\mathfrak{g}:=\mathfrak{g}_G$ be the Lie algebra of $G$.

\begin{propn}\label{propn: B(G) to B(g)}
For any closed, isolated normal subgroup $H$ of $G$, $H$ satisfies \eqref{eqn: star} if and only if $\mathfrak{h}:=\mathfrak{g}_H$ satisfies \eqref{eqn: 2star}. In particular, $B(\mathfrak{g})=\mathfrak{g}_{B(G)}$.
\end{propn}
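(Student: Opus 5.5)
Since $B(G)$ and the condition \eqref{eqn: star} depend only on commutators, the plan is to translate both sides of \eqref{eqn: star} into Lie-theoretic statements via Lazard's correspondence. First we reduce to the case where $G$ is $p$-saturable. Replacing $G$ by $Sat(G)$ and $H$ by its isolator does not change the Lie algebras. The conditions "$(g,H)=1$" and "$(g,(g,H))=1$" are also stable under passing to powers $g^{p^n}$ and to open subgroups of $H$, by the argument of Lemma \ref{lem: properties of B}(iii) and Lemma \ref{lem: B of open subgroup} using \cite[Ch. III, 2.1.4]{Laz65}. So we may work inside $\log(Sat(G))$, where $\mathfrak{h}$ is an ideal of $\mathfrak{g}$ by Lemma \ref{lem: isolated subgroups to ideals}. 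The key dictionary entries are two facts. First, for $g\in G$ with $x=\log g$, $(g,H)=1$ if and only if $[x,\mathfrak{h}]=0$. Second, $(g,(g,H))=1$ if and only if $[x,[x,\mathfrak{h}]]=0$.

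For the first fact, $g$ centralises $H$ exactly when the automorphism $\mathrm{Ad}(g)=\exp(\mathrm{ad}\,x)$ acts trivially on $\mathfrak{h}$. Since $\mathrm{ad}\,x$ is nilpotent ($\mathfrak g$ being nilpotent), $\exp(\mathrm{ad}\,x)=1$ on $\mathfrak h$ if and only if $\mathrm{ad}\,x=\log\exp(\mathrm{ad}\,x)=0$ on $\mathfrak h$. For the second fact, note that $(g,(g,h))=1$ for all $h\in H$ means $g$ centralises every element $(g,h)=\mathrm{Ad}(g)(h)h^{-1}$. I expect this to be the main obstacle, because the set $\{(g,h)\}$ is not obviously a subgroup and its logarithms are not simply $(\mathrm{Ad}(g)-1)\mathfrak{h}$. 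To handle it, set $N=\mathrm{ad}\,x|_{\mathfrak h}$ and $\psi=\exp N$.

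In one direction, suppose $N^2=0$. Then $\psi-1=N$, and the image $N\mathfrak{h}$ lies in $\ker N$, which is an abelian-ish piece commuting with $x$. Write $h=\exp y$; Baker--Campbell--Hausdorff gives $\log(\psi(h)h^{-1})=\mathrm{BCH}(y+Ny,-y)$. This equals $Ny$ plus brackets of $y$ with $Ny$, and all of these are killed by $\mathrm{ad}\,x$ because $[x,[y,Ny]]=[Ny,Ny]+[y,N^2y]=0$, and inductively the same for higher brackets. So $g$ commutes with each $(g,h)$.

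In the other direction, suppose $g$ centralises all $(g,h)$. Then $\psi$ fixes each $\psi(h)h^{-1}$, so $(\psi-1)$ maps $H$ into the fixed subgroup $H^\psi$, and $\psi(h)=c\,h$ with $\psi(c)=c$. Iterating gives $\psi^n(h)=c^n h$, an identity valid for $n\in\mathbb{Z}_p$. Hence the one-parameter family $\psi^t=\exp(tN)$ satisfies $\exp(tN)\cdot y=\mathrm{BCH}(t\log c,y)$. Comparing the coefficients of $t^2$ (both sides are polynomial in $t$ by nilpotency), the left side gives $\tfrac12N^2y$. On the right side, every term at least quadratic in $t\log c$ involves two copies of $\log c$, which commute, and such terms vanish after using $N\log c=0$. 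Carrying this out should yield $N^2y=0$.

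Alternatively, and more cleanly, we could first note that $H\subseteq A(G)$ can be assumed abelian in the case of interest. In general, though, the BCH comparison above is what I would attempt. Having established both dictionary entries, \eqref{eqn: star} for $H$ is equivalent to \eqref{eqn: 2star} for $\mathfrak h$, as $x$ ranges over $\log(G)$. This set spans $\mathfrak{g}$ up to scaling by $\mathbb{Q}_p$, and both conditions are invariant under $x\mapsto\lambda x$. Finally, since $B(G)$ is closed, isolated and normal by Lemma \ref{lem: properties of B}, and the maximal objects correspond under the inclusion-preserving bijection of Lemma \ref{lem: isolated subgroups to ideals}, we obtain $B(\mathfrak g)=\mathfrak g_{B(G)}$.
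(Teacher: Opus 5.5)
\textbf{The direction \eqref{eqn: star}$\Rightarrow$\eqref{eqn: 2star} has a real gap.} This is the half of your second dictionary entry asserting that $N^2=0$ forces $g$ to commute with every $(g,h)$. You claim, inductively, that $\mathrm{ad}\,x$ kills all brackets of $y$ and $Ny$ occurring in $\mathrm{BCH}(y+Ny,-y)$. This fails already in degree $3$. Since $\mathrm{ad}\,x$ is a derivation with $y\mapsto Ny\mapsto 0$, it sends $[y,[y,Ny]]$ to $[Ny,[y,Ny]]$, which need not vanish.

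In fact the implication itself is false for non-abelian $\mathfrak h$. Let $\mathfrak h$ be free nilpotent of class $3$ on $y,z$, with basis $y,z,w=[y,z],u=[y,w],v=[z,w]$. Let $x$ act by the derivation $D$ with $Dy=z$, $Du=v$ and $Dz=Dw=Dv=0$. Then $\mathfrak g=\mathbb{Q}_p x\ltimes\mathfrak h$ is nilpotent, $\mathfrak h$ is an ideal, and $D^2=0$. However, for $g=\exp(x)$ and $h=\exp(y)$ one computes $\log\,(g,h)=\mathrm{BCH}(y+z,-y)=z+\tfrac12 w+\tfrac1{12}(2u+v)$, so $[x,\log\,(g,h)]=\tfrac16 v\neq 0$. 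Rescaling $x,y$ by a power of $p$ places this inside a genuine saturable group. So this half of the dictionary cannot be proved for general $H$.

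What saves the proposition is exactly the remark you made and then set aside. In this direction $H$ satisfies \eqref{eqn: star}, so $H\subseteq B(G)\subseteq A(G)$ by Lemma \ref{lem: properties of B}(iv). Moreover $A(G)$ is abelian because $G$ is nilpotent (Lemma \ref{lem: properties of A}). With $\mathfrak h$ abelian, all higher BCH terms vanish and $\log\,(g,h)=Ny$, which $\mathrm{ad}\,x$ kills. This is precisely the paper's computation $hXh^{-1}=X-[X,Y]$. It has to be the argument, not an optional alternative.

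\textbf{The other half is true, but your argument for it is incomplete.} This is the half you need for \eqref{eqn: 2star}$\Rightarrow$\eqref{eqn: star}, and it holds in general. Your $t^2$-comparison, however, does not establish it. The terms of $\mathrm{BCH}(tC,y)$ quadratic in $C=\log c$, such as $\tfrac1{12}[C,[C,y]]$, are not made to vanish by $[C,C]=0$ and $NC=0$. For a single fixed $h$, the comparison only expresses $\tfrac12 N^2y$ in terms of higher brackets like $[C,[C,y]]$ with $C=Ny+\cdots$.

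The missing step is to apply the hypothesis to all powers $h^s$, $s\in\mathbb{Z}_p$. Then $N\log\,(g,h^s)=0$, where $\log\,(g,h^s)=\mathrm{BCH}(se^Ny,-sy)$ is polynomial in $s$. Its coefficient of $s$ is $(e^N-1)y$, so $N(e^N-1)y=0$ for all $y\in\mathfrak h$. Since $e^N-1=Nu$ with $u$ unipotent and commuting with $N$, this gives $N^2=0$ on $\mathfrak h$. This linearisation in $h$ is what the paper achieves via $[\log g,\log h]=\log\lim_n (g^{p^n},h^{p^n})^{p^{-2n}}$, and it needs no abelianness.

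With these two repairs, the rest of your outline matches the paper's proof: the first dictionary entry, the reduction to $Sat(G)$, and the identification $B(\mathfrak g)=\mathfrak g_{B(G)}$ via Lemma \ref{lem: isolated subgroups to ideals}.
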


\begin{proof}

We know by Lemma \ref{lem: isolated subgroups to ideals} that the map $H\mapsto\mathfrak{g}_H$ is an inclusion preserving bijection between closed, isolated normal subgroups of $G$ and ideals in $\mathfrak{g}$. Thus it suffices to prove the first statement, because since  $B(G)$ is closed, isolated and normal in $G$ by Lemma \ref{lem: properties of B}, and all subgroups satisfying \eqref{eqn: star} are contained in $B(G)$, it will follow that $\mathfrak{g}_{B(G)}$ satisfies \eqref{eqn: 2star}. Moreover, for any ideal $\mathfrak{h}$ satisfying \eqref{eqn: 2star}, $\mathfrak{h}=\mathfrak{g}_H$ for a normal subgroup $H$ satisfying \eqref{eqn: star}, and hence $H\subseteq B(G)$ so $\mathfrak{h}\subseteq \mathfrak{g}_{B(G)}$. In other words, $\mathfrak{g}_{B(G)}=B(\mathfrak{g})$.

So, we want to prove that for any closed, isolated normal subgroup $H$, $H$ satisfies \eqref{eqn: star} if and only if $\mathfrak{g}_H$ satisfies \eqref{eqn: 2star}. Let us assume first that $\mathfrak{g}_H$ satisfies \eqref{eqn: 2star}, and we are given $g\in G$ such that $(g,(g,H))=1$. Fixing any $h\in H$, this means that $g$ commutes with $(g,h^{p^n})$, and hence with $(g^{p^n},h^{p^n})=(g,h^{p^n})^{p^n}$ for all $n\in\mathbb{N}$ by Lemma \ref{lem: commutators}. But we know that $$[\log(g),\log(h)]=\log(\underset{n\rightarrow\infty}{\lim}(g^{p^n},h^{p^n})^{p^{-2n}})$$ and $g$ must commute with $(g^{p^n},h^{p^n})^{p^{-2n}}\in Sat(G)$ for all $n\in\mathbb{N}$. Hence $g$ commutes with $[\log(g),\log(h)]$, so $[\log(g),[\log(g),\log(h)]]=0$. Since this is true for all $h\in H$, it follows that $[\log(g),[\log(g),\log(H)]]=0$, and since $\mathfrak{h}$ is spanned by $\log(H)$, this means that $[\log(g),[\log(g),\mathfrak{h}]]=0$.

Applying \eqref{eqn: 2star}, we see that $[\log(g),\mathfrak{h}]=0$, so $\log(g)$ commutes with everything in $\log(H)\subseteq\mathfrak{h}$, and it follows immediately that $g=\exp(\log(g))$ commutes with everything in $H$, i.e. $(g,H)=1$ and thus $H$ satisfies \eqref{eqn: star}.

Conversely, if $H$ satisfies \eqref{eqn: star}, then $H\subseteq B(G)\subseteq A(G)$ is abelian by Lemma \ref{lem: properties of A}. So set $\mathfrak{h}:=\mathfrak{g}_H$, and $\mathfrak{h}$ is an abelian ideal of $\mathfrak{g}$. Suppose that $[x,[x,\mathfrak{h}]]=0$ for some $x\in\mathfrak{g}$, then choose $k\in\mathbb{N}$ such that $X:=p^kx\in\log(H)$, and let $g:=\exp(X)\in G$. Since $[X,[X,\log(H)]]=0$ we know that $g$ commutes with $[X,\log(H)]$.

Fixing $h\in H$, we want to prove that $g$ commutes with $(g,h)=ghg^{-1}h^{-1}$, and it suffices of course to prove that $g$ commutes with $hgh^{-1}$. But $$hgh^{-1}=h\exp(X)h^{-1}=\exp(hXh^{-1})$$ so we only need to show that $g$ commutes with $hXh^{-1}$. Writing $h=\exp(Y)=\underset{n\geq 0}{\sum}\frac{Y^n}{n!}$ and $h^{-1}=\exp(-Y)=\underset{m\geq 0}{\sum}\frac{(-1)^mY^m}{m!}$, we can write $hXh^{-1}=\underset{n,m\geq 0}{\sum}\frac{(-1)^{m}}{n!m!}Y^nXY^{m}$.

Now, since $\mathfrak{h}$ is abelian, $Y,[X,Y]\in\mathfrak{h}$ must commute. Thus an easy induction on $m$ shows that $Y^nXY^m=mY^{n+m-1}[X,Y]+Y^{n+m}X$, so 
\begin{align*}
hXh^{-1}&=\underset{n,m\geq 0}{\sum}\frac{(-1)^{m}}{n!m!}mY^{n+m-1}[X,Y]+\underset{n,m\geq 0}{\sum}\frac{(-1)^{m}}{n!m!}Y^{n+m}X\\&=\underset{n,m\geq 0}{\sum}\frac{(-1)^{m+1}}{n!m!}Y^{n+m}[X,Y]+\underset{n,m\geq 0}{\sum}\frac{(-1)^{m}}{n!m!}Y^{n+m}X\text{ (substituting }m-1\text{ with }m\text{ in the first sum)}\\&=\left(\underset{n,m\geq 0}{\sum}\frac{(-1)^{m}}{n!m!}Y^{n+m}\right)(X-[X,Y])
\end{align*}

But $\underset{n,m\geq 0}{\sum}\frac{(-1)^{m}}{n!m!}Y^{n+m}=hh^{-1}=1$, so $hXh^{-1}=X-[X,Y]$, so since $[X,[X,Y]]=0$, it follows that $X$ commutes with $hXh^{-1}$, and hence $g=\exp(X)$ commutes with $hXh^{-1}$ as required.

Thus we conclude that $(g,(g,h))=1$ for all $h\in H$, so applying \eqref{eqn: star} we deduce that $(g,H)=1$. So since $g$ commutes with everything in $H$, $X:=\log(g)$ commutes with everything in $\log(H)$, and hence $[x,\log(H)]=[p^{-n}X,\log(H)]=0$, so $[x,\mathfrak{h}]=0$ and $\mathfrak{h}$ satisfies \eqref{eqn: 2star}.\end{proof}

Moreover, since we know that $Z(\mathfrak{g})=\mathfrak{g}_{Z(G)}$, we deduce the following useful consequence of Theorem \ref{thm: faithful primes are controlled by B}.

\begin{cor}\label{cor: when B(G)=Z(G)}
If $G$ is a nilpotent $p$-valuable group and $B(\mathfrak{g}_G)=Z(\mathfrak{g}_G)$, then all faithful prime ideals of $KG$ are controlled by $Z(G)$.    
\end{cor}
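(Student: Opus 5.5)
The plan is to deduce the corollary directly from Theorem \ref{thm: faithful primes are controlled by B} by showing that the hypothesis $B(\mathfrak{g}_G)=Z(\mathfrak{g}_G)$ forces $B(G)=Z(G)$. Once that is done, any faithful prime ideal $P$ of $KG$ is controlled by $B(G)=Z(G)$, which is the claim.

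To show $B(G)=Z(G)$, we use Proposition \ref{propn: B(G) to B(g)}, which gives $\mathfrak{g}_{B(G)}=B(\mathfrak{g}_G)$. The hypothesis then yields $\mathfrak{g}_{B(G)}=Z(\mathfrak{g}_G)=\mathfrak{g}_{Z(G)}$, where the last equality is the identification of the centre recalled just before the corollary. Both $B(G)$ and $Z(G)$ are closed, isolated normal subgroups of $G$. For $B(G)$ this is Lemma \ref{lem: properties of B}(i)--(iii), since characteristic subgroups are normal. For $Z(G)$, closedness and normality are clear, and isolatedness follows because $g^{p^n}$ central implies $(g^{p^n},h)=1$ for all $h$, hence $(ghg^{-1})^{p^n}=h^{p^n}$ and so $(g,h)=1$ by \cite[Ch. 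III, 2.1.4]{Laz65}. By the injectivity of the bijection in Lemma \ref{lem: isolated subgroups to ideals}, $\mathfrak{g}_{B(G)}=\mathfrak{g}_{Z(G)}$ implies $B(G)=Z(G)$.

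The only step needing any care is confirming that both subgroups lie in the domain of the bijection of Lemma \ref{lem: isolated subgroups to ideals}, i.e.\ that $Z(G)$ is isolated. This is the Lazard uniqueness-of-$p$th-roots argument above. Alternatively, one can avoid it: $Z(G)\subseteq B(G)$ holds trivially because central subgroups satisfy \eqref{eqn: star}, and $B(G)$ is isolated. Then $B(G)$ is contained in the isolator of $Z(G)$, which has the same Lie algebra $\mathfrak{g}_{Z(G)}$, so $Z(G)$ is open in $B(G)$. The $p$-th root argument then shows that every element of $B(G)$ commutes with all of $G$, giving equality.
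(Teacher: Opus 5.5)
Your proposal is correct and follows the paper's proof: Proposition \ref{propn: B(G) to B(g)} gives $\mathfrak{g}_{B(G)}=B(\mathfrak{g}_G)=Z(\mathfrak{g}_G)=\mathfrak{g}_{Z(G)}$, injectivity in Lemma \ref{lem: isolated subgroups to ideals} (both subgroups being closed, isolated and normal) yields $B(G)=Z(G)$, and Theorem \ref{thm: faithful primes are controlled by B} finishes. Your justification that $Z(G)$ is isolated, which the paper merely asserts, is a fine addition. The displayed identity in it has $g$ and $h$ swapped, however: from $(g^{p^n},h)=1$ one gets $(hgh^{-1})^{p^n}=g^{p^n}$, not $(ghg^{-1})^{p^n}=h^{p^n}$. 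The conclusion $(g,h)=1$ is unaffected.
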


\begin{proof}
Setting $\mathfrak{g}=\mathfrak{g}_G$, we know that $B(\mathfrak{g})=\mathfrak{g}_{B(G)}$ by Proposition \ref{propn: B(G) to B(g)}. But $B(G)$ is a closed, isolated normal subgroup of $G$ by Lemma \ref{lem: properties of B}, and so is $Z(G)$. So since $\mathfrak{g}_{Z(G)}=Z(\mathfrak{g})=B(\mathfrak{g})=\mathfrak{g}_{B(G)}$ and the assignment $H\mapsto\mathfrak{g}_H$ is injective for closed, isolated normal subgroups by Lemma \ref{lem: isolated subgroups to ideals}, it follows that $B(G)=Z(G)$. Using Theorem \ref{thm: faithful primes are controlled by B}, we deduce that all faithful prime ideals of $KG$ are controlled by $Z(G)$.\end{proof}

In \S \ref{sec: examples} below, we will use the Lie algebra to calculate $A(G)$ and $B(G)$ explicitly for some $p$-valuable nilpotent groups, and explore why this new control theorem is an improvement.

\section{Examples}\label{sec: examples}

Throughout this section, we will fix $\mathfrak{g}$ a finite dimensional, nilpotent, $\mathbb{Q}_p$-Lie algebra. We want to calculate $B(\mathfrak{g})$ for various examples.

\subsection{Positive root bases}

To first outline a possible method to calculating $B(\mathfrak{g})$, we want to focus on a particular class of examples.

\begin{defn}\label{defn: positive root basis}
Given a basis $\{x_i:i\in I\}$ for $\mathfrak{g}$, we say that this is a \emph{positive root basis} if for every pair $i,j\in I$, there exists $k:=k(i,j)\in I$ such that $[x_i,x_j]\in\mathbb{Q}_px_k$. 
\end{defn}

\textbf{Note:} If $[x_i,x_j]=0$, we may take $k(i,j)$ to be any element of $I$. For convenience, we will usually take $k(i,j)=i$ in this case. If $[x_i,x_j]\neq 0$ then $k(i,j)$ is uniquely defined.\\

\begin{ex}
The key examples of nilpotent Lie algebras carrying positive root bases come from Lie theory. Recall, if $\mathfrak{s}$ is a split semisimple Lie algebra over $\mathbb{Q}_p$, then the standard Cartan decomposition gives us $$\mathfrak{s}=\mathfrak{h}\oplus\underset{\alpha\in\Phi}\bigoplus\mathfrak{s}_{\alpha}$$ where $\mathfrak{h}$ is a Cartan subalgebra, $\Phi\subseteq\mathfrak{h}^\ast$ is the root system of $\mathfrak{s}$, and $\mathfrak{s}_{\alpha}$ is the one dimensional $\alpha$-eigenspace of the action of $\mathfrak{h}$ on $\mathfrak{s}$. Note that for all $\alpha,\beta\in\Phi$, $$[\mathfrak{s}_{\alpha},s_{\mathfrak{\beta}}]=\begin{cases} \mathfrak{s}_{\alpha+\beta} & \alpha+\beta\in\Phi\\ 0 & \text{otherwise}\end{cases}$$

Decompose $\Phi=\Phi^+\sqcup\Phi^-$, where $\Phi^+$ (resp. $\Phi^{-}$) consist of the positive (resp. negative) roots in $\Phi$ with respect to some ordering, and let $\mathfrak{n}^+:=\underset{\alpha\in\Phi^+}{\bigoplus}\mathfrak{s}_{\alpha}$ (resp. $\mathfrak{n}^-:=\underset{\alpha\in\Phi^-}{\bigoplus}\mathfrak{s}_{\alpha}$). Then $\mathfrak{n}^+,\mathfrak{n}^-$ are nilpotent subalgebras of $\mathfrak{s}$, and we may write $\mathfrak{s}=\mathfrak{n}^-\oplus\mathfrak{h}\oplus\mathfrak{n}^+$.


Let $\mathfrak{g}:=\mathfrak{n}^+$, and for each $\alpha\in\Phi^+$ let $x_{\alpha}$ be a generator of the one dimensional space $\mathfrak{s}_{\alpha}$. Then $\{x_\alpha:\alpha\in\Phi^+\}$ is a positive root basis for $\mathfrak{g}$ (hence the name), where the element $k(\alpha,\beta)$ from Definition \ref{defn: positive root basis} is defined by $$k(\alpha,\beta)=\begin{cases}
\alpha+\beta & \alpha+\beta\in\Phi\\ \alpha & \text{otherwise}
\end{cases}$$
\end{ex}

\

The following technical result provides a method of calculating $B(\mathfrak{g})$ whenever $\mathfrak{g}$ has a positive root basis.

\begin{propn}\label{propn: calculating B(g) using root bases}
Let $\{x_i:i\in I\}$ be a positive root basis for $\mathfrak{g}$, and suppose $B(\mathfrak{g})\subseteq\text{ Span}\{x_j:j\in J\}$ for some subset $J\subseteq I$. Suppose further that
\begin{itemize}
\item there exists $i\in I$ such that $[x_i,[x_i,x_j]]=0$ for all $j\in J$.

\item There exists a subset $\varnothing\neq S\subseteq J$, maximal with respect to the property that $[x_i,x_m]\neq 0$ for all $m\in S$.

\item $k(i,m)\neq k(i,m')$ if $m,m'\in S$ and $m\neq m'$.
\end{itemize}
Then there exists a proper subset $T$ of $J$ such that $B(\mathfrak{g})\subseteq\text{ Span}\{x_j:j\in T\}$.
\end{propn}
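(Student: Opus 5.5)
Write $\mathfrak{b}:=B(\mathfrak{g})$ and $x:=x_i$. The plan is to take $T:=J\setminus S$; since $S\neq\varnothing$ this is a proper subset of $J$. It then suffices to show that every $y\in\mathfrak{b}$ has zero coefficient on each $x_m$ with $m\in S$.

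First I use the hypothesis $[x_i,[x_i,x_j]]=0$ for all $j\in J$. Since $\mathfrak{b}\subseteq\mathrm{Span}\{x_j:j\in J\}$, linearity gives $[x,[x,\mathfrak{b}]]=0$. Now $\mathfrak{b}$ is an ideal satisfying \eqref{eqn: 2star}, so we get $[x,\mathfrak{b}]=0$.

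Next, take $y=\sum_{j\in J}\lambda_jx_j\in\mathfrak{b}$. Then
$$0=[x_i,y]=\sum_{j\in J}\lambda_j[x_i,x_j]=\sum_{m\in S}\lambda_m[x_i,x_m]+\sum_{j\in J\setminus S}\lambda_j[x_i,x_j].$$
Maximality of $S$ means that $S$ is exactly the set of $j\in J$ with $[x_i,x_j]\neq0$: if some $j\in J\setminus S$ had $[x_i,x_j]\neq0$, then $S\cup\{j\}$ would still have the property, contradicting maximality. So the second sum vanishes. For each $m\in S$ we have $[x_i,x_m]=c_mx_{k(i,m)}$ with $c_m\neq0$, because the basis is a positive root basis and the bracket is nonzero. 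Hence $0=\sum_{m\in S}\lambda_mc_mx_{k(i,m)}$.

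By the third hypothesis the indices $k(i,m)$ for $m\in S$ are pairwise distinct. The corresponding $x_{k(i,m)}$ are therefore distinct members of a basis and hence linearly independent. This forces $\lambda_mc_m=0$, so $\lambda_m=0$ for all $m\in S$. Thus $y\in\mathrm{Span}\{x_j:j\in T\}$, which is what we want.

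The only delicate point is the interpretation of maximality of $S$: it must be read as "$S$ is all of $\{m\in J:[x_i,x_m]\neq0\}$". This is what kills the second sum, and it follows from maximality as argued above. The rest is routine linear algebra together with the defining property \eqref{eqn: 2star} of $B(\mathfrak{g})$.
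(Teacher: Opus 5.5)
Your proposal is correct and follows the paper's proof essentially step for step. You apply \eqref{eqn: 2star} to get $[x_i,B(\mathfrak{g})]=0$, use maximality of $S$ to kill the terms with $j\in J\setminus S$, use the distinct indices $k(i,m)$ and linear independence to force $\lambda_m=0$ for $m\in S$, and take $T=J\setminus S$. Your explicit check that maximality means $S=\{m\in J:[x_i,x_m]\neq 0\}$ is a step the paper uses without comment.
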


\begin{proof}
For each $m\in S$, write $[x_i,x_m]=\mu_{i,m}x_{k(i,m)}$, with $\mu_{i,m}\neq 0$. Since $[x_i,[x_i,x_j]]=0$ for all $j\in J$ and $B(\mathfrak{g})\subseteq\text{ Span}\{x_j:j\in J\}$, we know that $[x_i,[x_i,B(\mathfrak{g})]]=0$, so $[x_i,B(\mathfrak{g})]=0$ by \eqref{eqn: 2star}.

Moreover, by maximality of $S$, $[x_i,x_j]=0$ for all $j\in J$ with $j\notin S$. So if $x\in B(\mathfrak{g})$ and $x=\underset{j\in J}{\sum}\lambda_j x_j$, then $$0=[x_i,x]=\underset{j\in J}{\sum}\lambda_j [x_i,x_j]=\underset{m\in S}{\sum}\lambda_m [x_i,x_m]=\underset{m\in S}{\sum}\lambda_m\mu_{i,m}x_{k(i,m)}$$ so since $k(i,m)\neq k(i,m')$ if $m\neq m'$, we see using linear independence that $\lambda_m\mu_{i,m}=0$ for all $m\in S$ and hence $\lambda_m=0$. It follows that if $T:=J\backslash S$ then $x=\underset{j\in T}{\sum}\lambda_j x_j$. Since this is true for all $x\in B(\mathfrak{g})$, we deduce that $B(\mathfrak{g})\subseteq\text{ Span}\{x_j:j\in T\}$, and since $S$ is non-empty by assumption, $T$ is a proper subset of $J$.\end{proof}

Of course, the difficulty in utilising this result lies in finding the element $i\in I$ and the subset $S\subseteq J$, and this issue must be dealt with separately for each example.

\subsection{Positive root subalgebras}

Again, let $\mathfrak{s}$ be a split semisimple Lie algebra over $\mathbb{Q}_p$, with Cartan subalgebra $\mathfrak{h}$, root system $\Phi$ and decomposition $\mathfrak{s}=\mathfrak{n}^+\oplus\mathfrak{h}\oplus\mathfrak{n}^-$.

Let $\mathfrak{g}:=\mathfrak{n}^+$, and let $\{x_{\alpha}:\alpha\in\Phi^+\}$ be the associated positive root basis for $\mathfrak{g}$.


Now, recall that the positive system $\Phi^+$ contains a unique highest root $\theta$ (with respect to the Bruhat ordering), and the centre of $\mathfrak{g}=\mathfrak{n}^+$ is the one dimensional eigenspace $\mathfrak{s}_{\theta}$. The following technical results allow us to apply Proposition \ref{propn: calculating B(g) using root bases} to root bases of this form.

\begin{lem}\label{lem: eliminating root systems}
Suppose $\Phi$ is an irreducible root system, not of type $C$, and $\Psi\subseteq \Phi^+$ is any subset with $\theta\in\Psi$ but $\Psi\neq\{\theta\}$. Then there exists $\alpha\in\Phi^+$ such that $2\alpha+\beta\notin\Phi$ for all $\beta\in\Psi$, but $\alpha+\gamma\in\Phi^+$ for \emph{some} $\gamma\in\Psi$.
\end{lem}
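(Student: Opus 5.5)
The plan is to separate the simply-laced types from the non-simply-laced ones, because the condition $2\alpha+\beta\in\Phi$ can only hold when $\alpha$ is a short root in a system with two root lengths. Recall that for $\alpha\neq\pm\beta$ the $\alpha$-string through $\beta$ has length $|\langle\beta,\alpha^\vee\rangle|+1$ when $\beta$ is an endpoint. Hence both $\alpha+\beta$ and $2\alpha+\beta$ can be roots only if $|\langle \beta',\alpha^\vee\rangle|\geq 2$ for some root $\beta'$ in that string. This forces $\alpha$ to be short and the ratio of squared lengths to be $2$ or $3$. The cases $\beta=\pm\alpha$ are harmless: $3\alpha,\alpha\notin\Phi$ respectively for $2\alpha+\beta$ with $\beta$ positive.

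\textbf{Step 1 (existence of a partner).} Fix $\gamma\in\Psi\setminus\{\theta\}$, which exists since $\Psi\neq\{\theta\}$. I will use the standard fact that if $\gamma\in\Phi^+$ is not the highest root, then some simple root $\alpha_i$ satisfies $\gamma+\alpha_i\in\Phi^+$. This holds because otherwise $\gamma$ would be a dominant-type maximal element and hence equal to $\theta$. In types $A$, $D$, $E$, every root has the same length, so by the string argument $2\alpha+\beta\notin\Phi$ for every positive $\alpha$ and every $\beta\in\Phi^+$. Thus $\alpha:=\alpha_i$ works, which settles the simply-laced case immediately.

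\textbf{Step 2 (types $B_n$, $F_4$, $G_2$).} If $\alpha$ is long, then $|\langle\beta,\alpha^\vee\rangle|\le 1$ for $\beta\neq\pm\alpha$, so $2\alpha+\beta\notin\Phi$ automatically. It therefore suffices to find a long positive $\alpha$ and some $\gamma\in\Psi$ with $\alpha+\gamma\in\Phi^+$. To keep control over $\Psi$, I will choose $\gamma\in\Psi\setminus\{\theta\}$ of maximal height and try to find a long $\alpha$ with $\gamma+\alpha$ a root. This fails only for a few specific $\gamma$. In $B_n$ with roots $e_i\pm e_j$ and $e_i$, it fails essentially for the highest short root $\gamma=e_1$. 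There I will fall back to a short $\alpha$, for instance $\alpha=e_2$ (giving $e_1+e_2=\theta$), and check directly which $\beta\in\Phi^+$ give $2\alpha+\beta\in\Phi$. For $\alpha=e_2$ the only such positive $\beta$ is $e_1-e_2$. If $e_1-e_2\in\Psi$, I will instead use $\gamma'=e_1-e_2$, which has a long partner $\alpha=e_2+e_3$ (for $n\ge 3$), or handle $n=2$ by hand. For $F_4$ and $G_2$ I will carry out the same procedure using explicit lists of positive roots. In $G_2$ there are only six positive roots, so the check is finite. In $F_4$ I will use the standard coordinates, together with the fact that the long roots form a $D_4$ subsystem, to limit the exceptions to the short roots near the top of the poset.

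The main obstacle is Step 2. The choice of $\alpha$ genuinely has to depend on $\Psi$, and one must verify that every problematic $\Psi$ can be handled by switching between a long partner and a short partner. I expect this to reduce to finitely many explicit checks near the top of the root poset in each non-simply-laced type. I would also double-check why the same scheme breaks in type $C$: there $\theta=2e_1$ is long, and the long roots $2e_i$ have no long partners. This explains why the lemma excludes type $C$.
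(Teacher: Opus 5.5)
Your argument is correct in outline and takes a genuinely different route from the paper. The paper works type by type in explicit coordinates. For $A$, $D$ and $E_8$ (with $E_6,E_7\subseteq E_8$) it verifies by hand that $2\alpha+\beta\notin\Phi$ for all $\alpha,\beta\in\Phi^+$, and then quotes \cite[Lemma 3.2]{root} for a partner of some $\gamma\in\Psi\setminus\{\theta\}$. For $B_n$, $F_4$ and $G_2$ it locates the bad pairs by inspecting coordinates. It then splits according to whether $\Psi\setminus\{\theta\}$ contains a long root (it writes down a long partner) or not (it shows no $\beta\in\Psi$ is bad for any $\alpha$).

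Your root-string observation is that $2\alpha+\beta\in\Phi$ with $\alpha,\beta\in\Phi^+$ forces $\alpha$ to be short. This disposes of all simply-laced types, including the messy $E_8$ computation, in one line. It also explains why only $B$, $F$, $G$ need attention, and reduces them to asking which $\gamma$ have a long partner. The paper buys a fully written-out check; your route buys uniformity and a transparent reason why type $C$ fails. Organise Step 2 as follows: if some element of $\Psi\setminus\{\theta\}$ has a long partner you are done; otherwise $\Psi\setminus\{\theta\}$ lies in the finite set of roots with no long partner. Choosing $\gamma$ of maximal height is unnecessary.

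Two points need attention. First, $F_4$ and $G_2$ are only announced, though they do work out. Use Bourbaki coordinates for $F_4$: positive roots $e_i$, $e_i\pm e_j$ with $i<j$, and $\frac12(e_1\pm e_2\pm e_3\pm e_4)$, with $\theta=e_1+e_2$. Every long $\gamma\neq\theta$ has a long partner inside the $D_4$ subsystem. The only short roots with no long partner are $e_1$ and $\frac12(e_1+e_2+e_3\pm e_4)$.

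To finish $F_4$, use that in a doubly-laced system, for short $\alpha$, one has $2\alpha+\beta\in\Phi$ if and only if $\beta$ is long with $\langle\beta,\alpha^\vee\rangle=-2$. If $\Psi\setminus\{\theta\}$ consists only of these exceptional short roots, the only long element of $\Psi$ is the dominant root $\theta$. Hence any short partner works: $e_2$ for $e_1$, $\frac12(e_1-e_2-e_3-e_4)$ for $\frac12(e_1+e_2+e_3+e_4)$, and $e_4$ for $\frac12(e_1+e_2+e_3-e_4)$. The same observation streamlines your $B_n$ case, where the only exception is $e_1$.

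In $G_2$ this shortcut fails because of strings of length $4$; for instance $2\delta+(\delta+\varepsilon)=3\delta+\varepsilon$ with $\delta+\varepsilon$ short. So check directly, in the paper's notation. The roots with no long partner are $\delta+\varepsilon$ and $2\delta+\varepsilon$. If $\delta+\varepsilon\in\Psi$, take $\alpha=2\delta+\varepsilon$, for which no positive $\beta$ is bad. If $\Psi=\{\theta,2\delta+\varepsilon\}$, take $\alpha=\delta$.

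Second, delete ``handle $n=2$ by hand''. $B_2=C_2$ is excluded by hypothesis, and the lemma is actually false there: $\Psi=\{e_1+e_2,e_1,e_1-e_2\}$ admits no valid $\alpha$. That is exactly why your fallback needs $e_3$. Finally, a small slip: the case $\beta=-\alpha$ is excluded because $\beta$ is positive, not because $\alpha\notin\Phi$.
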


\begin{proof}

As we will show, for many irreducible root systems $\Phi$, we can choose a positive system $\Phi^+$ such that for all $\alpha,\beta\in\Phi^+$, $2\alpha+\beta\notin\Phi$. More generally, it would suffice to prove that for all $\alpha\in\Phi^+$, $\beta\in\Psi$, $2\alpha+\beta\notin\Psi$, because in this case, since $\theta$ is the unique highest root, it follows from \cite[Lemma 3.2]{root} that for all $\gamma\in\Psi$ with $\gamma\neq\theta$, there exists $\alpha\in\Phi^+$ such that $\alpha+\gamma\in\Phi$. Since we are assuming $\Psi\neq\{\theta\}$ this completes the proof.

However, there are examples where there do exist pairs $(\alpha,\beta)$ of positive roots such that $2\alpha+\beta\in\Phi$, so we will need to treat these cases separately. We will examine and eliminate each irreducible root system in turn, and we begin each example by realising each positive system as a set of real vectors. In all the examples below, $\{e_i:i=1,\dots,n\}$ represents the standard basis for $\mathbb{R}^n$.

\begin{enumerate}

    \item Type $A_n$: We take $\Phi$ to be the set of all integer vectors in $\mathbb{R}^{n+1}$ of length $\sqrt{2}$ whose entries sum to 0. We can take $\Phi^+=\{e_i-e_j:1\leq i<j\leq n\}$.

    For every $\alpha,\beta\in\Phi^+$, writing $\alpha=e_i-e_{j}$, $\beta=e_k-e_\ell$, $2\alpha+\beta=2e_i-2e_j+e_k-e_\ell$, which can only lie in $\Phi^+$ if $i=\ell$ and $j=k$, implying that $\ell>k$, which is impossible. So $2\alpha+\beta\notin\Phi^+$ and the result follows as outlined above.\\

    \item Type $B_n$: Since we are assuming that $\Phi$ does not have type $C$, we know that it does not have type $B_2=C_2$, so we may assume that $n>2$. We can take $\Phi$ to be the set of integer vectors in $\mathbb{R}^n$ of length 1 or $\sqrt{2}$, and $$\Phi^+=\{e_i-e_j,e_k,e_\ell+e_m:i,j,n,k,\ell,m\leq n, i<j,\ell\neq m\}$$

    In this case, it is possible that $2\alpha+\beta\in\Phi^+$ for $\alpha,\beta\in\Phi^+$, e.g. $\alpha=e_2$, $\beta=e_1-e_2$, so we must look more carefully. However, if $\alpha=e_i-e_j$ for some $i<j$, then $2\alpha+\beta$ has either $e_i$ coefficient greater than 1, or $e_j$ coefficient less than $-1$, so $2\alpha+\beta\notin\Phi^+$. Similarly, if $\alpha=e_k+e_\ell$ for $k\neq \ell$, then $2\alpha+\beta$ has either $e_k$ or $e_\ell$ coefficient greater than 1, so $2\alpha+\beta\notin\Phi^+$. So if $2\alpha+\beta\in\Phi$ then $\alpha=e_k$ for some $k$.
    
    If $\Psi$ contains a root of the form $\gamma=e_i-e_j$ with $i<j$, then if $j<n$, take $\alpha=e_j+e_n$ and $\alpha+\gamma=e_i-e_n\in\Phi^+$, and if $j=n$ then take $\alpha=e_\ell+e_n$ for some $i\neq\ell<n$ (which we know exists since $n>2$), and $\alpha+\gamma=e_i+e_\ell\in\Phi^+$. In either case, since $2\alpha+\beta\notin\Phi$, for all $\beta\in\Psi$, the result follows. 
    
    Therefore, we may assume that $\Psi$ does not contain a root of the form $e_i-e_j$, so every root in $\Psi$ has the form $e_k$ or $e_\ell+e_m$. Therefore, if $\alpha=e_k$ then $2\alpha+\beta$ has $e_k$ coefficient at least 2 for every $\beta\in\Psi$, so $2\alpha+\beta\notin\Phi$. So $2\alpha+\beta\notin\Phi$ for all $\alpha\in\Phi^+$, $\beta\in\Psi$, which completes the proof by the argument outlined above.\\

    \item Type $D_n$: We take $\Phi$ to be the set of integer vectors in $\mathbb{R}^n$, $n\geq 4$, of length $\sqrt{2}$, and $$\Phi^+=\{e_i-e_j,e_\ell+e_m:i,j,\ell,m\leq n,i<j,\ell\neq m\}$$

    Again, for all $\alpha,\beta\in\Phi^+$, if $\alpha=e_i\pm e_j,\beta=e_k\pm e_\ell$, then $2\alpha+\beta=2e_i\pm 2e_j+e_k\pm e_\ell$. So this can only lie in $\Phi$ if $\alpha=e_i-e_j$ and $\beta=e_j-e_i$, which cannot both lie in $\Phi^+$. Therefore, $2\alpha+\beta\notin\Phi$ for all $\alpha,\beta\in\Phi^+$, which completes the proof.\\

    \item Type $E_8$: We take $\Phi$ to be the set of vectors in $\mathbb{R}^8$ of length $\sqrt{2}$, whose coordinates are either all integers or all half integers, and which sum to an even integer. Take
    \begin{align*}
        \Phi^+=&\{e_i-e_j,e_\ell+e_m,-(e_k+e_8):i<j\leq 8,\ell<m\leq 7,k\leq 7\}\\&\bigsqcup\left\{\frac{1}{2}\underset{1\leq i\leq 8}{\sum}(-1)^{n_i}e_i:n_8=1,\underset{1\leq i\leq 8}{\sum}n_i\in 4\mathbb{Z}\right\}
    \end{align*}

    Given $\alpha,\beta\in\Phi^+$, the $e_8$ coefficients of $\alpha$ and $\beta$ can be 0,$-\frac{1}{2}$ or $-1$. If the $e_8$ coefficient of $\alpha$ is $-1$ then the corresponding coefficient of $2\alpha+\beta$ is $-2,-2.5$ or $-3$, so $2\alpha+\beta\notin\Phi^+$. \\
    
    If the $e_8$ coefficient of $\alpha$ is $-\frac{1}{2}$, then $$\alpha=\frac{1}{2}\underset{1\leq i\leq 8}{\sum}(-1)^{n_i}e_i$$ so $2\alpha=\underset{1\leq i\leq 8}{\sum}(-1)^{n_i}e_i$ has $e_i$ coefficient $-1$. So if $2\alpha+\beta\in\Phi^+$ then the $e_8$ coefficient of $\beta$ is zero, so $\beta=\pm e_i\pm e_j$ for some $i\neq j$. Therefore, $2\alpha+\beta$ has at least 5 non-zero $e_k$-coefficients, and these coefficients are not $\pm\frac{1}{2}$, so $2\alpha+\beta\notin\Phi^+$ -- contradiction.\\

    Finally, if $\alpha$ has $e_8$ coefficient 0, then $2\alpha=\pm2e_k\pm2e_\ell$, so for any $\beta$ of the form $\frac{1}{2}\underset{1\leq i\leq 8}{\sum}(-1)^{n_i}e_i$, $2\alpha+\beta$ has $e_k$-entry either $-2.5,-1.5,1.5$ or $2.5$, so $2\alpha+\beta\notin\Phi$. Therefore, we may assume that $\alpha$ and $\beta$ have $e_8$ coefficient 0, and hence they lie in a positive system of type $D_7$, so the previous argument shows that $2\alpha+\beta\notin\Phi^+$ as required.

    \item Type $E_6$ and $E_7$: In both these cases, we can realise $\Phi$ as a subset of the root system $\Phi'$ of type $E_8$, and we can ensure $\Phi^+\subseteq\Phi'^+$, i.e. $\Phi^+$ is the set all the roots in $\Phi'^+$ that lie in $\Phi$. When considering $E_8$, we showed that for all $\alpha,\beta\in\Phi'^+$, $2\alpha+\beta\notin\Phi'$, so the same must hold for all $\alpha,\beta\in\Phi^+$.

    \item Type $F_4$: We can take $\Phi$ to be the set of all vectors in $\mathbb{R}^4$ of length 1 or $\sqrt{2}$ with either all entries integers or all entries half integers. Take
    \begin{align*}
    \Phi^+=&\{e_i-e_j,e_k,e_\ell+e_m,-(e_s+e_4),-e_4:i<j\leq 4,\ell<m\leq 3,s,k\leq 3\}\\&\bigsqcup\left\{\frac{1}{2}\underset{1\leq i\leq 4}{\sum}(-1)^{n_i}e_i:n_4=1\right\}
    \end{align*}

    Again, as with $B_n$, it is possible to find $\alpha,\beta\in\Phi^+$ such that $2\alpha+\beta\in\Phi$. However, by a similar argument to the case for $B_n$ and $E_8$, we can deduce that if $2\alpha+\beta\in\Phi$ then $\alpha\neq e_i-e_j,e_\ell+e_m$ or $-(e_s+e_4)$.

    
    Moreover, suppose there exists an element $\gamma$ of $\Psi$ of the form $\gamma=e_i-e_j,e_\ell+e_m,-(e_s+e_4)$ (assuming $\gamma\neq\theta= e_1-e_4$), then take $\alpha=e_1-e_i$ (or $e_j-e_4$ if $i=1$), $-(e_m+e_4)$ and $e_s+e_{s'}$ respectively (where $s\neq s'<4$), and we see that $2\alpha+\beta\notin\Phi$ for all $\beta\in\Psi$ but $\alpha+\gamma\in\Phi^+$ as required.

    Therefore, we can assume that for every $\beta\in\Psi$, $\beta=-e_4$, $e_1-e_4$, $e_k$ for some $k\leq 3$ or $\frac{1}{2}\underset{1\leq i\leq 4}{\sum}(-1)^{n_i}e_i$. From this description, it follows immediately that if $\alpha=e_k$ for some $k<4$ then $2\alpha+\beta\notin\Phi$ for all $\beta\in\Psi$.
    
    Also, if $\alpha=\frac{1}{2}\underset{1\leq i\leq 4}{\sum}(-1)^{m_i}e_i$ then $2\alpha+\beta$ has $e_4$-coefficient $-1.5,-2$ or $-2.5$ if $\beta\in\Psi$ and $\beta\neq e_k$ for some $k$, and if $\beta=e_k$ then $2\alpha+\beta$ is an integer vector with at least 3 non-zero entries. In either case, $2\alpha+\beta\notin\Phi$ for all $\beta\in\Psi$.

    Finally, if $\alpha=-e_4$ then $2\alpha+\beta$ has $e_4$ entry no more than $-2$ for all $\beta\in\Psi$, so $2\alpha+\beta\notin\Phi$, thus we conclude that for all $\alpha\in\Phi^+$, $\beta\in\Psi$, $2\alpha+\beta\notin\Phi^+$ as required.

    


    \item Type $G_2$: In this case, we can take $\Delta=\{\delta,\varepsilon\}\subseteq\mathbb{R}^2$ as a fundamental system, where $\delta=e_1$, and $\varepsilon$ makes an angle of $\frac{5\pi}{3}$ with $\delta$. Thus $\Phi^+$ consists of all vectors in $\Phi$ lying between $\delta$ and $\varepsilon$ in the plane, so we can write $$\Phi^+=\{\delta,\varepsilon,\varepsilon+\delta,3\delta+\varepsilon,2\delta+\varepsilon,3\delta+2\varepsilon\}$$ So if $\alpha,\beta\in\Phi^+$ and $2\alpha+\beta\in\Phi$, we must have that $(\alpha,\beta)\in\Phi^2$ is equal to $(\delta,\varepsilon),(\delta,\varepsilon+\delta)$ or $(\varepsilon+\delta,\delta)$. So we only need to eliminate the cases when $\Psi$ contains $\delta,\varepsilon$ or $\varepsilon+\delta$, in all other cases it follows that $2\alpha+\beta\notin\Phi$ for all $\alpha\in\Phi^+$, $\beta\in\Psi$.

    If $\varepsilon\in\Psi$, take $\alpha=3\delta+\varepsilon$, $\gamma=\varepsilon$. Then $2\alpha+\beta\notin\Phi$ for all $\beta\in\Psi$, but $\alpha+\gamma=3\delta+2\varepsilon\in\Phi$.
    
    If $\delta\in\Psi$ take $\alpha=\varepsilon$, $\gamma=\delta$. Then $2\alpha+\beta\notin\Phi$ for all $\beta\in\Psi$, but $\alpha+\gamma=\varepsilon+\delta\in\Phi^+$. 
    
    Finally, if $\varepsilon+\delta\in\Psi$, then take $\alpha=2\delta+\varepsilon$, $\gamma=\varepsilon+\delta$. Then $2\alpha+\beta\notin\Phi$ for all $\beta\in\Psi$, but $\alpha+\gamma=2\delta+\varepsilon+\varepsilon+\delta=3\delta+2\varepsilon\in\Phi^+$.\qedhere
    
\end{enumerate}

\end{proof}

\begin{propn}\label{propn: step 2 for irreducible root systems}
Suppose $\Phi$ is an irreducible root system, not of type $C$, and $\Psi\subseteq \Phi^+$ is any subset with $\theta\in\Psi$ but $\Psi\neq\{\theta\}$. Then there exists
\begin{itemize}
    \item $\alpha\in\Phi^+$ such that $[x_\alpha,[x_\alpha,x_\beta]]=0$ for all $\beta\in\Psi$.

    \item A non-empty subset $\Theta\subseteq\Psi$, maximal with respect to the property that $[x_\alpha,x_\beta]\neq 0$ for any $\beta\in\Theta$. 
    
    \item $k(\alpha,\beta)\neq k(\alpha,\gamma)$ for all $\beta,\beta'\in\Theta$ with $\beta\neq\beta'$.
\end{itemize}
\end{propn}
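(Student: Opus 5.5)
The plan is to take $\alpha$ to be the root provided by Lemma \ref{lem: eliminating root systems} and translate its root-theoretic conclusions into bracket statements, using the rule $[\mathfrak{s}_\alpha,\mathfrak{s}_\beta]=\mathfrak{s}_{\alpha+\beta}$ when $\alpha+\beta\in\Phi$ and $0$ otherwise. For this $\alpha$ we have $2\alpha+\beta\notin\Phi$ for every $\beta\in\Psi$, and $\alpha+\gamma\in\Phi^+$ for at least one $\gamma\in\Psi$.

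\textbf{First bullet.} Fix $\beta\in\Psi$.
\begin{itemize}
\item If $\alpha+\beta\notin\Phi$, then $[x_\alpha,x_\beta]=0$, so certainly $[x_\alpha,[x_\alpha,x_\beta]]=0$.
\item If $\alpha+\beta\in\Phi$, then $[x_\alpha,x_\beta]\in\mathfrak{s}_{\alpha+\beta}$. Hence $[x_\alpha,[x_\alpha,x_\beta]]\in[\mathfrak{s}_\alpha,\mathfrak{s}_{\alpha+\beta}]$. This is zero because $\alpha+(\alpha+\beta)=2\alpha+\beta\notin\Phi$.
\end{itemize}
One small point needs checking here. The vector $2\alpha+\beta$ is nonzero, since $\alpha,\beta$ are both positive, so it cannot be the weight $0$ of $\mathfrak{h}$. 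Therefore the bracket genuinely vanishes rather than landing in the Cartan subalgebra.

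\textbf{Second bullet.} Define $\Theta:=\{\beta\in\Psi:[x_\alpha,x_\beta]\neq 0\}$. Because this set is defined by the property itself, it is automatically the maximal subset of $\Psi$ with that property. It is non-empty because it contains $\gamma$: since $\alpha+\gamma\in\Phi$, the bracket $[x_\alpha,x_\gamma]$ is a nonzero element of $\mathfrak{s}_{\alpha+\gamma}$. This uses the standard fact that in a semisimple Lie algebra $[\mathfrak{s}_\alpha,\mathfrak{s}_\beta]=\mathfrak{s}_{\alpha+\beta}$ whenever $\alpha+\beta$ is a root, which is already recorded in the Example defining positive root bases.

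\textbf{Third bullet.} For every $\beta\in\Theta$ we have $[x_\alpha,x_\beta]\neq 0$, so $\alpha+\beta\in\Phi$ and therefore $k(\alpha,\beta)=\alpha+\beta$. If $\beta\neq\beta'$ lie in $\Theta$, then $\alpha+\beta\neq\alpha+\beta'$, so $k(\alpha,\beta)\neq k(\alpha,\beta')$. (The statement writes $\gamma$ where it means $\beta'$; this is just a typo.)

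The main obstacle is entirely contained in Lemma \ref{lem: eliminating root systems}, which we take as given. The present proof is a bookkeeping translation from root arithmetic to brackets, and the only care needed is the zero-weight check in the first bullet.
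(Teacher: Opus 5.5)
Your proof is correct and is essentially the paper's argument. Like the paper, you take $\alpha$ from Lemma \ref{lem: eliminating root systems}, turn $2\alpha+\beta\notin\Phi$ into the vanishing of $[x_\alpha,[x_\alpha,x_\beta]]$, use $\gamma$ to show $\Theta\neq\varnothing$, and use $k(\alpha,\beta)=\alpha+\beta$ on $\Theta$ to get distinctness. Your case split on whether $\alpha+\beta\in\Phi$, together with the check that $2\alpha+\beta\neq 0$, is a slightly more careful version of the paper's one-line ``if and only if'' claim.
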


\begin{proof}

Using Lemma \ref{lem: eliminating root systems}, we know that we can find $\alpha\in\Phi^+$ such that $2\alpha+\beta\notin\Phi$ for all $\beta\in\Psi$, but $\alpha+\gamma\in\Phi$ for some $\gamma\in\Psi$. Therefore, $[x_\alpha,x_\gamma]\neq 0$, and $[x_\alpha,[x_\alpha,x_\beta]]\neq 0$ if and only if $\alpha+(\alpha+\beta)=2\alpha+\beta\in\Phi$. Therefore, we conclude that $[x_\alpha,[x_\alpha,x_\beta]]=0$ for all $\beta\in\Psi$.

The second and third statements follow immediately, because if $\Theta\subseteq\Psi$ is any subset maximal with respect to the property that $[x_\alpha,x_\beta]\neq 0$ (and we know this is non-empty because $\{\gamma\}$ defines such a subset), then $k(\alpha,\beta)=\alpha+\beta\in\Phi^+$ for all $\beta\in\Theta$, so if $\beta\neq\beta'$ then $k(\alpha,\beta)=\alpha+\beta\neq\alpha+\beta'=k(\alpha,\beta')$.\end{proof}

\begin{rk}
The proof of the results above assume we are making a particular choice of positive system in $\Phi$. However, if we assume further that $\mathfrak{s}$ is algebraic, then up to isomorphism the nilpotent subalgebras $\mathfrak{n}^+$ and $\mathfrak{n}^-$ do not depend on the choice of positive system $\Phi^+$, since all Borel subalgebras of $\mathfrak{s}$ are conjugate under the action of the adjoint algebraic group. Therefore, since allowing the positive system to vary does not change $\mathfrak{g}$ (up to isomorphism), the following result does not depend on a fixed choice of $\Phi^+$.
\end{rk}

\begin{thm}\label{thm: B(g) for g nilpotent subalgebra}
If $\mathfrak{g}$ is the positive nilpotent subalgebra of a split semisimple algebraic Lie algebra $\mathfrak{s}$ over $\mathbb{Q}_p$, and no irreducible component of $\mathfrak{s}$ has type $C$, then $B(\mathfrak{g})=Z(\mathfrak{g})$.
\end{thm}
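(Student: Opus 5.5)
We argue by a finite descent on root supports, applying Proposition \ref{propn: calculating B(g) using root bases} repeatedly with the input supplied by Proposition \ref{propn: step 2 for irreducible root systems}. By the preceding Remark, we may fix the positive systems used in the proof of Lemma \ref{lem: eliminating root systems}. Write $\mathfrak{s}=\mathfrak{s}_1\oplus\dots\oplus\mathfrak{s}_r$ with each $\mathfrak{s}_t$ simple, split, and not of type $C$. Then $\Phi=\bigsqcup_t\Phi_t$ and $\Phi^+=\bigsqcup_t\Phi_t^+$, and
$$\mathfrak{g}=\bigoplus_t\mathfrak{n}_t^+,\qquad Z(\mathfrak{g})=\bigoplus_t\mathfrak{s}_{\theta_t}=\text{Span}\{x_{\theta_t}:1\leq t\leq r\},$$
where $\theta_t$ is the highest root of $\Phi_t$. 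We already know $Z(\mathfrak{g})\subseteq B(\mathfrak{g})$, so it suffices to prove the reverse inclusion $B(\mathfrak{g})\subseteq\text{Span}\{x_{\theta_t}\}$.

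\textbf{Descent.} Start with $J_0=\Phi^+$, so trivially $B(\mathfrak{g})\subseteq\text{Span}\{x_\beta:\beta\in J_0\}$. Suppose inductively that $B(\mathfrak{g})\subseteq\text{Span}\{x_\beta:\beta\in J\}$ for some $J\subseteq\Phi^+$ with $\theta_t\in J$ for every $t$, and that $J\neq\{\theta_1,\dots,\theta_r\}$. We produce a proper subset $T\subsetneq J$ that still contains every $\theta_t$ and still carries $B(\mathfrak{g})$ in its span. Since $J$ is finite, after finitely many steps we reach $J=\{\theta_1,\dots,\theta_r\}$, which gives $B(\mathfrak{g})\subseteq Z(\mathfrak{g})$.

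\textbf{One step.} Pick a component $t$ with $\Psi:=J\cap\Phi_t^+\neq\{\theta_t\}$; note $\theta_t\in\Psi$. Proposition \ref{propn: step 2 for irreducible root systems} (equivalently Lemma \ref{lem: eliminating root systems}) gives $\alpha\in\Phi_t^+$ such that $[x_\alpha,[x_\alpha,x_\beta]]=0$ for all $\beta\in\Psi$, and $[x_\alpha,x_\gamma]\neq0$ for some $\gamma\in\Psi$. Roots in different components are orthogonal and never sum to a root, so $[x_\alpha,x_\beta]=0$ for every $\beta\in J\setminus\Phi_t^+$. Consequently:
\begin{itemize}
\item the hypothesis $[x_\alpha,[x_\alpha,x_\beta]]=0$ of Proposition \ref{propn: calculating B(g) using root bases} holds for all $\beta\in J$;
\item the maximal subset $S\subseteq J$ of roots with $[x_\alpha,x_\beta]\neq0$ lies in $\Psi$, is exactly the set $\Theta$ of Proposition \ref{propn: step 2 for irreducible root systems}, and is nonempty because it contains $\gamma$;
\item for $\beta\in S$ we have $k(\alpha,\beta)=\alpha+\beta$, and these values are pairwise distinct.
\end{itemize}
Proposition \ref{propn: calculating B(g) using root bases} therefore gives $B(\mathfrak{g})\subseteq\text{Span}\{x_\beta:\beta\in T\}$ with $T=J\setminus S\subsetneq J$. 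Each $\theta_s$ survives into $T$: for $s\neq t$ it commutes with $x_\alpha$ because it lies in another component, and for $s=t$ we have $\alpha+\theta_t\notin\Phi$ since $\theta_t$ is the highest root. So $\theta_s\notin S$ for every $s$, and the inductive hypothesis is preserved.

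The only real difficulty is the root-system combinatorics, which is already handled by Lemma \ref{lem: eliminating root systems}. What remains is the bookkeeping for several components. The point to check is that the conditions of Proposition \ref{propn: calculating B(g) using root bases} concern the whole of $J$, and not just $\Psi$. This is resolved by the vanishing of brackets between distinct components, together with the observation that the highest roots are never removed.
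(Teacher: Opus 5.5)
Your proof is correct and follows the paper's argument. Both iterate Proposition \ref{propn: calculating B(g) using root bases}, with its input supplied by Proposition \ref{propn: step 2 for irreducible root systems}, to shrink the root support of $B(\mathfrak{g})$ down to the highest root. The paper phrases this as a contradiction with a minimal support set $\Psi$, and it first reduces to a single simple component via $B(\mathfrak{g}_1\oplus\mathfrak{g}_2)=B(\mathfrak{g}_1)\oplus B(\mathfrak{g}_2)$, which it states without proof as easy. You instead run the descent on the whole root basis, using that brackets between distinct simple components vanish; this is a harmless variant that makes that reduction step unnecessary.
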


\begin{proof}

Using \eqref{eqn: 2star}, it is easy to show that if $\mathfrak{g}=\mathfrak{g}_1\oplus\mathfrak{g}_2$ is a direct sum of two nilpotent Lie algebras, then $B(\mathfrak{g})=B(\mathfrak{g}_1)\oplus B(\mathfrak{g}_2)$ and $Z(\mathfrak{g})=Z(\mathfrak{g}_1)\oplus Z(\mathfrak{g}_2)$. So we may assume that $\mathfrak{g}$ is indecomposable, and hence the associated root system of $\mathfrak{s}$ is irreducible, and not of type $C$.

We know that $Z(\mathfrak{g})=\mathfrak{s}_\theta$, where $\theta$ is the highest root in $\Phi^+$, and we also know that $Z(\mathfrak{g})\subseteq B(\mathfrak{g})$. Choose a subset $\Psi\subseteq \Phi^+$ such that $B(\mathfrak{g})\subseteq\underset{\alpha\in\Psi}{\bigoplus}\mathfrak{s}_{\alpha}$, and we can assume that $\Psi$ is minimal with respect to this property.

Since $\mathfrak{s}_\theta\subseteq B(\mathfrak{g})$, we know that $\theta\in\Psi$, so let us assume for contradiction that $\Psi\neq\{\theta\}$. Then using Proposition \ref{propn: step 2 for irreducible root systems}, we know that 

\begin{itemize}
\item there exists $\alpha\in\Phi^+$ such that $[x_\alpha,[x_\alpha,x_\beta]]=0$ for all $\beta\in\Psi$,

\item we can choose a subset $\varnothing\neq\Theta\subseteq\Psi$, maximal with respect to the property that $[x_\alpha,x_\beta]\neq 0$ for all $\beta\in\Psi$, and 

\item $k(\alpha,\beta)\neq k(\alpha,\beta')$ for all $\beta,\beta'\in\Theta$, $\beta\neq\beta'$. 
\end{itemize}

Applying Proposition \ref{propn: calculating B(g) using root bases}, it follows that there exists a proper subset $\Psi'$ of $\Psi$ such that $B(\mathfrak{g})\subseteq\underset{\beta\in\Psi'}{\bigoplus}\mathfrak{s}_\beta$, contradicting minimality of $\Psi$. Therefore, $\Psi=\{\theta\}$, so $B(\mathfrak{g})=s_{\theta}=Z(\mathfrak{g})$ as requried.\end{proof}

\begin{rk}
This theorem is false if $\mathfrak{s}$ has type $C$. For example, if it has type $C_2=B_2$, then $\mathfrak{g}=\mathfrak{n}^+$ is 4-dimensional with basis $\{x_0,x_1,x_2,x_3\}$ such that $[x_0,x_2]=x_3$, $[x_0,x_3]=x_1$, all other brackets zero. In this case $B(\mathfrak{g})=\text{ Span}\{x_1,x_2,x_3\}$ and $Z(\mathfrak{g})=\text{ Span}\{x_1\}$.
\end{rk}

\begin{cor}\label{cor: new cases}
Suppose $G$ is a $p$-valuable group and $\mathfrak{g}_G$ is a positive nilpotent subalgebra of a split semisimple algebraic Lie algebra over $\mathbb{Q}_p$, such that no irreducible component has type $C$. Then all faithful prime ideals in $KG$ are controlled by $Z(G)$.
\end{cor}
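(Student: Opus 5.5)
The corollary should follow by chaining Theorem \ref{thm: B(g) for g nilpotent subalgebra} with Corollary \ref{cor: when B(G)=Z(G)}. First I check the hypotheses of Corollary \ref{cor: when B(G)=Z(G)}: that corollary is stated for nilpotent $p$-valuable groups $G$, so I need $G$ to be nilpotent.

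By assumption $\mathfrak{g}_G=\mathfrak{n}^+$ is a positive nilpotent subalgebra of a split semisimple Lie algebra, so $\mathfrak{g}_G$ is a nilpotent Lie algebra. To transfer nilpotency to $G$, I would use Lemma \ref{lem: isolated subgroups to ideals} and Lazard's correspondence. Under $H\mapsto\mathfrak{g}_H$, the upper central series of $G$ corresponds to that of $\mathfrak{g}_G$. This is because $Z(\mathfrak{g})=\mathfrak{g}_{Z(G)}$, as already noted before Corollary \ref{cor: when B(G)=Z(G)}, and we can iterate by passing to the $p$-valuable quotients $G/Z_i(G)$; each $Z_i(G)$ is isolated, so these quotients are torsion-free. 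Hence $Z_n(G)$ is an isolated normal subgroup with $\mathfrak{g}_{Z_n(G)}=\mathfrak{g}_G$, which forces $Z_n(G)=G$.

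Alternatively, and more cheaply, I could simply note that nilpotency of $G$ is implicit in the paper's standing setting (compare Theorem 2 in the introduction, which assumes $G$ nilpotent). This is the only point needing care. It could be the main obstacle if one insists on deriving nilpotency of $G$ from that of $\mathfrak{g}_G$, but it is standard, since $Sat(G)$ and its Lie lattice $\log(Sat(G))$ have matching commutator structure.

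Next, Theorem \ref{thm: B(g) for g nilpotent subalgebra} gives $B(\mathfrak{g}_G)=Z(\mathfrak{g}_G)$, because no irreducible component of the ambient algebra has type $C$. Corollary \ref{cor: when B(G)=Z(G)} then yields that every faithful prime ideal of $KG$ is controlled by $Z(G)$. Equivalently, Proposition \ref{propn: B(G) to B(g)} and injectivity in Lemma \ref{lem: isolated subgroups to ideals} give $B(G)=Z(G)$, and Theorem \ref{thm: faithful primes are controlled by B} concludes.
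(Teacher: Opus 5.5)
Your proposal is correct and follows the paper's proof exactly: the paper simply combines Theorem \ref{thm: B(g) for g nilpotent subalgebra} (giving $B(\mathfrak{g}_G)=Z(\mathfrak{g}_G)$) with Corollary \ref{cor: when B(G)=Z(G)}. Your additional check that $G$ is nilpotent is a sound supplement; the paper leaves this implicit, as in the hypothesis of Theorem 2 in the introduction.
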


\begin{proof}
Using Theorem \ref{thm: B(g) for g nilpotent subalgebra}, we know that $Z(\mathfrak{g}_G)=B(\mathfrak{g}_G)$. So using Corollary \ref{cor: when B(G)=Z(G)} we see that all faithful prime ideals in $KG$ are controlled by $Z(G)$.\end{proof}

\subsection{Comparison to $A(G)$}

To assess how much of an improvement the method of using $B(G)$ is to studying prime ideals in $KG$, we need to compare it to results obtained using $A(G)$.

As stated previously, if $A(G)=B(G)$ then this method constitutes no improvement, and this is the case for many examples, as the following result demonstrates.

\begin{lem}\label{lem: type C}
Suppose $\mathfrak{s}$ is a simple Lie algebra of type $C_n$ for $n\geq 2$, and $\mathfrak{g}=\mathfrak{n}^+$. Then $B(\mathfrak{g})$ is a maximal abelian ideal in $\mathfrak{g}$ of dimension $\binom{n+1}{2}$.
\end{lem}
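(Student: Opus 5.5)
Realise $C_n$ as $\Phi=\{\pm e_i\pm e_j,\pm 2e_i\}$ with $\Phi^+=\{e_i-e_j\ (i<j),\ e_i+e_j\ (i<j),\ 2e_i\}$, and let $\mathfrak{a}:=\bigoplus_{i\le j}\mathfrak{s}_{e_i+e_j}$. This is the candidate, of dimension $\binom{n}{2}+n=\binom{n+1}{2}$. Concretely, in $\mathfrak{sp}_{2n}$ with $\mathfrak{n}^+$ the block upper triangular matrices, $\mathfrak{a}$ is the upper-right block of symmetric $n\times n$ matrices. It is an abelian ideal, since a sum of two roots $e_i+e_j$ has coefficient sum $4$ and is never a root, and adding a positive root to $e_i+e_j$ either gives $e_k+e_l$ or leaves $\Phi$. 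It is a maximal abelian ideal because any $x\in\mathfrak{n}^+$ with a nonzero $e_i-e_j$ component brackets nontrivially into $\mathfrak{a}$: use $[x_{e_i-e_j},x_{2e_j}]\neq0$ and argue on the lowest such component by height. The plan is to prove $B(\mathfrak{g})=\mathfrak{a}$ in two steps.

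\emph{Step 1: $\mathfrak{a}$ satisfies \eqref{eqn: 2star}.} Write $x=y+z$ with $y\in\mathfrak{l}:=\bigoplus_{i<j}\mathfrak{s}_{e_i-e_j}$ and $z\in\mathfrak{a}$. Since $\mathfrak{a}$ is abelian, $[x,a]=[y,a]$ and $[x,[x,a]]=[y,[y,a]]$. In matrix form, $y$ corresponds to a strictly upper triangular $Y\in\mathfrak{gl}_n$ and $a$ to a symmetric matrix $S$. The action is $\mathrm{ad}_y(S)=YS+SY^{T}$, so $\mathrm{ad}_y=L_Y+R_{Y^T}$ on $\mathrm{Sym}_n$. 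Assume $\mathrm{ad}_y^2(S)=0$ for all symmetric $S$. Expanding gives $Y^2S+2YSY^T+S(Y^T)^2=0$ for all symmetric $S$, and we must deduce $Y=0$, so that $[x,\mathfrak{a}]=0$.

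Take $S=E_{kk}$ and then $S=E_{kl}+E_{lk}$, and read off matrix entries. For $S=E_{kk}$ the term $2YE_{kk}Y^T=2\,Y_{\cdot k}Y_{\cdot k}^T$ is the outer product of the $k$-th column. Its diagonal entries are $2Y_{ik}^2$, while the other two terms have $(i,i)$ entries $(Y^2)_{ik}\delta_{ik}$-type contributions, which vanish on the diagonal by strict triangularity. Hence $Y_{ik}=0$ for all $i,k$. I would also double-check the characteristic-$0$ assumption, which is used to divide by $2$. So the only $x$ killing $\mathfrak{a}$ under $\mathrm{ad}^2$ already centralises it, and $\mathfrak{a}\subseteq B(\mathfrak{g})$.

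\emph{Step 2: $B(\mathfrak{g})\subseteq\mathfrak{a}$.} By Lemma \ref{lem: properties of B} and Proposition \ref{propn: B(G) to B(g)} (the Lie analogue of the argument in Lemma \ref{lem: properties of A}), $B(\mathfrak{g})$ is contained in $A(\mathfrak{g})$ and is therefore abelian. Since $\mathfrak{a}\subseteq B(\mathfrak{g})$, we get $B(\mathfrak{g})\subseteq\mathbf{C}_{\mathfrak{g}}(\mathfrak{a})$. If I can show $\mathbf{C}_{\mathfrak{g}}(\mathfrak{a})=\mathfrak{a}$, which is the maximality argument above, equality follows. To be safe without leaning on the $A$-containment, I would argue directly: if $b\in B(\mathfrak{g})$, then $[b,\mathfrak{a}]\subseteq B(\mathfrak{g})\cap\ldots$; simplest is to note that $\mathfrak{a}+\mathbb{Q}_pb$ must be abelian, and then apply self-centralising.

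The main obstacle is Step 1, the entrywise verification that $\mathrm{ad}_y^2|_{\mathrm{Sym}_n}=0$ forces $Y=0$, together with keeping root-vector normalisations consistent. The diagonal-entry trick with $S=E_{kk}$ is what I would try first.
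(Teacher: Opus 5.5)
Your proposal is correct and follows essentially the paper's proof: the same abelian ideal spanned by the $\mathfrak{s}_{e_i+e_j}$, the same self-centralising check via brackets with $x_{2e_j}$, and the same key computation. Your $(i,i)$ entry $2Y_{ik}^2$ of $\mathrm{ad}_y^2(E_{kk})$ is exactly the paper's $x_{2e_i}$-coefficient $\lambda_{i,k}^2\mu\mu'$ of $[x,[x,x_{2e_k}]]$, written in $\mathfrak{sp}_{2n}$ coordinates (which makes the structure constants explicit), and your Step 2 spells out the use of $B(\mathfrak{g})$ being abelian, which the paper leaves implicit. Your ``to be safe'' aside is not justified as written, but it is easily repaired and then avoids $A(\mathfrak{g})$ altogether: for $x\in\mathfrak{a}$ one has $[x,[x,B(\mathfrak{g})]]\subseteq[\mathfrak{a},\mathfrak{a}]=0$ since $\mathfrak{a}$ is an abelian ideal, so \eqref{eqn: 2star} gives $B(\mathfrak{g})\subseteq\mathbf{C}_{\mathfrak{g}}(\mathfrak{a})=\mathfrak{a}$.
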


\begin{proof}

We can take the root system of type $C_n$ to be the the set $\Phi$ of all integer vectors in $\mathbb{R}^n$ of length $\sqrt{2}$, together with all vectors of the form $2\underline{v}$, where $\underline{v}$ is an integer vector of length 1. Take $$\Phi^+=\{e_i-e_j,2e_k,e_\ell+e_m:i,j,n,k,\ell,m\leq n, i<j,\ell\neq m\}$$

So clearly the set $A=\{e_i+e_j:i,j\leq n\}=\{2e_k,e_\ell+e_m:k,\ell,m\leq n,\ell\neq m\}\subseteq\Phi^+$, and $A$ has size $\binom{n}{2}+n=\frac{n(n-1)}{2}+n=\frac{n(n+1)}{2}=\binom{n+1}{2}$.

Moreover, for all $\alpha,\beta\in A$, $\alpha+\beta\notin\Phi$, so $[\mathfrak{s}_\alpha,\mathfrak{s}_\beta]=0$. In fact, if $\alpha\in\Phi^+$ and $\alpha\notin A$ then $\alpha=e_i-e_j$ for some $i<j$, so for any $\beta=e_s+e_r\in A$, $\alpha+\beta$ lies in $\Phi^+$ if and only if $j=s$ or $r$. Hence $\alpha+\beta=e_i+e_s$ or $e_i+e_r$, and thus $\alpha+\beta\in A$. So either $[\mathfrak{s}_\alpha,\mathfrak{s}_\beta]=0$ or $[\mathfrak{s}_\alpha,\mathfrak{s}_\beta]=s_\gamma$ for some $\gamma\in A$. It follows that $\mathfrak{h}=\underset{\alpha\in A}{\bigoplus}\mathfrak{s}_\alpha$ is an abelian ideal of $\mathfrak{g}$.

Now, suppose $\mathfrak{a}$ is an abelian ideal containing $\mathfrak{h}$. Then for any element $x\in\mathfrak{a}$, write $x=\underset{\alpha\in\Phi^+}{\sum}\lambda_\alpha x_\alpha$, where $x_{\alpha}$ is a generator of $\mathfrak{s}_{\alpha}$. Therefore, $$[x,x_{2e_j}]=\underset{i<j\leq n}{\sum}\lambda_{e_i-e_j}\mu_{e_i-e_j,2e_j}x_{e_i+e_j}$$ and since $x_{2e_j}\in\mathfrak{h}\subseteq\mathfrak{a}$ and $\mathfrak{a}$ is abelian, we know that $[x,x_{2e_j}]=0$, and hence $\lambda_{e_i-e_j}=0$ for all $i<j\leq n$. It follows that $x\in\mathfrak{h}$ and thus $\mathfrak{a}=\mathfrak{h}$, so $\mathfrak{h}$ is a maximal abelian ideal of $\mathfrak{g}$. 

So to prove that $\mathfrak{h}=B(\mathfrak{g})$, it remains to show that $\mathfrak{h}$ satisfies \eqref{eqn: 2star}. We will show that for all $x\in\mathfrak{g}$, if $[x,[x,\mathfrak{s}_{2e_k}]]=0$ for all $k\leq n$ then $x\in\mathfrak{h}$, and it will follow immediately that if $[x,[x,\mathfrak{h}]]=0$ then $[x,\mathfrak{h}]=0$ as required.

We may assume that the $x_{\alpha}$-coefficient of $x$ is zero if $\alpha\in A$, so we can write $x=\underset{i<j\leq n}{\sum}\lambda_{i,j}x_{e_i-e_j}$, and we want to prove that $\lambda_{i,j}=0$ for all $i,j$.

For each $k\leq n$, we see $$[x,x_{2e_k}]=\underset{i<j\leq n}{\sum}\lambda_{i,j}[x_{e_i-e_j},x_{2e_k}]=\underset{i< k}{\sum}\lambda_{i,k}\mu_{e_i-e_k,2e_k}x_{e_i+e_k}$$ thus 
\begin{align*}
0=[x,[x,x_{2e_k}]]&=\underset{i< k}{\sum}\lambda_{i,k}\mu_{e_i-e_k,2e_k}[x,x_{e_i+e_k}]=\underset{\ell<m\leq n}{\sum}\underset{i< k}{\sum}\lambda_{\ell,m}\lambda_{i,k}\mu_{e_i-e_k,2e_k}[x_{e_\ell-e_m},x_{e_i+e_k}]\\&=\underset{\ell<i<k}{\sum}\lambda_{\ell,i}\lambda_{i,k}\mu_{e_i-e_k,2e_k}\mu_{e_\ell-e_i,e_i+e_k}x_{e_\ell+e_k}+\underset{\ell,i<k}{\sum}\lambda_{\ell,k}\lambda_{i,k}\mu_{e_i-e_k,2e_k}\mu_{e_\ell-e_k,e_i+e_k}x_{e_\ell+e_i}
\end{align*}

And note that the $\mu$'s in this equation are all non-zero. Suppose $k$ is minimal such that $\lambda_{j,k}\neq 0$ for some $j<k$. Then the first sum in the equation above is zero (since every term is a multiple of $\lambda_{\ell,i}$ for $i<k$), and the $x_{2e_{j}}$ coefficient of the second sum is $\lambda_{j,k}^2\mu_{e_\ell-e_k,2e_k}\mu_{e_{\ell}-e_k,e_\ell+e_k}$. So since this coefficient must be zero we conclude that $\lambda_{j,k}=0$ -- contradiction. Thus we must have that $\lambda_{i,j}=0$ for all $i,j$ as required.\end{proof}

\begin{cor}\label{cor: type C}
If $G$ is a $p$-valuable group and $\mathfrak{g}=\mathfrak{g}_G$ is a positive nilpotent subalgebra of a split simple algebraic Lie algebra of type $C_n$, then $B(G)=A(G)$ has rank $\binom{n+1}{2}$, while $Z(G)$ has rank 1.
\end{cor}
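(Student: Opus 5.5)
The plan is to move everything to the Lie algebra and use Lemma \ref{lem: type C} there.

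\emph{Rank of $B(G)$ and $Z(G)$.} By Proposition \ref{propn: B(G) to B(g)} we have $\mathfrak{g}_{B(G)}=B(\mathfrak{g})$. The rank of a closed isolated subgroup $H$ equals $\dim_{\mathbb{Q}_p}\mathfrak{g}_H$, since $\log(Sat(H))$ is a full rank lattice in $\mathfrak{g}_H$. So Lemma \ref{lem: type C} gives that $B(G)$ has rank $\binom{n+1}{2}$. Likewise $\mathfrak{g}_{Z(G)}=Z(\mathfrak{g})=\mathfrak{s}_\theta$ is one dimensional, so $Z(G)$ has rank $1$.

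\emph{Showing $A(G)=B(G)$.} Lemma \ref{lem: properties of B}(iv) gives $B(G)\subseteq A(G)$. By Lemma \ref{lem: properties of A}, $A(G)$ is abelian because $G$ is nilpotent. It is also closed, isolated and characteristic (each $A_i$ is), hence normal. So by Lemma \ref{lem: isolated subgroups to ideals}, $\mathfrak{g}_{A(G)}$ is an ideal of $\mathfrak{g}$. It is abelian: elements of $\log(A(G))$ commute in $\widehat{\mathbb{Q}_pG}$, and the bracket formula for $\log$ gives zero brackets, which extend to the $\mathbb{Q}_p$-span. We also have $\mathfrak{g}_{A(G)}\supseteq\mathfrak{g}_{B(G)}=B(\mathfrak{g})$. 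Lemma \ref{lem: type C} says $B(\mathfrak{g})$ is a \emph{maximal} abelian ideal, so $\mathfrak{g}_{A(G)}=B(\mathfrak{g})=\mathfrak{g}_{B(G)}$. Injectivity in Lemma \ref{lem: isolated subgroups to ideals} then gives $A(G)=B(G)$.

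\emph{Expected obstacle.} The only delicate point is justifying that $\mathfrak{g}_H$ is abelian when $H$ is an abelian closed isolated normal subgroup. This follows from the bracket formula $[\log(g),\log(h)]=\log(\lim(g^{p^n},h^{p^n})^{p^{-2n}})$: the right-hand side is $\log(1)=0$ when $g,h$ commute. The rank identification is also routine, using that the $\mathbb{F}_p[\pi]$-rank of $\gr(H)$ agrees with the $\mathbb{Z}_p$-rank of $\log(Sat(H))$ by Lazard's theorem.
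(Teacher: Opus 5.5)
Your proposal is correct and follows the same route as the paper. You use $B(G)\subseteq A(G)$ and the fact that $\mathfrak{g}_{A(G)}$ is an abelian ideal containing the maximal abelian ideal $B(\mathfrak{g})=\mathfrak{g}_{B(G)}$ of Lemma \ref{lem: type C}; this forces $\mathfrak{g}_{A(G)}=\mathfrak{g}_{B(G)}$, so $A(G)=B(G)$ by injectivity, and the ranks come from $\dim B(\mathfrak{g})=\binom{n+1}{2}$ and $\dim\mathfrak{s}_\theta=1$. You also spell out points the paper leaves implicit, namely why $\mathfrak{g}_{A(G)}$ is abelian and why rank equals Lie algebra dimension.
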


\begin{proof}

We know $\mathfrak{g}_{A(G)}$ is an abelian ideal of $\mathfrak{g}$ and $B(\mathfrak{g})=\mathfrak{g}_{B(\mathfrak{g})}\subseteq\mathfrak{g}_{A(G)}$. But $\mathfrak{g}_{B(\mathfrak{g})}$ is a maximal abelian ideal of dimension $\binom{n+1}{2}$ by Lemma \ref{lem: type C}, so it follows that $\mathfrak{g}_{B(\mathfrak{g})}=\mathfrak{g}_{A(\mathfrak{g})}$, and hence $A(G)=B(G)$, and they both have rank $\binom{n+1}{2}$. 

But $Z(\mathfrak{g})=\mathfrak{g}_{Z(G)}=s_{\theta}$ has dimension 1, so $Z(G)$ has rank 1.\end{proof}

\noindent So the $B(G)$ method affords no improvement if we assume that $G$ has type $C$, but in light of Corollary \ref{cor: new cases}, we know that it produces solid results in all other types. The only question remaining is, could a result such as this could be proved using the original $A(G)$-method?\\

The short answer to this is no, because there are examples of groups $G$ given by Corollary \ref{cor: new cases} such $A(G)$ is strictly larger than the centre, so the original control theorem (Theorem \ref{thm: faithful primes controlled by A(G)}) gives us a strictly weaker result. 

For example, if $G$ is the group of unipotent, strictly upper triangular $n\times n$ matrices over $\mathbb{Z}_p$, then $\mathfrak{g}_G$ is a positive nilpotent subalgebra of type $A_{n-1}$, but $A(G)$ consists of all matrices of the form $\left(\begin{array}{cc}
I_k & M \\
  0   & I_{n-k}
\end{array}\right)$ where $k=\lfloor\frac{n}{2}\rfloor$. If $n>3$ then this is never central, so Theorem \ref{thm: faithful primes controlled by A(G)} does not imply our Conjecture A for this group, and we need to apply Corollary \ref{cor: new cases}.

\noindent The following result generalises this example.

\begin{propn}
If $\mathfrak{g}_G$ is the positive subalgebra of a split simple algebraic Lie algebra, then $A(G)\neq Z(G)$.
\end{propn}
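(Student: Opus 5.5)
The natural approach is to use Lemma \ref{Z_2 in A}: if $Z_2(G)$ is abelian and $Z_2(G)\neq Z(G)$, then $Z_2(G)\subseteq A(G)$ and $A(G)\neq Z(G)$. So it suffices to verify these two conditions. Via Lemma \ref{lem: isolated subgroups to ideals}, both conditions transfer to the Lie algebra. Under the bijection between closed isolated normal subgroups and ideals, $\mathfrak{g}_{Z(G)}=Z(\mathfrak{g})$, and similarly $\mathfrak{g}_{Z_2(G)}=Z_2(\mathfrak{g})$, the second term of the upper central series. This holds because $Z_2(G)/Z(G)=Z(G/Z(G))$ and the correspondence is compatible with quotients by isolated normal subgroups. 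Moreover, a closed isolated subgroup $H$ is abelian if and only if $\mathfrak{g}_H$ is abelian, by the commutator formula for $[\log g,\log h]$. So we need $Z_2(\mathfrak{g})$ abelian and strictly larger than $Z(\mathfrak{g})=\mathfrak{s}_\theta$, where $\mathfrak{g}=\mathfrak{n}^+$ and $\theta$ is the highest root.

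Next compute $Z_2(\mathfrak{n}^+)$ in root terms. Since $\mathfrak{n}^+$ is graded by $\Phi^+$ and stable under $\mathfrak{h}$, its upper central series consists of $\mathfrak{h}$-stable ideals, hence sums of root spaces. The ideal $Z_2$ contains $\mathfrak{s}_\beta$ exactly when $\beta+\gamma\in\{\theta\}\cup(\Phi\setminus\Phi^+)$ for every $\gamma\in\Phi^+$, i.e. when every $\gamma\in\Phi^+$ with $\beta+\gamma\in\Phi$ satisfies $\beta+\gamma=\theta$. Any root $\beta$ with $\theta-\beta\in\Phi^+$ and of maximal height among such roots works: if $\beta+\gamma$ were a root other than $\theta$, then it is a positive root, and a chain argument shows that $\beta+\gamma$ is dominated by $\theta$ but not equal to it. Then $\theta-(\beta+\gamma)$ is a nonnegative combination, and one can add simple roots to reach $\theta$, producing a higher $\beta'$ with $\theta-\beta'$ positive, a contradiction. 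Such $\beta$ exist whenever the rank is at least $2$, for example any root of height $\mathrm{ht}(\theta)-1$. These exist, and $\theta-\beta$ is then a simple root by the standard fact that every non-highest positive root can be raised by a simple root. In rank $1$, type $A_1$, $\mathfrak{g}$ is abelian and the statement fails, so I would assume rank at least $2$, which is implicit in the statement. This gives $Z_2(\mathfrak{g})\supsetneq Z(\mathfrak{g})$.

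The main obstacle is showing that $Z_2(\mathfrak{g})$ is abelian. Write $Z_2(\mathfrak{g})=\bigoplus_{\beta\in\Psi}\mathfrak{s}_\beta$. For $\beta,\beta'\in\Psi$, the defining property forces $[\mathfrak{s}_\beta,\mathfrak{s}_{\beta'}]\subseteq\mathfrak{s}_\theta$. So nonabelianness would require $\beta+\beta'=\theta$ with both $\beta,\beta'\in\Psi$. I would rule this out with a height or coefficient argument. The highest root has coefficient at most $2$ on each simple root away from type $A$ ends, and the roots in $\Psi$ are those $\beta$ with $\langle\beta,\theta^\vee\rangle>0$ and high height, essentially the $\beta$ with $\theta-\beta\in\Phi^+$ that are maximal. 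Alternatively, I could use the known description $\{\beta:\langle\beta,\theta^\vee\rangle=1\}$ for the Heisenberg-type layer, in which $\beta+\beta'=\theta$ pairs genuinely occur. In that case $Z_2$ would be the smaller set cut out by the condition above, and for simply-laced types a direct check shows that $Z_2$ consists of the roots $\theta-\alpha_i$ with $\alpha_i$ simple and adjacent to the affine node. Two of these sum to $\theta$ only if $\theta=2\theta-\alpha_i-\alpha_j$, i.e. $\theta=\alpha_i+\alpha_j$, which happens only in $A_2$. In that case I would handle the $A_2$ (Heisenberg) case separately, where $Z_2=\mathfrak{g}$ and $A(G)=Z(G)$ by Corollary \ref{cor: class 2}. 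This may force a caveat excluding small ranks. For the non-simply-laced types, I would verify $Z_2$ explicitly from the coordinate realisations used in Lemma \ref{lem: eliminating root systems}.
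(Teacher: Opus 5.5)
Your reduction is the same as the paper's. You apply Lemma \ref{Z_2 in A}, transfer to $\mathfrak{g}=\mathfrak{n}^+$, and identify $Z_2(\mathfrak{g})$ as the span of the $\mathfrak{s}_\beta$ with $\beta$ in the set $\mathcal{Z}$ of positive roots $\beta$ such that $\beta+\gamma\in\Phi$ with $\gamma\in\Phi^+$ forces $\beta+\gamma=\theta$.

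You differ from the paper in how you check that $Z_2(\mathfrak{g})$ is abelian. The paper writes $\mathcal{Z}$ in coordinates for each type and inspects sums. You describe $\mathcal{Z}$ uniformly, and you should state that description for every type, since the raising fact you already quote proves it in general. If $\beta\in\mathcal{Z}\setminus\{\theta\}$, pick a simple $\alpha_j$ with $\beta+\alpha_j\in\Phi$; this forces $\beta=\theta-\alpha_j$. Conversely, any root of height $\mathrm{ht}(\theta)-1$ lies in $\mathcal{Z}$ for height reasons alone, so your maximal-height chain argument is unnecessary. Hence $\mathcal{Z}=\{\theta\}\cup\{\theta-\alpha_i:\langle\theta,\alpha_i^\vee\rangle>0\}$. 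A nonzero bracket inside $Z_2(\mathfrak{g})$ then requires $\theta=\alpha_i+\alpha_j$. Every simple-root coefficient of $\theta$ is positive, so this forces rank $2$ with exactly three positive roots, i.e.\ type $A_2$. This removes the deferred coordinate checks for $B_n$, $F_4$ and $G_2$, and it also covers type $C$, which the paper handles separately through Corollary \ref{cor: type C}.

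Drop the aside that $\theta$ has simple-root coefficients at most $2$. It is false: $E_8$ has a coefficient $6$, $F_4$ has $4$, and $G_2$ has $3$. Your final argument does not use it.

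Your small-rank caveat is correct, and it points to a genuine defect in the statement that the paper's case list misses. In the paper's type $A$ entry, $\mathcal{Z}=\{e_1-e_n,e_1-e_{n-1},e_2-e_n\}$. At rank $2$ (coordinates $e_1,e_2,e_3$) this gives $(e_1-e_2)+(e_2-e_3)=\theta$, so $Z_2(\mathfrak{g})$ is not abelian. Concretely, for $A_1$ the group $G$ is abelian. For $A_2$, $\mathfrak{g}$ is the Heisenberg algebra, so $Z_2(G)=G$ and $A(G)=Z(G)$ by Corollary \ref{cor: class 2}. These cases cannot be ``handled separately''; they must be excluded from the statement.

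Your uniform argument therefore proves the corrected statement: $A(G)\neq Z(G)$ precisely when $\mathfrak{s}$ has rank at least $2$ and is not of type $A_2$. It also shows these are the only exceptions, with no coordinate bookkeeping. The paper's approach yields explicit generators for $Z_2(\mathfrak{g})$ in each type, but as written it overlooks the exception.
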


\begin{proof}

Let $\Phi$ be the root system of the simple Lie algebra $\mathfrak{s}$, $\Phi^+$ the corresponding positive system, so $\mathfrak{g}_G=\underset{\alpha\in\Phi^+}{\bigoplus}{s_{\alpha}}$.

Using Lemma \ref{Z_2 in A}, we only need to check that $Z_2(G)$ is abelian, or equivalently $Z_2(\mathfrak{g}_G)$ is abelian. But if $\theta\in\Phi^+$ is the highest root then $Z(\mathfrak{g}_G)=s_{\theta}$, so $Z_2(\mathfrak{g}_G)=\underset{\alpha\in \mathcal{Z}}{\bigoplus}\mathfrak{s}_\alpha$, where $$\mathcal{Z}:=\{\beta\in\Phi^+:\text{ if }\alpha\in\Phi^+\text{ and }\alpha+\beta\in\Phi\text{ then }\alpha+\beta=\theta\}$$ 

So $Z_2(\mathfrak{g}_G)$ is abelian if and only if for all $\beta,\gamma\in\mathcal{Z}$, $\beta+\gamma\notin\Phi$. We will calculate $\mathcal{Z}$ for each root system in turn, using the same positive systems that we used in the proof of Lemma \ref{lem: eliminating root systems} (we leave out type $C$, since we calculated $A(G)$ in this case in Corollary \ref{cor: type C}).

\begin{enumerate}
    \item Type $A_n$: $\theta=e_1-e_n$, and $\mathcal{Z}=\{e_1-e_n,e_1-e_{n-1},e_2-e_n\}$.

    \item Type $B_n$: $\theta=e_1+e_2$, and $\mathcal{Z}=\{e_1+e_2,e_1+e_3\}$.

    \item Type $D_n$: $\theta=e_1+e_2$, and $\mathcal{Z}=\{e_1+e_2,e_1+e_3\}$.

    \item Type $E_8$: $\theta=e_1-e_8$ and $\mathcal{Z}=\{e_1-e_8,e_2-e_8\}$.

    \item Type $F_4$: $\theta=e_1-e_4$ and $\mathcal{Z}=\{e_1-e_4,e_2-e_4\}$.

    \item Type $G_2$: $\theta=3\delta+2\varepsilon$ and $\mathcal{Z}=\{3\delta+2\varepsilon,3\delta+\varepsilon\}$.
\end{enumerate}

\noindent We can also realise the roots of type $E_7$ (resp. type $E_6$) as being the set of all roots in $E_8$ where the first two (resp. 3) coordinates are equal. Therefore, we calculate:

\begin{enumerate}
    \item Type $E_7$: $\theta=e_3-e_8$, and $\mathcal{Z}=\{e_3-e_8,e_4-e_8\}$

    \item Type $E_6$: $\theta=e_4-e_8$ and $\mathcal{Z}=\{e_4-e_8,e_5-e_8\}$
\end{enumerate}

\noindent We can immediately see, in all these examples, that taking the sum of any two vectors in $\mathcal{Z}$ will always leave $\mathcal{Z}$.\end{proof}

\subsection{Obstacles to a complete proof}

While the methods outlined in this paper allow us to establish our Conjecture A for a wide range of new examples, they are still not sufficient to prove it in full generality, because there are still many examples of nilpotent groups $G$ such that $B(G)\neq Z(G)$, as demonstrated by Corollary \ref{cor: type C}, and also the following result.

\begin{lem}\label{lem: abelian-by-one}
Suppose $G$ contains a closed, isolated, abelian normal subgroup $H$ such that $G/H\cong\mathbb{Z}_p$. If $G$ has nilpotency class at least 3 then $B(G)=A(G)=H$ and $B(G)\neq Z(G)$.
\end{lem}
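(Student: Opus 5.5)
Pass to the Lie algebra. By Proposition \ref{propn: B(G) to B(g)} and Lemma \ref{lem: isolated subgroups to ideals}, set $\mathfrak{g}=\mathfrak{g}_G$ and $\mathfrak{h}=\mathfrak{g}_H$. Then $\mathfrak{h}$ is an abelian ideal of codimension one, and $\mathfrak{g}=\mathbb{Q}_p x\oplus\mathfrak{h}$ for some $x$ with $\log$ of a generator of $G/H$ lifting it. Write $D=\mathrm{ad}(x)|_{\mathfrak{h}}$, a nilpotent endomorphism of $\mathfrak{h}$. Since $\mathfrak{h}$ is abelian, every bracket in $\mathfrak{g}$ is governed by $D$. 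Concretely, for $y=\lambda x+h$ and $h'\in\mathfrak{h}$ we have $[y,h']=\lambda D h'$ and $[y,[y,h']]=\lambda^2D^2h'$. Nilpotency class at least $3$ means $D^2\neq 0$, since $\mathfrak{g}^{(3)}=D^2\mathfrak{h}$.

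\textbf{Step 1: $\mathfrak{h}$ satisfies \eqref{eqn: 2star}.} Take $y=\lambda x+h$. If $[y,[y,\mathfrak{h}]]=0$, then $\lambda^2D^2=0$, and since $D^2\neq0$ this forces $\lambda=0$. Then $y\in\mathfrak{h}$, so $[y,\mathfrak{h}]=0$. Hence $\mathfrak{h}\subseteq B(\mathfrak{g})$.

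\textbf{Step 2: $B(\mathfrak{g})\subseteq\mathfrak{h}$.} By Lemma \ref{lem: properties of B}(iv) and Lemma \ref{lem: properties of A}, $B(G)\subseteq A(G)$, and $A(G)$ is abelian. So it suffices to show that $\mathfrak{h}$ is a maximal abelian subalgebra. Suppose $\mathfrak{a}\supsetneq\mathfrak{h}$ is abelian. Then $\mathfrak{a}$ contains some $x+h$, and $[x+h,\mathfrak{h}]=D\mathfrak{h}=0$, contradicting $D\neq 0$. Hence $B(\mathfrak{g})=\mathfrak{h}$, and since $\mathfrak{g}_{A(G)}$ is abelian and contains $\mathfrak{h}$, also $\mathfrak{g}_{A(G)}=\mathfrak{h}$. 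Injectivity in Lemma \ref{lem: isolated subgroups to ideals} then gives $B(G)=A(G)=H$. To apply that lemma we need $A(G)$ and $B(G)$ to be closed, isolated and normal. This holds by Lemma \ref{lem: properties of B} and the remark after Definition \ref{defn: A-groups}.

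\textbf{Step 3: $H\neq Z(G)$.} If $H=Z(G)$, then $G/Z(G)$ would be procyclic and hence $G$ abelian, contradicting class at least $3$. At the Lie level this is simply $D\neq0$.

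The main obstacle is the group/Lie translation: making sure $G/H\cong\mathbb{Z}_p$ really yields $\mathfrak{g}=\mathbb{Q}_px\oplus\mathfrak{h}$ with $\mathfrak{h}$ abelian. This follows from Lemma \ref{lem: isolated subgroups to ideals}, because $\mathfrak{g}_H$ is an abelian ideal of codimension $\mathrm{rank}(G/H)=1$. Once this decomposition is in place, the rest is the short computation with $D$ above.
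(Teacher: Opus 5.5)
Your proof is correct, but for the key step, that $H$ satisfies \eqref{eqn: star}, it takes a genuinely different route from the paper.

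\textbf{The paper's route.} The paper stays inside the group. It assumes $g\notin H$ with $(g,(g,H))=1$ and lets $U$ be the open normal subgroup generated by $g$ and $H$. Since $(g,H)$ is central in $U$, it deduces that $g$ and $H$ lie in $Z_2(U)\subseteq Z_2(G)$, citing \cite[Lemma 4.9]{jones-control-theorem}. So the open subgroup $U$ lies in the isolated subgroup $Z_2(G)$, which forces $Z_2(G)=G$ and contradicts class at least $3$.

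\textbf{Your route.} You linearise instead. Writing $\mathfrak{g}=\mathbb{Q}_px\oplus\mathfrak{h}$ and $D=\mathrm{ad}(x)|_{\mathfrak{h}}$, every bracket is governed by $D$. So \eqref{eqn: 2star} for $\mathfrak{h}$ reduces to $\lambda^2D^2=0\Rightarrow\lambda=0$, and you transport this back through Proposition \ref{propn: B(G) to B(g)}. In fact you only need the direction ``\eqref{eqn: 2star} for $\mathfrak{g}_H$ implies \eqref{eqn: star} for $H$'', since $B(G)\subseteq A(G)$ with $A(G)$ abelian is already available at group level.

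\textbf{The step $A(G)=H$.} Here the two arguments are the same observation in different clothing. The paper notes that the only closed isolated subgroups containing $H$ are $H$ and $G$, and $A(G)\neq G$ because $A(G)$ is abelian. You note that $\mathfrak{h}$ is a maximal abelian subalgebra because $D\neq 0$.

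\textbf{Trade-off.} Your version makes it transparent where the hypotheses enter: $D^2\neq 0$ is exactly the class condition, and $D\neq 0$ gives $H\neq Z(G)$. It also avoids the external lemma on second centres of open subgroups. The price is reliance on the Lie dictionary, including two standard but unstated facts:
\begin{enumerate}
\item $\mathfrak{g}_H$ has codimension $\mathrm{rank}(G/H)=1$;
\item $G$ and $\mathfrak{g}_G$ have the same nilpotency class, via $\mathfrak{g}_{Z_i(G)}=Z_i(\mathfrak{g})$.
\end{enumerate}
The paper's argument, by contrast, needs no logarithms at all.
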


\begin{proof}

We will show that $H$ satisfies \eqref{eqn: star}, and thus $H\subseteq B(G)\subseteq A(G)$ by definition. So since $H$ and $A(G)$ are both closed and isolated by Lemma \ref{lem: properties of B}, it follows that $A(G)=H$ or $A(G)=G$. But $G$ is nilpotent, so $A(G)\neq G$ by Lemma \ref{lem: properties of A}, thus $A(G)=B(G)=H$.

Suppose $g\in G$ and $(g,(g,H))=1$. If $g\in H$ then $(g,H)=1$ since $H$ is abelian, so assume for contradiction that $g\notin H$. Then since $G/H\cong\mathbb{Z}_p$, we know that $g$ and $H$ generate an open normal subgroup $U$ of $G$. Moreover, since $(g,H)\subseteq H$, and $g$ and $H$ both commute with $(g,H)$, it follows that $(U,(g,H))=1$, i.e. $(g,H)$ is central in $U$.

But $(g,g)=1$ so this means that $(g,U)$ is central in $U$ by Lemma \ref{lem: commutators}, and hence $g\in Z_2(U)$. But $Z_2(U)\subseteq Z_2(G)$ by the proof of \cite[Lemma 4.9]{jones-control-theorem}. So $g\in Z_2(G)$, and since $(H,H)=1$ this means that $(U,H)\subseteq Z(G)$, so $H\subseteq Z_2(U)\subseteq Z_2(G)$, and hence $U\subseteq Z_2(G)$. This means that $Z_2(G)$ is an open normal subgroup of $G$, so since it is isolated this implies that $Z_2(G)=G$, contradicting the assumption that $G$ has nilpotency class at least 3.

So $B(G)=A(G)=H$, and $H\neq Z(G)$ since $G$ is non-abelian and $G/H$ has rank 1.\end{proof}

To complete a proof of Conjecture A for nilpotent groups in general, it would be enough to establish that it holds for groups of the form $\mathbb{Z}_p^d\rtimes\mathbb{Z}_p$.\\

We will conclude with some further examples, not arising from Lie theory and not covered by the previous cases. The following result suggests how we can try to employ an inductive strategy in the cases where $A(G)$ or $B(G)$ are too large, in particular circumventing the issue that $A(G)\neq Z(G)$ whenever $Z_2(G)$ is abelian.

\begin{propn}\label{propn: new examples}
Let $\mathfrak{h}$ be an ideal of $\mathfrak{g}:=\mathfrak{g}_G$ containing $\mathfrak{g}_{B(G)}$ (or $\mathfrak{g}_{A(G)}$). Suppose that $B(\mathfrak{h})=Z(\mathfrak{h})=Z_2(\mathfrak{g})$. Then all faithful prime ideals of $KG$ are controlled by $Z(G)$.
\end{propn}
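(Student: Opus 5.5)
Let $H$ be the closed, isolated normal subgroup of $G$ with $\mathfrak{g}_H=\mathfrak{h}$, which exists by Lemma \ref{lem: isolated subgroups to ideals}. Since $\mathfrak{g}_{B(G)}\subseteq\mathfrak{h}$ (or $\mathfrak{g}_{A(G)}\subseteq\mathfrak{h}$), the inclusion-preserving bijection gives $B(G)\subseteq H$ (resp.\ $A(G)\subseteq H$). The plan is to reduce to control inside $KH$ and then use the hypothesis on $\mathfrak{h}$ to push control down to $Z(G)$.

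\textbf{Step 1: control by $H$.} Let $P$ be a faithful prime of $KG$. By Theorem \ref{thm: faithful primes are controlled by B} (or Theorem \ref{thm: faithful primes controlled by A(G)}), $P$ is controlled by $B(G)$ (resp.\ $A(G)$), and hence by $H$. By Lemma \ref{lem: controller subgroup}, $P^\chi\subseteq H$.

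\textbf{Step 2: descend to $H$.} I would use the same reduction as in the proofs of Proposition \ref{propn: from Z2 to Z} and Theorem \ref{thm: faithful primes are controlled by B}. By Properties \ref{props: non-splitting right ideals}(ii), it suffices to treat faithful virtually non-splitting right ideals $I=P'KG$. Here $P'$ is a faithful non-splitting prime of $KU$ for some open normal subgroup $U$ of $G$. The hypotheses pass to $U$, since $\mathfrak{g}_U=\mathfrak{g}$ and $\mathfrak{g}_{U\cap H}=\mathfrak{h}$.

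So assume $P$ is non-splitting. Then $Q:=P\cap KP^\chi$ is a faithful prime by Properties \ref{props: non-splitting right ideals}(i). Since $P^\chi\subseteq H$, the ideal $Q':=P\cap KH$ is controlled by $P^\chi$ and has $Q'\cap KP^\chi=Q$. I expect $Q'$ to be a faithful prime of $KH$; one could alternatively work with $Q$ directly inside the nilpotent group $P^\chi$.

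\textbf{Step 3: apply the hypothesis on $\mathfrak{h}$.} By Corollary \ref{cor: when B(G)=Z(G)} applied to $H$, the assumption $B(\mathfrak{h})=Z(\mathfrak{h})$ shows that $Q'$ is controlled by $Z(H)$. Since $\mathfrak{g}_{Z(H)}=Z(\mathfrak{h})=Z_2(\mathfrak{g})=\mathfrak{g}_{Z_2(G)}$, we get $Z(H)=Z_2(G)$, so $P^\chi\subseteq Z_2(G)$.

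\textbf{Step 4: from $Z_2$ to $Z$.} I would now follow the first paragraph of the proof of Proposition \ref{propn: from Z2 to Z}. Fix $g\in G$ and let $\varphi_g$ be conjugation by $g$. Then $\varphi_g$ is bounded and fixes $Q$. It centralises $Z(G)$, and it centralises $P^\chi/Z(G)$ because $P^\chi\subseteq Z_2(G)$. The quotient $KP^\chi/Q$ has infinite dimension by Proposition \ref{propn: infinite codimension}, or $P^\chi$ is abelian and the same codimension argument applies. Also, $Q$ is not controlled by any proper open subgroup of $P^\chi$. Theorem \ref{thm: control theorem using bounded automorphisms} therefore forces $\varphi_g$ to act trivially on $P^\chi$, so $P^\chi\subseteq Z(G)$. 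For the virtually non-splitting case, we then get control by $Z(U)=Z(G)\cap U$, and we conclude exactly as in Proposition \ref{propn: from Z2 to Z}.

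\textbf{Main obstacle.} I expect the hardest step to be Step 2: showing that $P\cap KH$ is a faithful prime of $KH$ when $H$ is not open in $G$. The fallback I would try first is to avoid $KH$ altogether. I would apply the $(\ast)$ criterion of Theorem \ref{thm: faithful primes are controlled by B} directly to $Q$ in $KP^\chi$, running that theorem's argument with conjugation by elements of $H$. This uses the fact that $P^\chi\subseteq H$ is normal in $H$, together with Proposition \ref{propn: B(G) to B(g)} applied to the ideal $\mathfrak{g}_{P^\chi}$ of $\mathfrak{h}$. This should yield $\mathfrak{g}_{P^\chi}\subseteq B(\mathfrak{h})=Z_2(\mathfrak{g})$ without needing primality of $P\cap KH$.
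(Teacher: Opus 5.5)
Your fallback is correct, and it takes a genuinely different route from the paper's proof.

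The paper never descends to $KP^\chi$. For each open normal $U$ it does the following:
\begin{enumerate}
\item takes an arbitrary faithful prime $P$ of $KU$ and uses control by $B(U)\subseteq H$;
\item writes $P\cap KH=Q_1\cap\dots\cap Q_n$ as a finite orbit of faithful primes of $KH$;
\item applies Theorem~\ref{thm: faithful primes are controlled by B} to $H$ as a black box (via $B(H)=Z(H)$), so each $Q_i$ is controlled by $Z(H)$;
\item recombines by faithful flatness to get $P$ controlled by $Z(H)=Z_2(U)$, and then quotes Proposition~\ref{propn: from Z2 to Z}.
\end{enumerate}
That orbit decomposition is how the paper handles the point you flag. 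As written, your primary route is incomplete at exactly that point, because primality of $P\cap KH$ is only expected, not shown. It does hold for non-splitting $P$. The normal core $V$ of the stabiliser of $Q_1$ is open and contains $H\supseteq P^\chi$, so $V$ controls $P$. Then $P\cap KV=\bigcap_i Q_iKV$ is a finite intersection of two-sided ideals, and its primality forces $n=1$. Your fallback avoids orbits and flatness altogether. Rerunning the bounded-automorphism argument of Theorem~\ref{thm: faithful primes are controlled by B} on $Q=P\cap KP^\chi$, with conjugation only by elements of $H$, shows that $P^\chi$ satisfies $(\ast)$ inside $H$. Hence $P^\chi\subseteq B(H)=Z(H)=Z_2(G)$. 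The cost is reopening that proof rather than citing it.

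The monotonicity behind your fallback is that a subgroup of $H$ satisfying $(\ast)$ relative to $G$ also satisfies it relative to $H$. Applied to $B(G)$ itself, it gives a much shorter proof. In either version of the hypothesis $B(G)\subseteq H$, so $B(G)\subseteq B(H)=Z_2(G)$ by Proposition~\ref{propn: B(G) to B(g)} applied to $H$. Then $(g,(g,B(G)))\subseteq (g,Z(G))=1$ for every $g\in G$, so $(\ast)$ forces $B(G)\subseteq Z(G)$. Thus the hypotheses already imply $B(G)=Z(G)$, which is the equality conjectured in the paper's closing remark. The proposition is then immediate from Theorem~\ref{thm: faithful primes are controlled by B}.

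One correction to Step~4: $P^\chi\subseteq A(G)$ is abelian, so Proposition~\ref{propn: infinite codimension} never applies. Its argument genuinely fails for abelian groups: for $G\cong\mathbb{Z}_p$ topologically generated by $g$, the ideal $(g-1-p)KG$ is a faithful prime of codimension one. The finite-codimensional case needs a separate, easy argument. There $KP^\chi/Q$ is a finite field extension of $K$ into which $P^\chi$ embeds equivariantly, so conjugation by $g$ has finite order on $P^\chi$. An automorphism $\varphi$ of finite order with $(\varphi-1)^2=0$ on a torsion-free abelian group is trivial. The paper is also silent on this point where it applies Theorem~\ref{thm: control theorem using bounded automorphisms} to abelian controller subgroups.
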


\begin{proof}

Let $U$ be any open, normal subgroup of $G$, then $\mathfrak{g}_U=\mathfrak{g}_G=\mathfrak{g}$, while $\mathfrak{h}=\mathfrak{g}_H$ for some closed, isolated normal subgroup $H$ of $U$. Moreover, $B(\mathfrak{g})=\mathfrak{g}_{B(U)}=\mathfrak{g}_{B(G)}$ by Proposition \ref{propn: B(G) to B(g)}, so $B(U)\subseteq H$, and $B(H)=Z(H)=Z_2(U)$.

We know by Corollary \ref{thm: faithful primes are controlled by B} that all faithful primes in $KU$ are controlled by $B(U)$, and hence by $H$. Let $P$ be a faithful prime ideal of $KU$, and write $P\cap KH = Q_1 \cap \dots \cap Q_n$, where $\{Q_1, \dots, Q_n\}$ is a $G$-orbit of prime ideals of $KH$, and let $Q = Q_i$ be arbitrary. Then $Q$ is also faithful, and since $B(H) = Z(H)$ it follows from Corollary \ref{thm: faithful primes are controlled by B} again that $Q$ is controlled by $Z(H)$. It follows by a faithful flatness argument (cf. \cite[Lemma 5.1]{ardakovInv}) that
\begin{align*}
(P\cap KZ(H))KU &= ((P\cap KH)\cap KZ(H))KH\cdot KU\\
&= (Q_1 \cap \dots \cap Q_n \cap KZ(H))KH\cdot KU\\
&= ((Q_1 \cap KZ(H)) \cap \dots \cap (Q_n \cap KZ(H)))KH\cdot KU\\
&= ((Q_1 \cap KZ(H))KH \cap \dots \cap (Q_n \cap KZ(H))KH)KU\\
&= (Q_1 \cap \dots \cap Q_n)KU\\
&= (P\cap KH)KU = P.
\end{align*}

Hence $P$ is controlled by $Z(H)=Z_2(U)$. Since this is true for all open normal subgroups $U$ of $G$, it follows from Proposition \ref{propn: from Z2 to Z} that every faithful prime ideal of $KG$ controlled by $Z(G)$.\end{proof}

The following example demonstrates how to make use of this result to improve the $A(G)$-method (though in this case the $B(G)$-method also works).

\begin{ex}
Suppose $\mathfrak{g}:=\mathfrak{g}_G$ is the $\mathbb{Q}_p$-Lie algebra generated by $x_1, \dots, x_6$, subject to the relations 
$[x_1,x_2] = x_3$, $[x_1,x_5] = x_6$, $[x_2,x_3] = x_4$, $[x_2,x_4] = x_5$, $[x_3,x_4] = x_6$, and otherwise $[x_i, x_j] = 0$.

Then $\mathfrak{g}$ has nilpotency class 5, and its upper central series is
\begin{align*}
Z(\mathfrak{g}) &= \langle x_6\rangle,\\
Z_2(\mathfrak{g}) &= \langle x_5,x_6\rangle,\\
Z_3(\mathfrak{g}) &= \langle x_4,x_5,x_6\rangle,\\
Z_4(\mathfrak{g}) &= \langle x_3,x_4,x_5,x_6\rangle,\\
Z_5(\mathfrak{g}) &= \langle x_1,x_2,x_3,x_4,x_5,x_6\rangle.
\end{align*}

\begin{enumerate}[label=(\roman*)]
\item Since $Z_2(\mathfrak{g})$ is abelian, it follows from Lemma \ref{Z_2 in A} that $A(G)\neq Z(G)$. We can calculate $A(G)$ explicitly by defining the sequence $A_0\supseteq A_1\supseteq A_2\supseteq$ as a sequence of Lie algebra, as opposed to groups, and the intersection will yield $A(\mathfrak{g})=\mathfrak{g}_{A(G)}$:
\begin{align*}
A_0 &= \langle x_1,x_2,x_3,x_4,x_5,x_6\rangle &
Z_2(A_0) &= \langle x_5,x_6\rangle \\
A_1 &= \langle x_2,x_3,x_4,x_5,x_6\rangle &
Z_2(A_1) &= \langle x_4,x_5,x_6\rangle \\
A_2 &= \langle x_4,x_5,x_6\rangle &
Z_2(A_2) &= \langle x_4,x_5,x_6\rangle \\
A_3 &= \langle x_4,x_5,x_6\rangle && \dots
\end{align*}
and so the sequence has stabilised at $A(\mathfrak{g}) = Z_3(\mathfrak{g})$. Hence all faithful prime ideals $P\lhd KG$ are controlled by $Z_3(G)$ by \cite[Theorem B]{jones-primitive-ideals}. Let $\mathfrak{h} = \langle x_3, x_4, x_5, x_6\rangle$, which has nilpotency class 2, with upper central series  
\begin{align*}
Z(\mathfrak{h}) &= \langle x_5, x_6\rangle,\\
Z_2(\mathfrak{h}) &= \langle x_3,x_4,x_5,x_6\rangle=\mathfrak{h}.
\end{align*}

So since $\mathfrak{h}$ has nilpotency class 2, $B(\mathfrak{h})=Z(\mathfrak{h})=\langle x_5, x_6\rangle=Z_2(\mathfrak{g})$, so we can apply Proposition \ref{propn: new examples} to get that all faithful prime ideals in $KG$ are controlled by $Z(G)$.

\item We can calculate $B(\mathfrak{g})$ as follows. Let $y = \alpha_1 x_1 + \dots + \alpha_6 x_6$ be an arbitrary element of $B(\mathfrak{g})$. Then we calculate:
\begin{itemize}
\item $[x_1,[x_1, y]] = 0$ for any such $y$, and hence $[x_1, y] = \alpha_2 x_3 + \alpha_5 x_6 = 0$, so $\alpha_2 = \alpha_5 = 0$.
\item $[x_4,[x_4, y]] = 0$, and hence $[x_4, y] = -\alpha_3 x_6 = 0$, so $\alpha_3 = 0$.
\item $[x_5,[x_5, y]] = 0$, and hence $[x_5, y] = -\alpha_1 x_6 = 0$, so $\alpha_1 = 0$.
\item $[x_2,[x_2, y]] = 0$, and hence $[x_2, y] = \alpha_4 x_5 = 0$, so $\alpha_4 = 0$.
\end{itemize}
Hence $y = \alpha_6 x_6 \in Z(\mathfrak{g})$, and so $B(\mathfrak{g}) = Z(\mathfrak{g})$. Hence all faithful prime ideals $P\lhd KG$ are controlled by $Z(G)$ by Theorem \ref{thm: faithful primes are controlled by B}.
\end{enumerate}
\end{ex}

\noindent Unfortunately, we do not know any examples of groups $G$ for which the $A(G)$-method succeeds using these refinements, but the $B(G)$-method fails. In fact we conjecture that there are none, i.e. for any group $G$ where the condition of Proposition \ref{propn: new examples} is satisfied, we have $B(G)=Z(G)$.

\bibliography{biblio}
\bibliographystyle{plain}

\end{document}